\title{Boundary rigidity of $3$D \cato\ cube complexes}
\author{
John Haslegrave\footnote{Mathematical Institute, University of Oxford,
Oxford OX2\thinspace6GG, United Kingdom.}~\footnote{Research supported by ERC Horizon 2020 grant 883810.} \quad \;
Alex Scott\protect\footnotemark[1]~\footnote{Research supported by EPSRC grant EP/X013642/1.} \quad\;
Youri Tamitegama\protect\footnotemark[1] \quad\;
Jane Tan\protect\footnotemark[1]}
\date{}
\newtheorem*{rep@theorem}{\rep@title}
\newcommand{\newreptheorem}[2]{%
\newenvironment{rep#1}[1]{%
 \def\rep@title{#2 \ref{##1}}%
 \begin{rep@theorem}}%
 {\end{rep@theorem}}}
\newtheorem{theorem}{Theorem}
\newtheorem{conjecture}[theorem]{Conjecture}
\newtheorem{corollary}[theorem]{Corollary}
\newtheorem{claim}{Claim}
\newtheorem*{claim*}{Claim}
\newtheorem{lemma}[theorem]{Lemma}
\newtheorem{fact}{Fact}
\newtheorem{proposition}[theorem]{Proposition}
\newtheorem{question}{Question}
\theoremstyle{definition}
\newtheorem{definition}{Definition}
\newtheorem*{definition*}{Definition}
\newenvironment{poc}{\begin{proof}}{\end{proof}}
\numberwithin{equation}{section}
\newcommand{\defn}[1]{\textcolor{Maroon}{\emph{#1}}}
\newcommand{\floor}[1]{\left\lfloor #1 \right\rfloor}
\newcommand{\R}{\mathbb{R}}
\newcommand{\thickening}{\mathbb{X}}
\newcommand{\thick}[1]{\mathbb{#1}}
\renewcommand{\leq}{\leqslant} 
\renewcommand{\geq}{\geqslant}
\DeclareMathOperator\sides{sides}
\newcommand{\cato}{{CAT}(0)}
\newcommand{\lk}{\operatorname{link}}
\newcommand{\interior}{\operatorname{int}}
\newcommand{\boundary}{\partial}
\newcommand{\eps}{\varepsilon}
\newcommand\blfootnote[1]{
  \begingroup
  \renewcommand\thefootnote{}\footnote{#1}
  \addtocounter{footnote}{-1}
  \endgroup
}
\begin{document}
\maketitle
\begin{abstract}
The boundary rigidity problem is a classical question from Riemannian geometry: if $(M, g)$ is a Riemannian manifold with smooth boundary, is the geometry of $M$ determined up to isometry by the metric $d_g$ induced on the boundary $\partial M$?
In this paper, we consider a discrete version of this problem: can we determine the combinatorial type of a finite cube complex from its boundary distances?
As in the continuous case, reconstruction is not possible in general, but one expects a positive answer under suitable contractibility and non-positive curvature conditions.
Indeed, in two dimensions Haslegrave gave a positive answer to this question when the complex is a finite quadrangulation of the disc with no internal vertices of degree less than $4$.
We prove a $3$-dimensional generalisation of this result: the combinatorial type of a finite \cato\ cube complex with an embedding in $\R^3$ can be reconstructed from its boundary distances.
Additionally, we prove a direct strengthening of Haslegrave's result: the combinatorial type of any finite 2-dimensional \cato\ cube complex can be reconstructed from its boundary distances.

\blfootnote{Email: \texttt{\{\href{mailto:haslegrave@maths.ox.ac.uk}{haslegrave},\href{mailto:scott@maths.ox.ac.uk}{scott},\href{mailto:tamitegama@maths.ox.ac.uk}{tamitegama},\href{mailto:jane.tan@maths.ox.ac.uk}{jane.tan}\}@maths.ox.ac.uk}}
\end{abstract}

\section{Introduction}
The reconstruction of higher-dimensional structures from lower-dimensional information has been an important area of research for many years.  For example, the question of whether a Riemannian manifold with boundary is determined by its spectrum was popularized in a famous article of Mark Kac \cite{drum}; and there is a huge body of research on reconstructing discrete objects from their projections \cite{tomography}.

A particularly natural question of this type is whether the internal  structure of an object can be determined from distances between boundary points.  In Riemannian geometry,
the notion of reconstruction from a distance function on the boundary of a geometric object is well-established in the realm of boundary rigidity questions. Broadly, a Riemannian manifold $(M,g)$ is said to be \defn{boundary rigid} if its associated metric $d_g$ (which is defined on any two points, including the interior) is determined up to isometry by its boundary distance function given by the restriction $d_g|_{\boundary M\times \boundary M}$. In 1981, Michel~\cite{michel81} conjectured that every simple
compact Riemannian manifold with boundary is boundary rigid. The 2-dimensional case was verified by Pestov and Uhlmann~\cite{PU05}. In higher dimensions, however, the conjecture is wide open and has only been verified for a few classes~\cite{BCG95, BI10}.

There has been less work on analogous questions for discrete structures.  
Haslegrave, answering a question of Benjamini~\cite{benjamini}, proved the following result in two dimensions.
\begin{theorem}[Haslegrave \cite{john2D}]\label{thm:disk}
Let $Q$ be a planar quadrangulation with a simple closed boundary such that all internal vertices have degree at least $4$. Then the distances between boundary vertices determine $Q$ up to isomorphism.
\end{theorem}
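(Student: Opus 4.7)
The plan is to proceed by induction on the number of squares of $Q$, at each step identifying and peeling off a \emph{convex corner}: a boundary vertex $v_i$ of degree $2$ in $Q$, whose two incident boundary edges bound a common square. The base case (a single square) is immediate from the four pairwise boundary distances.

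Before the induction, I would first recover the cyclic ordering of the boundary from the distance data. The boundary vertices are those supplied with distances, and the boundary edges lie among the pairs at mutual distance $1$. Since $Q$ is bipartite, all distances respect the alternating $2$-colouring of the boundary cycle, and using this structure together with the fact that the boundary is a simple closed curve, the boundary cycle can be read off from the distance matrix (one must argue that spurious chords between boundary vertices are distinguishable from true boundary edges). Existence of a convex corner then follows from discrete Gauss-Bonnet on the disc: the total angular defect is $2\pi$, each interior vertex of degree $\geq 4$ contributes non-positive defect (each incident square subtending angle $\pi/2$), so the boundary must supply all of the positive curvature, forcing at least four vertices of degree $2$.

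The core of the inductive step has two sub-problems: (a) identifying a convex corner $v_i$ from boundary distances alone, and (b) computing the boundary distances of the peeled complex $Q'$ obtained by removing the square $s = (v_{i-1}, v_i, v_{i+1}, v')$. For (a), a convex corner is characterised by the existence of a vertex $v'$ with $d(v_{i-1}, v') = d(v_{i+1}, v') = 1$ and $d(v_i, v') = 2$; I expect to detect this from the distance data via parity constraints, distinguishing whether $v'$ is itself a boundary vertex or an interior one. For (b), the new boundary replaces $v_i$ by $v'$ (or simply deletes $v_i$ if $v'$ was already on $\boundary Q$), and the relations $v' \sim v_{i \pm 1}$ constrain each $d_{Q'}(v', v_j)$ to lie within $1$ of $d_Q(v_{i\pm 1}, v_j)$; the correct value should be pinned down by bipartite parity together with the median property of \cato\ cube complexes.

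The step I expect to be the main obstacle is verifying that $Q'$ inherits the hypotheses of the theorem (a simple planar quadrangulation with internal degrees $\geq 4$) and computing the boundary distances to $v'$ in the degenerate case when $v'$ is itself originally a boundary vertex of $Q$. In that case, peeling effectively cuts $Q$ along a chord into two smaller sub-quadrangulations; the induction must then be applied to each piece, which requires showing that the boundary distances of each piece are themselves determined by the original data restricted to the appropriate subset of boundary vertices.
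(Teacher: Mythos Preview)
Your overall strategy---induct by peeling a degree-$2$ boundary vertex, with existence supplied by an Euler/Gauss--Bonnet count---is exactly the approach taken in the paper (and in Haslegrave's original). The paper does not prove this theorem directly but proves a generalisation (Theorem~\ref{thm:dim12}) whose $2$-dimensional case specialises to it; the mechanism there is the same corner-peeling induction.

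The genuine gap is your recognition step~(a). Your proposed characterisation of a convex corner---existence of $v'$ with $d(v_{i-1},v')=d(v_{i+1},v')=1$ and $d(v_i,v')=2$---is not usable from boundary data: if $v'$ is interior you have no access to any distances involving it, and even if you did, the existence of such a $v'$ does not force $\deg_Q(v_i)=2$, since $v_i$ may have further interior neighbours while still lying on the square $v_{i-1}v_iv_{i+1}v'$. The paper's fix (see the proof sketch of Theorem~\ref{thm:dim12}) is to use hyperplanes: $v_i$ has degree $2$ in $Q$ if and only if the two hyperplanes $H_e,H_f$ dual to its boundary edges cross, and this crossing is detectable purely from boundary distances because each hyperplane bipartitions $\partial Q^0$ (via Lemma~\ref{lem:adjacentverticesandhyperplanes}), and crossing is equivalent to neither bipartition refining the other. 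The same hyperplane lemma also makes your step~(b) precise: $d_{Q'}(v',v_j)=d_Q(v_{i-1},v_j)\pm 1$, with the sign read off from which side of the relevant hyperplane $v_j$ lies on, rather than the vaguer ``parity plus median'' you invoke. Your identification of the degenerate case ($v'$ already on the boundary) as the delicate point is correct; the paper handles the analogous issue by first removing cut-vertices, which is what that degeneracy produces.
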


Here, the distances are taken to be in the graph metric and the condition on the boundary of $Q$ can be restated by saying that $Q$ is (isomorphic to) a planar quadrangulation of a disc. Moreover, the degree condition is necessary for reconstruction.

Theorem~\ref{thm:disk} can be viewed as a discrete analogue of the 2-dimensional boundary rigidity result of  Pestov and Uhlmann~\cite{PU05}. The discrete case should be more approachable than the continuous one in general. This can be seen for instance in the fact that much stronger restrictions on the boundary are required in the latter.

In this paper, we look at generalising Theorem~\ref{thm:disk} to higher dimensions where the natural counterpart for a quadrangulation is a cube complex -- just as a quadrangulation can be formed by gluing Euclidean squares (or 2-dimensional cubes) along edges, a $k$-dimensional cube complex is informally a complex formed by gluing together cubes of dimension at most $k$ along subcubes. This leads to the following question.

\begin{question}\label{qu:main}
Under what conditions is a finite $k$-dimensional cube complex $X\hookrightarrow \R^k$ reconstructible up to combinatorial type from its boundary distances?
\end{question}

Question~\ref{qu:main} requires not only determining the full $1$-dimensional structure from boundary distances alone, but also reconstructing the higher-dimensional structure from the $1$-dimensional. This is not really true of Theorem~\ref{thm:disk}: while the goal is to reconstruct a $2$-dimensional complex, any polyhedral graph has a unique embedding in the sphere~\cite{whitney}, and hence a unique embedding in the plane with a designated outer face, so the second step is immediate in this case.

We provide an answer to Question~\ref{qu:main} for $3$-dimensional complexes, with a well-studied condition which directly generalises the one stated in Theorem~\ref{thm:disk}.
We require complexes to satisfy the \cato\ property, which entails both a global topological condition (simply connectedness) and a local negative curvature condition (Gromov's link condition, which states that the link of every vertex is a flag complex). 
Analogously to \cato\ spaces, \cato\ cube complexes form a large, popular class of complexes possessing useful convexity properties (see Section~\ref{sec:toolbox}).
This makes them a natural choice of setting for boundary rigidity problems.
Our main theorem is the following.

\begin{theorem}\label{thm:main}
Suppose that $X$ is a finite \cato\ cube complex admitting an embedding in $\R^3$, with a labelling of vertices in $\partial X$.
Let $D$ be the matrix of pairwise distances between vertices of $\partial X$ with respect to the graph metric on the $1$-skeleton of $X$.
Then the combinatorial type of $X$ is reconstructible from $D$.
\end{theorem}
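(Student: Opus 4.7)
My plan would be to exploit the hyperplane structure of \cato\ cube complexes together with the paper's other main result (the $2$-dimensional \cato\ reconstruction theorem). Recall that a hyperplane $H$ in $X$ is a codimension-$1$ subcomplex dual to an equivalence class of edges under the relation ``opposite sides of a common square'', extended transitively. Such an $H$ is itself a \cato\ cube complex of one dimension less and separates $X$ into two convex halfspaces; moreover, the combinatorial type of $X$ is determined by the system of its hyperplanes together with their mutual intersection pattern. A basic fact is that in a \cato\ cube complex the combinatorial distance between two vertices equals the number of hyperplanes separating them, so $D$ already encodes hyperplane-separation data at the level of boundary vertices.

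\textbf{Concrete steps.} First, I would reconstruct $\partial X$ as a square-tiled topological $2$-sphere. A pair of boundary vertices at distance $1$ in $D$ is joined by some edge of $X$, and one can separate edges lying on $\partial X$ from interior edges using local information together with the fact that the link of a vertex of $\partial X$ is a disc rather than a full sphere (since $X \hookrightarrow \R^3$). The face structure of $\partial X$ is then determined by Whitney's unique embedding theorem applied to the resulting $3$-connected planar graph. Next, from $\partial X$ and $D$ I would extract the set of hyperplanes of $X$ together with, for each $H$, the trace $\partial H \subset \partial X$ (a simple closed curve of edge-midpoints) and the induced distance data on the ``boundary vertices'' of $H$ (midpoints of boundary edges dual to $H$). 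Applying the paper's $2$-dimensional \cato\ reconstruction theorem to each $H$ would then recover the combinatorial type of every hyperplane; the pairwise intersection pattern of hyperplanes, visible in how their traces meet on $\partial X$, assembles $X$ up to combinatorial type.

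\textbf{Main obstacle.} The hardest step will be the middle one: \emph{faithfully transferring boundary-distance data to each individual hyperplane} in a way that meets the hypothesis of the $2$D theorem. Ambient distances in $X$ between vertices near $H$ are not the same as intrinsic distances within $H$, and one must isolate those crossings of hyperplanes of $X$ which correspond to hyperplanes of $H$ viewed as a $2$D complex (i.e., those that meet $H$ transversely). Ensuring that the resulting matrix on the ``boundary vertices'' of $H$ arises as the distance matrix of a valid $2$-dimensional \cato\ cube complex, and then coherently reassembling the $2$D pieces into a $3$D complex respecting the $\R^3$ embedding, is where I would expect to concentrate the bulk of the technical effort.
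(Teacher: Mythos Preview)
Your proposal has a genuine gap at its very first step. You assume that $\partial X$ is a square-tiled $2$-sphere whose $1$-skeleton is a $3$-connected planar graph, so that Whitney's theorem applies. But the hypothesis of the theorem is only that $X$ is a finite \cato\ cube complex embeddable in $\R^3$; there is no assumption that $X$ is homeomorphic to $B^3$. In general $\partial X$ is \emph{not} a sphere: $X$ may contain free $2$-cells (squares not contained in any $3$-cube), cut-vertices, or vertices whose link is not a disc. For instance, two $3$-cubes sharing a single edge form a \cato\ complex in $\R^3$ whose boundary is not a manifold at all. The paper confronts exactly this difficulty and resolves it by an auxiliary ``thickening'' construction that embeds $X$ in a larger pure complex $\thick{X}$ which \emph{is} homeomorphic to a ball; the Euler-characteristic counting you would need is carried out on $\partial\thick{X}$, not on $\partial X$. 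Relatedly, your claim that the link of each boundary vertex is a disc is false in the same examples, so your proposed method for distinguishing boundary edges from interior edges among distance-$1$ pairs does not go through.

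Even granting a ball, the second step is more delicate than you indicate. The distance matrix $D$ gives ambient distances between \emph{vertices} of $\partial X$, whereas the $2$-dimensional theorem you want to invoke on a hyperplane $H$ requires the intrinsic distance matrix on the vertices of the combinatorial boundary of $H$ (midpoints of certain edges). You would need to show that these intrinsic distances can be read off from $D$, which amounts to identifying, for each pair of boundary edges dual to $H$, endpoints lying on the same side of $H$ and proving that the corresponding ambient distance equals the intrinsic one; convexity of $N(H)$ is the right tool, but this has to be argued. The paper avoids all of this by a direct induction: it repeatedly recognises and peels off small boundary features (cut-vertices, corners of faces, degree-$3$ vertices not in a cube, and finally a ``row of cubes'' on the boundary), checking at each stage that the reduced complex is still \cato, that its new boundary distances are computable from $D$, and that at least one such feature always exists. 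No appeal to the $2$-dimensional theorem is made.
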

In fact, we preserve the labelling of boundary vertices when reconstructing the combinatorial type.

Both the simply connectedness and flag conditions are used essentially in numerous places throughout the proof. It is also possible to see directly that they cannot be omitted from the statement. For instance, if we do not require links to be flag, one could `hide' a cube inside another, as in Figure~\ref{fig:hiddencube}: geodesics between vertices on the outer cube are unaffected by the presence of the inner cube.
This is a $3$-dimensional analogue of `hiding' a square within another in a quadrangulation of the disc.
Another example is given by taking a $3\times3\times3$ block of cubes and considering the cube complexes formed by removing the top two cubes in the centre column and by removing the top and bottom cubes in the centre column (depicted in Figure~\ref{fig:needflag}). These two cube complexes do not satisfy the flag condition at any vertex of the (possibly missing) middle cube, and it is easily seen that they have the same boundary distances since all vertices are on the boundary and both complexes have the same edges. Similarly, we can see that contractibility, which implies simply connectedness, is necessary as it would be impossible to differentiate between a single square with or without a face only from the boundary distances.

 \begin{figure}[t]
    \centering
    \includegraphics[scale=0.4]{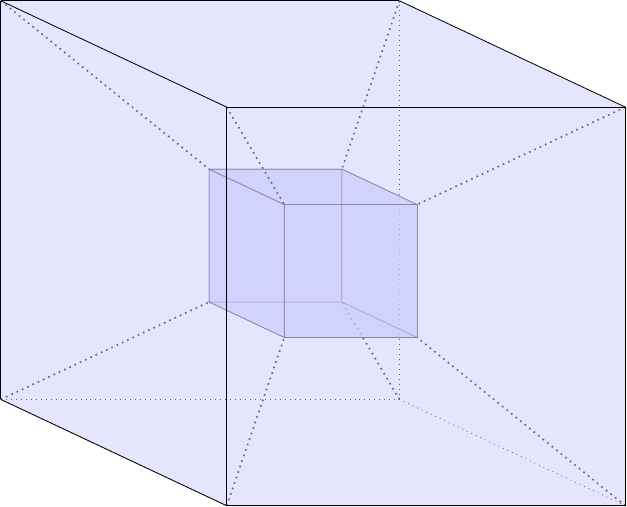}
    \caption{A cube hidden within another. Links of vertices of the `hidden' cube are not flag: four cubes meet at each such vertex, but the complex has no cells of dimension $4$.}
    \label{fig:hiddencube}
\end{figure}
 \begin{figure}[t]
    \centering
    \includegraphics[scale=0.4]{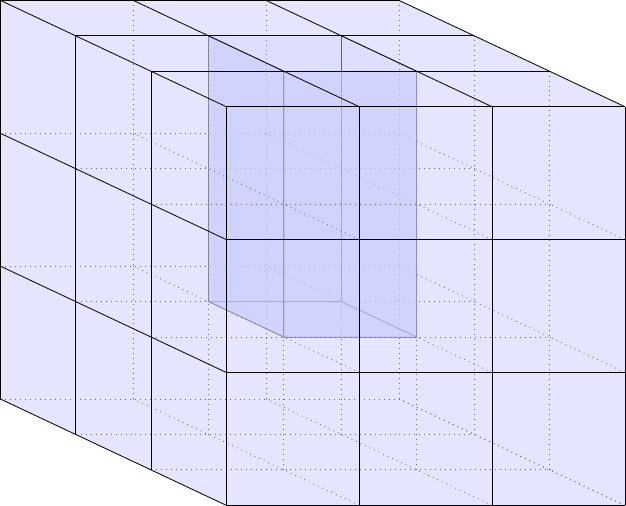}\hspace{1.6cm}
    \includegraphics[scale=0.4]{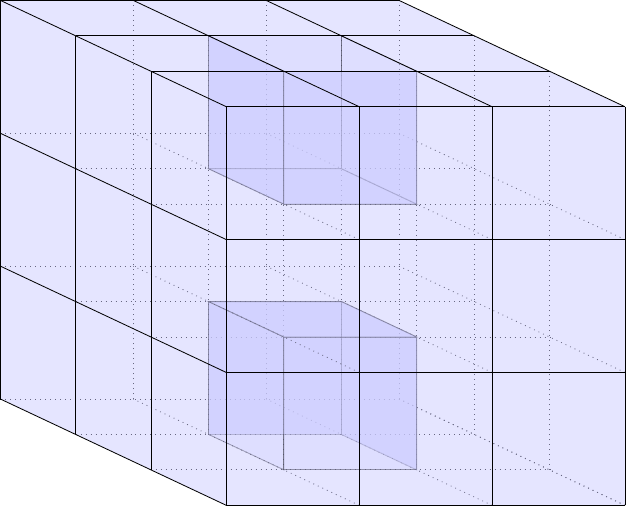}
    \caption{Two pure cube complexes with the same boundary distances. The dark shaded cubes are removed.}
    \label{fig:needflag}
\end{figure}

 \cato\ cube complexes are ubiquitous in modern geometric group theory. While the importance of the condition in the context of reconstruction may not be immediately clear, it does in fact directly generalise earlier conditions. To see this, we note that Gromov's link condition reduces to the degree condition of Theorem~\ref{thm:disk} for cube complexes of top dimension at most $2$, and local negative curvature is also one of the key assumptions used by Besson, Courtois and Gallot in the continuous setting~\cite{BCG95}. Furthermore, the fact that $2$-dimensional cube complexes in Theorem~\ref{thm:disk} are contractible is also captured in the \cato\ condition, which follows from the Cartan--Hadamard theorem. In fact \cato\ complexes have the even stronger property of collapsibility (see~\cite{adiprasito2019collapsibility}).

Question~\ref{qu:main} asks whether one can recover combinatorial information of a cube complex $X$ from some partial combinatorial information, namely the distance (in the graph of the entire complex) between any two vertices on the boundary.
In order for this question to be well-defined, we first need to make precise the notion of \emph{boundary}. 
In the present paper we mostly work with the natural notion of \defn{geometric boundary}: given an embedding $X\hookrightarrow \R^k$, we define $\partial X$ to be the topological (induced from the Euclidean metric) boundary of $X$. 
This notion implicitly depends on the dimension $k$ of the space that we embed $X$ into.
Indeed, if $X$ has no cells of dimension at least $k$ then it is its own boundary in any embedding $X\hookrightarrow \R^k$, so reconstructing $X$ from boundary information is trivial.
Hence, we always consider $k$-dimensional cube complexes embedded in $k$-dimensional Euclidean space.

A convenient observation is that for finite cube complexes the geometric boundary is independent of the embedding we choose, so long as one exists. With this in mind, one can define a combinatorial notion of boundary for cube complexes. This is discussed in more detail in Section~\ref{sec:defns} and this notion of boundary will be used in Section~\ref{sec:dim12}.

Since this combinatorial notion of boundary does not require an embedding, it suggests a natural generalisation of Question~\ref{qu:main}: can we reconstruct \cato\ cube complexes which do not necessarily admit embeddings in a euclidean space of their top dimension?
We give a positive answer for cube complexes of top dimension at most $2$, providing in particular a strengthening of Theorem~\ref{thm:disk}.
\begin{theorem}\label{thm:dim12}
Let $X$ be a \cato\ cube complex of top dimension at most $2$ with finitely many cells and $D$ its matrix of pairwise distances between vertices on the combinatorial boundary of $X$.
Then, the combinatorial type of $X$ is reconstructible from $D$.
\end{theorem}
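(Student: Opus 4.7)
The plan is to prove \Cref{thm:dim12} by induction on the number of 2-cells of $X$. In the base case, $X$ has no 2-cells and is hence a finite tree; every edge is then a combinatorial boundary edge and every vertex a boundary vertex, so $D$ is the full distance matrix, from which the tree is reconstructible (e.g.\ via the four-point condition).

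For the inductive step, suppose $X$ has at least one 2-cell. Since $X$ is finite and simply connected, some 2-cell meets the combinatorial boundary and thus supplies a \emph{free edge}, namely an edge $e$ contained in exactly one 2-cell $s$ (and in particular $e\subseteq\partial X$). The strategy is to identify such a pair $(e,s)$ from $D$, remove $s$ and $e$ to obtain a smaller \cato\ cube complex $X'$, derive its boundary distance matrix $D'$ from $D$, and apply the inductive hypothesis; reattaching $s$ and $e$ at the recovered location then yields $X$.

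The identification is the technical core. First, reconstruct the boundary $1$-skeleton: two boundary vertices $u,v$ are joined by an edge of $X$ iff $D_{uv}=1$. For each candidate boundary edge $e=uv$, the key sub-step is to determine, from the distances of $u$ and $v$ to the other boundary vertices, the number of 2-cells of $X$ containing $e$. Here one exploits the $2$-dimensional flag link condition (each vertex link is a triangle-free graph) together with convexity of hyperplanes in \cato\ cube complexes, which constrains how geodesics interact with the local structure at $e$. Once $e$ is shown to lie in a unique 2-cell $s$, the opposite edge of $s$ is located by its distance profile. Removing $s$ and $e$ yields $X'$; standard arguments show that $X'$ is again a finite \cato\ cube complex of top dimension at most $2$ (simply connectedness survives because we remove an ear along a boundary arc, and triangle-freeness of links is preserved since we only delete link vertices and edges from them), and that $D'$ is computable from $D$ by rerouting any boundary-to-boundary geodesic of $X$ around the removed cell.

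The principal obstacle is this identification step. In the planar setting of \Cref{thm:disk}, every interior edge lies in exactly two 2-cells, so a free edge can essentially be read off from local degree information at the boundary. In the non-embedded setting of \Cref{thm:dim12}, by contrast, an edge may lie in arbitrarily many 2-cells (as in a ``book'' of squares glued along a common spine), so we must work harder to rule out the multi-cell case using only boundary distances. Handling this genuinely new phenomenon is, I expect, the main technical content of the proof.
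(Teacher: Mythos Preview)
Your overall plan (induction by peeling off a boundary feature, checking the reduced complex is again \cato, and recomputing its boundary matrix) matches the paper's, but both the feature you peel and the recognition step differ, and the latter is where your proposal has a genuine gap.

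First, a minor error: in your base case, the combinatorial boundary of a $1$-dimensional complex is not the whole vertex set but only the leaves (a $0$-cell lies on the boundary iff it is in at most one $1$-cell). Reconstruction of a finite tree from its leaf-to-leaf distances is still classical, so the base case is salvageable, but your stated reason is wrong.

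More importantly, you correctly isolate the heart of the matter---deciding from $D$ how many $2$-cells contain a given boundary edge---and then do not do it. Saying ``one exploits the flag link condition together with convexity of hyperplanes'' is a description of the toolbox, not a method; without a concrete criterion here there is no proof. Your existence line (``some $2$-cell meets the boundary and thus supplies a free edge'') is also too quick: a $2$-cell can meet $\partial X$ only in vertices, and one actually needs an argument (e.g.\ that a hyperplane in a $2$-dimensional \cato\ complex is a finite tree, hence has a leaf, which corresponds to an edge lying in exactly one face).

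The paper takes a related but cleaner route. After removing degree-$\leq 1$ vertices and cut vertices, it looks not for a free \emph{edge} but for a degree-$2$ \emph{vertex} $v$ lying in a (necessarily unique) face---a corner of a face. Existence comes from an inclusion-maximal disc diagram $D\hookrightarrow X$ together with the disc-quadrangulation lemma of~\cite{john2D}, which produces a boundary vertex of $D$ of degree $2$; maximality and the absence of cut vertices force this vertex to have degree $2$ in $X$ as well. Recognition is then a concrete hyperplane criterion: if $e,f$ are the two boundary edges at $v$, then $\deg_X(v)=2$ iff the hyperplanes $H_e,H_f$ cross, and this is detectable from $D$ because each hyperplane induces a bipartition of $\partial X^0$ (via Lemma~\ref{lem:adjacentverticesandhyperplanes}) and crossing is equivalent to the two bipartitions not being nested. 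Removing $v$ rather than an edge also makes distance recovery essentially free: any geodesic through $v$ reroutes through the opposite vertex $u$ of the face with no change in length, so old boundary distances are preserved, and distances to the new boundary vertex $u$ follow from the hyperplane lemma.

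Your free-edge approach could probably be made to work, but you would have to supply (i) an honest existence argument, (ii) a concrete recognisability criterion for free edges---this is the missing idea---and (iii) a more careful distance-recovery step, since removing an edge whose endpoints have arbitrary degree does not preserve old boundary distances in general. The paper's choice of corners of faces over free edges is exactly what makes (ii) and (iii) tractable.
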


This work makes significant use of notions and tools from algebraic topology and from the theory of \cato\ cube complexes.
Since there is a good deal of terminology involved, we postpone technical discussions and first introduce notation and necessary theory in Section~\ref{sec:defns}. 
With this background in hand, we give a skeletal version of the proof of Theorem~\ref{thm:main} in Section~\ref{sec:outline}. 
This provides a roadmap for Sections~\ref{sec:cleaning} through \ref{sec:rowsonboundary}, which are devoted to the different aspects of the main proof, while Section~\ref{sec:toolbox} collects and proves basic technical lemmas needed to make our arguments rigorous. The reader may wish to skip this section at first and use it as a reference. Finally, while this paper lays the groundwork for $k$-dimensional generalisations ($k\geq 4$), there are non-trivial complications that arise. We briefly discuss this together with other open questions in Section~\ref{sec:conclusion}.
In Section~\ref{sec:dim12} we provide a brief sketch of the proof of Theorem~\ref{thm:dim12}.

\section{Definitions}
\label{sec:defns}

In this section, we define the key objects and terminology that we will be working with. 
This will then allow us to give a broad outline of the proof of \cref{thm:main}.

We start with some standard topological notions. 
For simplicity of exposition, we will restrict some definitions to the cases that we require, although they may exist in much greater generality. 
We refer to \cite{hatcher} for a detailed account of the concepts from point-set and algebraic topology and the basic definition of CW complexes, and \cite{wise} for specifics on cube complexes. 
All CW complexes we consider are \defn{regular}, meaning that their gluing maps are homeomorphisms.
Our notation and descriptions below are chosen to reflect the fact that we will require a mix of combinatorial and geometric properties of the objects in question. 

We write $I$ for the unit interval $[0,1]$. Let $S^k$ and $B^k$ be the unit sphere and ball in $\R^k$ respectively, i.e.
\begin{align*}
S^k &= \{(x_1,x_2,\dotsc,x_k)\in \R^k: x_1^2+\dotsc +x_k^2=1\},\\
B^k &= \{(x_1,x_2,\dotsc,x_k)\in \R^k: x_1^2+\dotsc +x_k^2\leq 1\}.
\end{align*}

\subsection{Geometric boundary}
Let $X$ be a regular CW complex whose gluing maps are isometries.
Given an embedding $X\hookrightarrow \R^n$, the \defn{(geometric) boundary} of $X$, denoted $\boundary{X}$, is the set of points in $X$ for which every neighbourhood intersects both $X$ and $\R^n\setminus X$.
The \defn{interior} of $X$ is then $\interior{X}\coloneqq X\setminus\boundary{X}$. 
Explicitly, this is the set of points $p\in X$ such that the ball $B_\eps(p)$ is contained in $X$ for some sufficiently small $\eps>0$, where $B_\varepsilon(p)$ denotes the \defn{ball of radius $\varepsilon$ centered at $p$.}

If finiteness of the complex is not assumed, this notion of boundary may depend on the chosen embedding.
For example, consider the embedding of a $2$-dimensional complex $Z$ in $D\subseteq \mathbb{C}$ obtained by gluing $2$-cells to the sectors bounded by consecutive vectors (viewed as edges) from $\{ e^{i\pi\sum_{j=0}^k 2^{-j}} \colon k\in \mathbb{N} \}$.
The geometric boundary from this embedding is the preimage of the unit circle, so in particular does not contain the (preimage of the) edge $e^{i\pi}$.
Yet, there are ways of embedding $Z$ in $\mathbb{C}$ where the preimage of this edge is on the geometric boundary: e.g. by gluing $2$-cells to the sectors bounded by consecutive vectors from $\{ e^{i\pi\sum_{j=1}^k 2^{-j}} \colon k\in \mathbb{N} \}$.

For a regular CW complex $X$ of maximum dimension $k$ whose gluing maps are isometries, we define its \defn{combinatorial boundary} to be the downward closure of the cells of dimension less than $k$ in at most one cell of dimension $k$. 
In general, the geometric and combinatorial notions of boundary are different even for complexes admitting embeddings in $\R^k$, as the combinatorial boundary is independent of any embedding.
For finite complexes however, these two notions coincide, so long as there exists an embedding in $\mathbb{R}^k$. 

\begin{proposition}
Let $X$ be a regular $k$-dimensional CW complex whose gluing maps are isometries and which admits an embedding $X\hookrightarrow \R^k$.
If $X$ has a finite number of cells, then its combinatorial and geometric boundaries are the same.
\end{proposition}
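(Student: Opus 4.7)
My plan is to establish the equality $\boundary X = X_c^\boundary$ by comparing the two boundaries cell by cell. The combinatorial boundary $X_c^\boundary$ is a subcomplex by construction. The geometric boundary $\boundary X$ is also a subcomplex, because the local combinatorial structure of $X$ near a point $p$ is determined by the open cell $\interior(\tau)$ containing $p$ (via the star of $\tau$), so the geometric boundary status is constant on each such open cell. It therefore suffices to check, for each open cell $\interior(\tau)$, that $\interior(\tau)\subseteq \boundary X$ if and only if $\tau \in X_c^\boundary$.

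For the inclusion $X_c^\boundary \subseteq \boundary X$, I would show that if $\sigma$ is any cell of dimension less than $k$ contained in at most one $k$-cell, then $\sigma \subseteq \boundary X$; downward closure then extends the inclusion. Fix $p \in \interior(\sigma)$. If $\sigma$ lies in no $k$-cell, then a neighborhood of $p$ in $X$ is a union of cells of dimension at most $k-1$, which as a subset of $\R^k$ has empty interior, so every Euclidean ball around $p$ meets $\R^k \setminus X$. If $\sigma$ is a proper face of a unique $k$-cell $C$, then $p$ lies on the topological boundary of $C$ in $\R^k$, and since $C$ is the only $k$-cell of $X$ near $p$ while lower-dimensional cells are $\R^k$-negligible, every Euclidean neighborhood of $p$ again contains points outside $X$. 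Either way $p \in \boundary X$, and by closedness of $\boundary X$ we get $\sigma \subseteq \boundary X$.

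For the reverse inclusion $\boundary X \subseteq X_c^\boundary$ I would argue by contrapositive. If $\tau \notin X_c^\boundary$, then either $\tau$ is itself a $k$-cell (in which case $\interior(\tau)$ is already open in $\R^k$) or every cell of dimension less than $k$ containing $\tau$ as a face is contained in at least two $k$-cells. The embedding $X \hookrightarrow \R^k$ implies any $(k-1)$-cell lies in at most two $k$-cells, which must sit on opposite sides of it, so each $(k-1)$-cell containing $\tau$ is in exactly two $k$-cells. Fixing $p \in \interior(\tau)$, this means the link $\lk(\tau)$, realized as the intersection of $X$ with a small sphere of directions around $p$ normal to $\tau$, is a closed subpolyhedron of $S^{k-\dim\tau-1}$ of matching dimension in which every codimension-one face is shared between exactly two top-dimensional faces. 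Invariance of domain (or an induction on $k-\dim\tau$ starting from the elementary case $\dim\tau = k-1$) then forces $\lk(\tau)$ to fill the whole sphere, so the $k$-cells containing $\tau$ cover a full Euclidean neighborhood of $p$, placing $p$ in the geometric interior.

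The main obstacle is rigorously justifying this final link step: one must show that a closed embedded subpolyhedron of $S^{k-\dim\tau-1}$ of the same dimension, in which every codimension-one face borders exactly two top-dimensional faces, must coincide with the whole sphere. I would handle it by noting that each top-dimensional cell of the link is open in the sphere, and the pseudomanifold condition implies the union of such cells is also closed (its complement is the union of open cells lying in free codimension-one faces, of which there are none); by connectedness considerations and finiteness this union exhausts $S^{k-\dim\tau-1}$. The forward inclusion and the reduction to cell-by-cell comparison are essentially routine once the structural observations are in place.
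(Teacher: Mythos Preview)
Your proposal is sound overall, and the inclusion $X_c^\boundary \subseteq \boundary X$ matches the paper's argument. For the reverse inclusion you take a different route: you analyse the link of $\tau$ inside a small normal sphere and argue that the pseudomanifold condition forces the link to fill that sphere, proceeding by induction on $k-\dim\tau$. The paper instead gives a one-shot connectedness argument: for $x$ in a cell $S$ of dimension at most $k-2$ not in the combinatorial boundary, take a small ball around $x$, remove the $(k-2)$-skeleton (the ball remains path-connected, since what is removed has codimension at least two), and observe that any path from a point of $\interior X$ to a point of $\R^k\setminus X$ inside this ball must meet $\boundary X$ at a point lying in an open $k$-cell or an open $(k-1)$-cell --- neither of which can be on $\boundary X$ under the hypothesis. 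This disposes of all codimensions simultaneously once the codimension-one case is in hand, with no induction or link machinery; your approach is more structural and would generalise more readily to settings where one already has good control on links.

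One caution on your final paragraph: the direct (non-inductive) justification you sketch for why the link fills the sphere does not work as written. The union $U$ of open top-dimensional link cells is \emph{not} closed in $S^{k-\dim\tau-1}$; its frontier is the entire lower-dimensional skeleton of the link, not merely the free codimension-one faces, so ``its complement is the union of open cells lying in free codimension-one faces'' is false. Your inductive alternative is correct and suffices. Alternatively, the paper's trick transplants cleanly to the link: remove the codimension-two skeleton of the link from the sphere; what remains is still connected, and the link minus that skeleton is both open (by the codimension-one case) and relatively closed there, hence everything.
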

\begin{proof}
Let $X$ be a finite regular CW complex of dimension $k$ and fix an embedding in $\R^k$.
Suppose that a cell $S$ of dimension less than $k$ is contained in at most one cell of dimension $k$. Suppose that a point $x$ inside $S$ (that is, in $S$ but not in any lower dimension cell) is not in the geometric boundary. Since the complex is finite, there is some minimum distance between $x$ and the union of all $k$-cells not containing $S$. Take a ball of radius smaller than this. Now any point in the ball that does not intersect the $(k-1)$-skeleton must be in some $k$-cell. Furthermore, there must be at least two such cells involved, since if there is a unique such cell $T$ then the whole ball is in (the closure of) $T$, and so, since gluing maps are isometries, $x$ is in the interior of $T$, contradicting the choice of $x$. So all of the interior of $S$ is on the geometric boundary. Since the geometric boundary is closed, it also contains all cells of $S$, so the geometric boundary contains the combinatorial boundary.

Now consider a point $x$ that lies inside the combinatorial boundary.
Suppose $S$ is a $(k-1)$-cell contained in two $k$-cells, and let $x$ be a point in the interior of $S$. Then a sufficiently small ball around $x$ meets no cells other than these three. 
By passing to a smaller ball if necessary we can assume that it is divided into two parts by $S$, either of which contains interior points of the larger cells.
Thus each part of the ball is contained in one of the two cells, and $x$ is not on the geometric boundary.

Suppose $x\in S$ is in the geometric boundary, where $S$ is a cell of dimension at most $k-2$ and is the inclusion minimal cell containing $x$. If $S$ is not in the combinatorial boundary, then every $(k-1)$-cell containing $S$, of which there is at least one, is in two $k$-cells. Take a ball around $x$ that is sufficiently small to avoid any $(k-1)$-cell not containing $S$. This ball contains a point $y$ in the interior of some $k$-cell (since $x$ lies in the closure of such a cell), and a point $z$ outside the complex (since $x$ is in the geometric boundary). Now the ball is path-connected, even if we remove the $(k-2)$-skeleton from it. Thus there is a path from $y$ to $z$, which avoids the $(k-2)$-skeleton and must contain a point on the geometric boundary. By the previous paragraph, no point in the interior of a $(k-1)$-cell containing $S$ is in the geometric boundary, a contradiction.
\end{proof}
In particular, this result applies to the cube complexes considered throughout this paper.
Henceforth, all CW complexes are assumed to have a finite number of cells.

\subsection{Simplicial complexes}
An \defn{$n$-simplex} is an $n$-dimensional object formed by taking the convex hull of $n$ linearly independent vectors. 
Every $n$-simplex is homeomorphic to a \defn{standard $n$-simplex} $\Delta^n := \{(x_0,\ldots,x_n) \subseteq \R^{n+1} : \sum_i x_i=1 \text{ and }x_i\geq 0\text{ for all }i\}$, which is spanned by the unit vectors along each coordinate axis.
We say that $x_0,\dotsc, x_n$ span the simplex $\Delta^n$.
Low-dimensional simplices are familiar objects: we will call $0$-simplices \defn{vertices}, 1-simplices \defn{edges}, 2-simplices \defn{triangles} and 3-simplices \defn{tetrahedra}. 

Recall that a \defn{simplicial complex} $S$ is a CW complex whose cells is a collection of simplices such that
\begin{itemize}[itemsep=-1mm]
    \item for every simplex in $S$, all of its simplicial faces are also in $S$, and
    \item the intersection of any two simplices in $S$ is a simplicial face of both of them.
\end{itemize}
The \defn{dimension} of a simplicial complex is the dimension of its top-dimensional simplices.
We say that a simplicial complex $S$ is \defn{flag} if whenever there is a collection of $k$ pairwise adjacent vertices (that is, joined by edges), then those $k$ vertices span a $(k-1)$-simplex in $S$. 
Informally, this means that there is a $k$-simplex everywhere there should be one according to the graph of vertices and edges in the complex. 
Finally, note that the boundary of an $n$-dimensional simplicial complex has a natural structure as an $(n-1)$-dimensional simplicial complex.

\subsection{Cube complexes}
We now turn to \defn{cube complexes}, which are CW complexes whose $n$-cells are $n$-cubes and gluing maps are combinatorial isometries.
Let the \defn{standard $n$-cube} be the set of points $Q^n\coloneqq I^n \subseteq \R^n$ (the standard 0-cube is a single point). 
Note that each $n$-cube is endowed with a natural internal coordinate system. 
By restricting any $k$ of the coordinates to 0 or 1, we obtain an $(n-k)$-cube on the boundary of our $n$-cube which we call a \defn{cubical face}. In general, an $n$-cube is any set in $\R^n$ that is homeomorphic to the standard $n$-cube. 
The \defn{dimension} of a cube complex is defined analogously to that for simplicial complexes. An $n$-dimensional cube complex is \defn{pure} if every $k$-cube with $k<n$ is contained in an $n$-cube. 
Since we mainly work with $3$-dimensional cube complexes, in the later sections of this paper we will use the terms \defn{vertices}, \defn{edges}, \defn{faces}, and \defn{cubes} (with no specified dimension) to mean $0$-cubes, $1$-cubes, $2$-cubes and $3$-cubes respectively. 
As such, we will refer to the $1$-skeleton $X^1$ as the \defn{graph} of $X$, with the corresponding graph theoretic terminology.
In particular, unless otherwise stated, a \defn{path} $\gamma$ in $X$ is a graph path in its graph $X^1$ and its \defn{length} $|\gamma|$ is its number of edges, and a \defn{walk} is a path where vertices may be repeated.
When $\gamma$ consists of vertices $v_0,\dotsc, v_k$, in this order, we sometimes use the notation $v_0\dotsb v_k$ for $\gamma$.

For $k,\ell\geq 0$ we say that an $\ell$-cube in a $k$-dimensional cube complex is \defn{free} if it is not contained in any $k$-cube, this being one possible structure in a cube complex that is not pure. 
We will mainly use this term to refer to free faces in $3$-dimensional cube complexes, i.e. $2$-cubes not contained in $3$-cubes.
A \defn{cubulation of the ball} is a cube complex that is homeomorphic to $B^3$. 
In Section~\ref{sec:thethickening} we will see a construction which requires fixing an embedding $X\hookrightarrow \R^3$; 
as such, when introducing a cube complex $X$ we use this notation to indicate that we have fixed a specific embedding of $X$.
Say that a vertex is the \defn{corner of a cube (face)} if all cells containing it are contained in a unique cube (face). 

In this paper we are interested in the \defn{graph metric} on the $1$-skeleton of cube complexes, meaning the length of shortest paths between vertices.
When it is unambiguous to do so we will talk about geodesics on cube complexes to mean geodesics on their $1$-skeleton.
As such, if $X$ is a cube complex, the \defn{$1$-distance graph induced by the boundary} $G(X)$ is the subgraph of $X^1$ induced by the vertices $\partial X^0$.
This contains $\partial X^1$ as a subgraph, which may be a proper subgraph since edges not in $\partial X$ can have endpoints on the boundary.
Hence, we can read $G(X)$ off the restriction $D_{\partial X}$ of the distance matrix to the boundary, but not necessarily $\partial X^1$.

A map $f\colon X\rightarrow Y$ between CW complexes $X$ and $Y$ is said to be \defn{combinatorial} if its restriction to the boundary of any cell of $X$ is injective, and if it maps each cell to a cell of same dimension.
Two cube complexes $X$ and $Y$ are said to have the same \defn{combinatorial type} if there are bijections $f_i\colon X^i\rightarrow Y^i$ for each dimension $i$ such that any two cells $\sigma$, $\tau$ of $X$ are incident if and only if $f(\sigma)$ is incident to $f(\tau)$ in $Y$.
In this paper, we will mostly be interested in cube complexes up to combinatorial type, meaning that we consider them to be \emph{distinct} when their combinatorial types differ.

There is an important construction which allows us to encode local structural information from a cube complex via an auxiliary simplicial complex. Given a cube complex $X$ and a vertex $v\in X$, the \defn{link} of $v$, denoted $\lk(v)$, is the simplicial complex where:
\begin{itemize}
    \item the vertices of $\lk(v)$ are in bijection with edges containing $v$,
    \item for $n\geq 2$, there is an $(n-1)$-simplex with vertices $e_1,\ldots, e_{n}$ in $\lk(v)$ if and only if there is an $n$-cell $C$ in $X$ containing $v$ where $e_1,\ldots, e_{n}$ are the edges of $C$ that contain $v$. 
\end{itemize}
Intuitively, the simplices in $\lk(v)$ correspond to `corners' of cells in $X$ that contain $v$. A useful alternative perspective, assuming that $X$ is finite and embedded in Euclidean space, is that $\lk(v)$ is the intersection of the sphere $S_\eps(v)$ with $X$ for sufficiently small $\eps>0$. This has a natural simplicial structure.

\subsection{\cato\ Cube complexes}
At last, we arrive at the key property that we need for reconstruction. A cube complex $X$ is \defn{\cato} if it is simply connected and $\lk(v)$ is flag for every $v\in X^0$. The latter part of this definition is really a condition requiring that the complex has nonpositive curvature, and in fact directly generalises the degree condition in Theorem~\ref{thm:disk}. 
In a disc quadrangulation, the link of each boundary vertex is a path, while the link of an internal vertex is a cycle. 
Thus a disc quadrangulation is \cato\ if and only if cycles in links have length at least $4$, i.e.\ if and only if each internal vertex has degree at least $4$. However, in three dimensions there is no corresponding equivalence: being \cato\ implies that every internal vertex has degree at least $6$, but the flag condition may fail at an internal vertex even if it has high degree, and it may also fail at a boundary vertex.
The structure of links of vertices in \cato\ cube complexes will be crucial for us.
We continue this discussion in Section~\ref{sec:links}.

While there is a great deal of rich theory surrounding \cato\ cube complexes -- especially concerning their applications in geometric group theory, we will only need basic combinatorial considerations for our purposes and will refrain from delving deeper in the existing theory. 
Two important objects of study will be \emph{immersed hyperplanes} and \emph{disc diagrams}.

\subsection{Hyperplanes and disc diagrams}
For $n\geq 1$, a \defn{midcube} in an $n$-cube $C=I^n$ is a codimension $1$ cube $M$ with an embedding $M= I^{n-1}\times \{1/2\}$ in $C$.
As such, $C$ has precisely $n$ distinct midcubes, and the intersection of each midcube with a face of $C$ of codimension at least $2$ is again a midcube of that face.
Moreover, two midcubes of distinct cubes in a cube complex $X$ intersect in a combinatorial manner, meaning that the natural gluing map between the midcubes is combinatorial.
In this way, midcubes of cubes of dimension at least $1$ form connected components which we refer to as \defn{hyperplanes}.
We emphasize the distinction between a hyperplane $H$ as a standalone cube complex and its natural embedding $H\rightarrow X$ as midcubes by referring to the latter as an \defn{immersed hyperplane}.
Then, the \defn{(cubical) neighbourhood} of $H\rightarrow X$, written $N(H)$ or $H\times I$ (sometimes called the \emph{carrier} of $H$), is the union of cubes in $X$ containing it.
We say that two immersed hyperplanes \defn{cross} (in a complex $X$) if they contain two midcubes of some cube of any dimension (in this complex $X$); when a hyperplane crosses itself we say that it \defn{self-crosses}.

A \defn{disc diagram} is a locally injective combinatorial map $D\rightarrow X$, where $D$ is a quadrangulation of the disc. 
We now define some pathological substructures in disc diagrams $D\rightarrow X$.
In this setting, hyperplanes are $1$-dimensional cube complexes, i.e. graphs.
A \defn{nonogon} is the neighbourhood of an immersed hyperplane which is a cycle.
Bigons can be defined from two non self-crossing immersed hyperplanes crossing each other at least twice in $D$: we define a \defn{bigon} to be the cubical neighbourhood of two paths in such hyperplanes crossing each other exactly twice which are inclusion minimal with this property.
Notice that these definitions also apply for \cato~cube complexes of dimension at most $2$.

 \begin{figure}[t]
    \centering
    \begin{subfigure}{0.2\textwidth}
    \centering
    \includegraphics[scale=0.4]{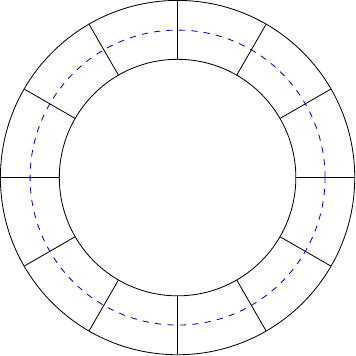}\hspace{1.6cm}
    \end{subfigure}
    \begin{subfigure}{0.2\textwidth}
    \centering
      \raisebox{0.5\height}{\includegraphics[scale=0.4]{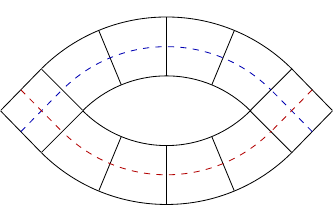}}
    \end{subfigure}
    \begin{subfigure}{0.2\textwidth}
    \centering
\raisebox{0.5\height}{\includegraphics[scale=0.4]{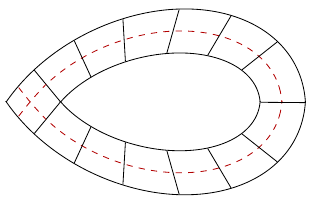}}
    \end{subfigure}
    \caption{From left to right: a nonogon, a bigon and a self-intersecting hyperplane. 
    The relevant immersed hyperplanes are represented in dashed lines.}
    \label{fig:nonogon}
\end{figure}

Both cells and simplices are specified by their vertices, so we will refer to a particular within a complex by a set of vertices. In addition, if $X$ is a simplicial complex (or cube complex), let $X^i$ be the \defn{$i$-skeleton} of $X$ which is the union of all $k$-simplices ($k$-cells) for $0\leq k \leq i$.

In this setting, a \defn{minimal disc diagram} is understood to be, for a fixed cycle $\gamma$ in $X^1$, a disc diagram $D\hookrightarrow X$ whose boundary is mapped to $\gamma$, chosen so that it minimises the number of faces, edges and vertices.
Hyperplanes and minimal disc diagrams are particularly well-behaved in \cato\ cube complexes, a statement which we make precise in Section~\ref{sec:hyperplanesbasic}.

\section{Proof overview and discussion}
\label{sec:outline}

This section provides, in a skeletal form, the proof of our main theorem. The main purpose is to provide a break-down of the proof into the components that span the remaining sections of this paper, as well as discuss the necessity of certain approaches.

As discussed in the introduction, the conditions in Theorem~\ref{thm:main} are necessary.
To prove sufficiency we proceed by induction, beginning with complexes with at most one edge. At each step, we aim to reduce the size of our cube complex: we have four processes that each use a particular substructure within the complex to define one or more smaller subcomplexes on which the induction hypothesis can be applied. This entails a number of verifications, namely that:
\begin{enumerate}
\item each substructure can be recognised from the boundary distance data that we start with, 
\item all resulting subcomplexes still satisfy the \cato\ property,
\item the boundary distances in all resulting subcomplexes can be recovered, and
\item if we are not in the base case, then at least one of the substructures exists in our finite \cato\ cube complex so that a reduction can be performed. 
\end{enumerate}

Our four chosen substructures are cut-vertices, corners of faces, vertices of degree $3$ that are not in any cube, and rows of cubes on the boundary. 
For the first three of these structures, there is a natural way to reduce our complex into smaller pieces and the corresponding verifications are relatively straightforward. These are detailed in Section~\ref{sec:cleaning}, and allow us to proceed with the assumption that our complex $X$ does not contain any of these three structures in which case we call $X$ `clean'.

The main work in our proof lies in handling rows of cubes on the boundary -- essentially maximal stacks of cubes with one side on the boundary (the precise definition is given in Section~\ref{sec:rowsofcubes}). These structures are a natural choice for induction arguments in \cato\ cube complexes because of their well-behaved hyperplanes. 
Here they work nicely in that they can easily be read off the boundary distance matrix, and their removal (for several natural definitions of removal) leave a subcomplex where the flag condition is preserved at each vertex and their hyperplanes allow us to recover boundary distances to newly created boundary vertices. 
These properties are proved in Section~\ref{sec:rowsofcubes}. Within our proof, rows of cubes are key to making bulletpoint 4 above true. The intuition for this comes from the controlled case when $X$ is a finite \cato\ cube complex that is homeomorphic to a ball, where a simple Euler characteristic argument can be used to show that there must exist a row of cubes on the boundary of $X$. 

Unfortunately, the property of being homeomorphic to a ball is not necessarily preserved when removing rows of cubes. See Figure~\ref{fig:Lshape} for instance: removing the central cube from three cubes glued together to form an `L' shape.
One could hope to reduce the resulting complexes by `splitting' then appropriately into subcomplexes homeomorphic to balls, but this approach is complicated by the fact that \cato\ cube complexes may possess `essential' lower dimensional features, in the sense that removing these yields complexes with non-trivial homotopy. See for example Figure~\ref{fig:ringeg}.

\begin{figure}[t]
    \centering
    \centering
    \includegraphics[scale=0.6]{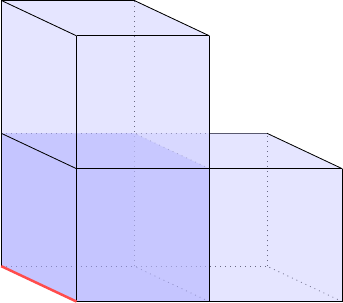}
    \caption{\cato\ cube complex where removing a row of cubes gives a complex which is not homeomorphic to a ball: the dark blue cube is a row of cubes on the boundary (its red edge is a path of length $1$ with endpoints and internal vertices of boundary degrees $3$ and $4$ respectively), yet removing it leaves two cubes sharing an edge.}
    \label{fig:Lshape}
\end{figure}
\begin{figure}[t]
    \centering
    \centering
    \includegraphics[scale=0.45]{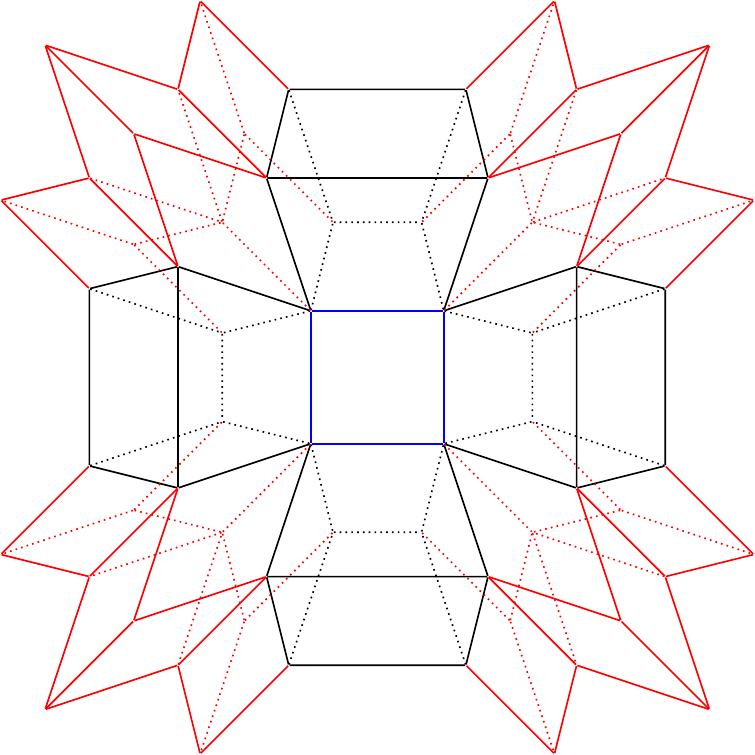}
    \caption{\cato\ cube complex where removing the face bounded by the central blue square gives a complex which is not contractible.}
    \label{fig:ringeg}
\end{figure}

To overcome this, we use a removal process where we leave the `back wall' of the row of cubes intact, thus ensuring contractibility. 
However this forces us to adapt our arguments for \cato\ complexes which may not have fixed Euler characteristic. Our approach is to `thicken' the complex $X$. This procedure, which is the topic of Section~\ref{sec:thethickening}, produces a cube complex $\mathbb{X}$ homeomorphic to the ball $B^3$ which contains $X$. Roughly speaking, this is achieved by taking $X$ together with a cubical shell around $X$.
This shell is constructed in such a way that there is a correspondence between the graphs of $\partial \thick{X}$ and $\partial X$ (Lemma~\ref{lem:thicksctooriginalsc}).
In particular, this correspondence allows us to transfer the previously mentioned Euler characteristic argument to the thickening $\thick{X}$.

\begin{proof}[Proof of Theorem~\ref{thm:main}]
Let $X$ be a contractible \cato\ cube complex with an embedding in $\mathbb{R}^3$.
We proceed by induction on the number of vertices of $\partial X$. We may assume without loss of generality that $X$ is clean, as otherwise, by the discussion described in Section~\ref{sec:cleaning} we can recognise this and perform a reduction to reduce to a complex with fewer vertices on the boundary which satisfies the induction hypothesis.

By Lemma~\ref{lem:goodscshell}, the thickening $\thick{X}$ of $X$ admits a good configuration $\mathcal{C}$. 
Under the correspondence described in Lemma~\ref{lem:thicksctooriginalsc}, $\mathcal{C}$ corresponds to a good configuration $\pi(\mathcal{C})$ in $X$. 
This good configuration is a pattern in the boundary distance matrix which can be recognised, and by Lemma~\ref{lem:scrowcorrespondance}, $\pi(\mathcal{C})$ corresponds in turn to a row $R$ of cubes on the boundary of $X$.
Finally, applying the reduction described in Section~\ref{sec:removingrowofcubes}, we reduce to a complex with smaller boundary which satisfies the induction hypothesis.
\end{proof}

\section{Technical toolbox}\label{sec:toolbox}
\subsection{Hyperplanes in \cato\ cube complexes}\label{sec:hyperplanesbasic}
Let $X$ be a cube complex and $Y\subseteq X$ a subcomplex.
Recall that the metric we consider is the graph metric on $1$-skeleta.
In this setting we say that $Y$ is \defn{convex} if any geodesic with both endpoints in $Y$ is entirely contained in $Y$.

A consequence of the \cato\ property in cube complexes is the presence of natural convex subcomplexes, namely neighbourhoods of immersed hyperplanes.
This convexity is crucial to recover boundary distances after removing parts of a \cato\ cube complex, and we use it in the form of the following theorem, based on a paper of Sageev \cite{sageev1995ends}.
\begin{theorem}\label{thm:wise}[Wise~\cite[Theorem~2.13]{wise}]
Let $X$ be a $CAT(0)$ cube complex.
\begin{enumerate}[label=(\roman*), font=\textup]
\item \label{thm:wise1} Each midcube lies in a unique immersed hyperplane.
\item \label{thm:wise2} Hyperplanes are $CAT(0)$ cube complexes.
\item \label{thm:wise3} The cubical neighbourhood $N(H) = H\times [0,1]$ of an immersed hyperplane $H$ is a convex subcomplex.
\item \label{thm:wise4} $X\setminus H$ consists of two connected components.
\end{enumerate}
\end{theorem}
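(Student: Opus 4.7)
The plan is to treat the four assertions in the order given, splitting them into local considerations (for (i) and (ii)) and global disc-diagram arguments relying on simple connectedness of $X$ (for (iii) and (iv)).

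For (i), this is essentially built into the construction: a hyperplane is a connected component of the space of midcubes under the equivalence generated by sharing a midcube of a common cubical face, and every point of a topological space lies in a unique connected component. For (ii), two conditions must be checked. The link condition is local: a vertex $v$ of $H$ is a midcube $M$ of some edge $e\subseteq X$, and an $n$-cube of $H$ containing $v$ corresponds to an $(n+1)$-cube of $X$ containing $e$. This identification embeds $\lk(v;H)$ simplicially into $\lk(w;X)$ for either endpoint $w$ of $e$, and flagness transfers because a clique in the image spans a simplex in $X$ that pulls back to the required simplex in $H$. For simple connectedness of $H$, I would take a loop $\gamma$ in $H^1$, thicken each of its edges (a midcube of some $2$-cube $F\subseteq X$) to a parallel edge of $F$ to obtain a loop $\hat\gamma$ in $X^1$, fill $\hat\gamma$ by a minimal disc diagram $D\to X$ guaranteed by simple connectedness of $X$, and read off a disc diagram in $H$ bounding $\gamma$ by restricting $D$ to the dual hyperplane associated to the edges of $\hat\gamma$.

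For (iii) I would argue by contradiction. Given $a,b\in N(H)^0$, suppose a geodesic $\gamma$ from $a$ to $b$ leaves $N(H)$. Concatenating with a path $\gamma'$ entirely inside $N(H)$ from $b$ back to $a$ yields a closed loop that bounds a minimal disc diagram $D\to X$. In such a diagram one rules out nonogons, bigons and self-crossing hyperplanes, each of which would admit a strict reduction contradicting minimality. Tracing the hyperplane $H$ across $D$, starting from the edge of $\gamma'$ dual to $H$, these restrictions force it to exit along $\gamma$ at an edge which is then also dual to $H$ and hence in $N(H)$. Comparing the resulting crossing pattern against the assumption that $\gamma$ is a geodesic produces a strictly shorter path from $a$ to $b$ inside $N(H)$, the desired contradiction. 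This is the step I expect to be the main obstacle: (i), (ii) and (iv) are largely framework-setting, while convexity of $N(H)$ is genuinely global, and rigorously executing the disc diagram surgery requires careful bookkeeping of how $H$ enters and exits $D$ and how its trace interacts with the hyperplanes dual to boundary edges along $\gamma$ and $\gamma'$. This is where the full force of the \cato\ hypothesis enters, via the no-bigon and no-self-crossing properties of minimal disc diagrams.

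For (iv), I would fix a basepoint $x_0\in X^0\setminus H$ and define a $\Z/2\Z$-valued side function on $X^0\setminus H$ by the parity of the number of times a chosen edge-path from $x_0$ to a given vertex crosses $H$. A local check shows that the parity flips precisely when traversing an edge dual to $H$ and is constant across all other edges, and simple connectedness of $X$ then makes the function well-defined independently of the path chosen. Non-emptiness of $H$ ensures that both values are attained, giving exactly two components.
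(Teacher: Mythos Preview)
The paper does not supply its own proof of this theorem: it is stated with attribution to Wise~\cite[Theorem~2.13]{wise} (itself based on Sageev~\cite{sageev1995ends}) and used as a black box, so there is no in-paper argument to compare against. Your outline is a reasonable sketch of the standard proof found in those references, with the disc-diagram analysis for~(iii) and the parity-of-crossings argument for~(iv) being exactly the usual approach; if anything, the simple-connectedness of $H$ in~(ii) is often deduced more cleanly from~(iii) (a convex subcomplex of a \cato\ cube complex is itself \cato), rather than by a separate disc-filling argument.
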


\subsection{Links of vertices}\label{sec:links}
The links of vertices in a complex contain important local information.
We will mostly be interested in deducing information about the local structure around vertices from their degree and existing partial information.

\begin{lemma}\label{lem:basic}
Let $X$ be a finite \cato\ cube complex with an embedding in $\R^3$ and $v$ a vertex of $\partial X$.
Then:
\begin{enumerate}[label=(\alph*), font=\textup]
\item \label{lem:basic1} For any subcomplex $Y\subseteq X$ containing $v$, there is a natural containment map $\lk_{Y}(v) \allowbreak \hookrightarrow \lk_{X}(v)$.
In particular, $\lk_{\partial X}(v)$ has at least as many components as $\lk_{X}(v)$.
\item \label{lem:basic2} If $v$ is contained in a cube $C$ of $X$, then $\lk_{\partial X}(v)$ has at least $3$ vertices.
If $\lk_{\partial X}(v)$ is further a single cycle, then $\lk_X(v)$ is homeomorphic to a disk $D^2$.
\item \label{lem:basic3} If $\lk_{\partial X}(v)$ does not contain a cycle, then the containment $\lk_{\partial X}(v) \subseteq \lk_{X}(v)$ is a bijection.
\item \label{lem:basic4} Suppose $\lk_{\partial X}(v)$ is a single cycle and $H\subseteq (\lk_{X}(v))^1$ is a subgraph.
This inclusion corresponds to an embedding of $H$ in an $\varepsilon$-sphere about $v$ in $\mathbb{R}^3$ which is a planar drawing for $H$.
Suppose that the following hold:
\begin{itemize}[noitemsep]
\item $H$ is a triangulation of $\lk_{\partial X}(v)$, meaning that $(\lk_{\partial X}(v))^1\subseteq H$ and this natural containment is a planar drawing of $H$ such that all vertices of $\lk_{\partial X}(v)$ lie on the outer face, and every other face is a triangle.
\item The following diagram, where the maps are the aforementioned natural containments, commutes. 
\[
  \begin{tikzcd}
     \lk_{\partial X}(v) \arrow{d} \arrow{r} & \arrow{dl}H & \quad \\
     \lk_{X}(v) & & \quad
  \end{tikzcd}
\]
\end{itemize}
Then the above embedding describes an isomorphism between $H$ and $(\lk_X (v) )^1$.
\end{enumerate}
\end{lemma}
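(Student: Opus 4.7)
The overall approach is to work with the geometric identification $\lk_X(v) = S_\varepsilon(v) \cap X$ for small $\varepsilon>0$, under which $\lk_{\partial X}(v) = S_\varepsilon(v) \cap \partial X$. The simplicial structure of $\lk_X(v)$ is induced by cells of $X$ meeting $S_\varepsilon(v)$, and an edge $e$ of $\lk_X(v)$, corresponding to a $2$-cell $f$ of $X$ through $v$, lies in a $2$-simplex of $\lk_X(v)$ iff $f$ is contained in a $3$-cube; so $e$ lies in exactly one $2$-simplex iff $f$ lies in exactly one $3$-cube iff $f \in \partial X$. The key topological ingredient I would establish is: for any $2$-dimensional subcomplex $K \subseteq S^2$, the subgraph of $K^1$ consisting of edges lying in at most one $2$-simplex of $K$ (the combinatorial boundary of $K$) has even degree at every vertex, and hence decomposes into edge-disjoint cycles of length $\geq 3$. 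This follows from the alternation of triangle and non-triangle sectors in the cyclic order of edges around each vertex in $S^2$. Establishing and invoking this fact is the main technical obstacle; the remaining work is essentially local.

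For part (a), the containment sends an $n$-simplex of $\lk_Y(v)$, which comes from an $(n+1)$-cell of $Y$ through $v$, to the same spanning vertex set viewed in $\lk_X(v)$; this works because $Y \subseteq X$. For the component claim, any component $C$ of $\lk_X(v)$ is a connected closed subset of $S^2$ distinct from $S^2$ (else $v$ would have a Euclidean neighbourhood inside $X$, contradicting $v \in \partial X$), so its topological boundary in $S^2$ is nonempty and contained in $S^2 \cap \partial X = \lk_{\partial X}(v)$. Each component of $\lk_X(v)$ therefore meets $\lk_{\partial X}(v)$, yielding the surjection $\pi_0(\lk_{\partial X}(v)) \twoheadrightarrow \pi_0(\lk_X(v))$.

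For parts (b) and (c), let $K$ be the union of $2$-simplices of $\lk_X(v)$. By the identification above, the combinatorial boundary of $K$ is contained in $\lk_{\partial X}(v)$. In (b), $v$ being in a cube gives $K \neq \emptyset$ and $v \in \partial X$ gives $K \neq S^2$, so the combinatorial boundary of $K$ is a nonempty union of cycles of length $\geq 3$, yielding the first claim. If $\lk_{\partial X}(v)$ is a single cycle $\gamma$, then the combinatorial boundary of $K$, being a subgraph of $\gamma$ with every vertex of even degree, must equal $\gamma$ itself. The cycle $\gamma$ separates $S^2$ into two disks, and $K$ must fill exactly one (since $K \neq S^2$), giving $K \cong D^2$. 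A short case analysis then rules out dangling edges or isolated vertices of $\lk_X(v) \setminus K$ using that every vertex of $\gamma$ has degree exactly $2$, so $\lk_X(v) = K \cong D^2$. For (c), if $\lk_{\partial X}(v)$ has no cycle, the combinatorial boundary of $K$ must be empty, forcing $K = \emptyset$; a direct verification then shows every $2$-cell and every edge of $X$ through $v$ lies in $\partial X$, giving $\lk_X(v) = \lk_{\partial X}(v)$.

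For part (d), $\lk_X(v)$ is by (b) a flag triangulated disk bounded by $\lk_{\partial X}(v)$. I would prove $V(H) = V((\lk_X(v))^1)$ and $E(H) = E((\lk_X(v))^1)$. For any inner triangular face $F$ of the planar drawing of $H$, its three corners are pairwise adjacent in $(\lk_X(v))^1$, so by flagness they span a $2$-simplex of $\lk_X(v)$ whose geometric realisation exactly fills $F$. Since the interior of a simplex contains no other cells of a simplicial complex, $F$ contains no vertex or edge of $(\lk_X(v))^1$ beyond its three bounding edges and corners; this rules out missing vertices. Any putative missing edge $e^* \in E((\lk_X(v))^1) \setminus E(H)$ would by planarity lie strictly inside some inner face $F$ of $H$, hence be one of the three edges of the filling $2$-simplex, a contradiction. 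Therefore the embedding gives $H = (\lk_X(v))^1$.
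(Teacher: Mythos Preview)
Your argument is correct and in several places more complete than the paper's own proof, but it takes a genuinely different route. The paper argues each part by direct topological inspection of the $\varepsilon$-sphere: for (c), it simply observes that an acyclic graph cannot separate $S^2$, so $S_\varepsilon(v)\setminus\lk_{\partial X}(v)$ is connected and must lie entirely outside $X$; for the disk claim in (b) it notes that the boundary cycle separates $S^2$ into two disks, one of which is $\lk_X(v)$; and for (d) it argues by contradiction that a proper inclusion $H\subsetneq(\lk_X(v))^1$ would force a further subdivision of some triangular face, producing a $K_4$ in the link and hence a $4$-cell in $X$. By contrast, you organise (b) and (c) around a single combinatorial lemma, that the edges of a pure $2$-subcomplex $K\subseteq S^2$ lying in exactly one $2$-simplex form a subgraph of even degree at every vertex, which you then feed into both statements. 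Your treatment of (d) is a direct filling argument using disjointness of open simplices rather than the paper's $K_4$ contradiction, and your surjectivity argument in (a) (each component of $\lk_X(v)$ meets $\lk_{\partial X}(v)$) is actually more explicit than what the paper writes. The trade-off is that your unified lemma requires a small amount of setup (the parity argument via alternation of filled and unfilled sectors around each vertex), whereas the paper's case-by-case topological reasoning is shorter per item but less systematic; both are sound.
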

A useful consequence of the second bullet point is that when $X$ is homeomorphic to the ball $B^3$, the link of any of its boundary vertices is homeomorphic to a disc $D^2$.

\begin{proof}
\begin{enumerate}[label=(\alph*), font=\textup]
\item The embedding $Y \hookrightarrow X$ gives a natural embedding $\lk_{Y}(v) \hookrightarrow \lk_{X}(v)$ through the identification between $k$-cells incident to $v$ and $(k-1)$-cells in the links at $v$.
In particular, the natural embedding $\partial X\hookrightarrow X$ guarantees that $\lk_{\partial X}(v)$ has at least as many connected components as $\lk_{X}(v)$: a path $f$ (interpreted here as a continuous map $f\colon [0,1] \rightarrow \lk_{\partial X}(v)$) in $\lk_{\partial X}(v)$ extends to a path $f'\colon[0,1]\rightarrow \lk_{X}(v)$ by precomposing with the inclusion map $\lk_{\partial X}(v) \hookrightarrow \lk_X(v)$ as the former is obtained from the latter by removing simplices.

\item Consider an $\varepsilon$-sphere around $v$ in $X$.
The connected component of $\lk_X(v)$ containing the $2$-simplex corresponding to $C$ is $2$-dimensional and thus -- as $\lk_X(v)$ is flag and therefore has no double edges -- has at least three vertices on its boundary, as desired.

If $\lk_{\partial X}(v)$ is a single cycle, $\lk_{\partial X}(v)\subseteq \lk_{X}(v)$ is homeomorphic to $S^1$ and so separates the $\varepsilon$-sphere into two components homeomorphic to discs $D^2$.
One of these components is $\lk_{X}(v)$, as desired.

\item If $\lk_{\partial X}(v)$ does not contain a cycle, then an $\varepsilon$-sphere about $v$ is not disconnected by removing $\lk_{\partial X}(v)$. It follows that $\lk_{X}(v)$ lies entirely on the boundary.

\item First, $\lk_{\partial X}(v)$ corresponds to a cycle in a $\varepsilon$-sphere about $v$, separating this sphere into two parts.
Since $H$ is a triangulation and $\lk_{X}(v)$ is flag, $\lk_{X}(v)$ contains at least one $2$-simplex and thus (exactly) one of the two parts of the $\varepsilon$-sphere is contained in $\lk_{X}(v)$.

Since links of $X$ are flag and $H\subseteq (\lk_X(v))^1$ is a triangulation, each triangle of the planar drawing of $H$ bounds a $2$-simplex in $\lk_X(v)$. 
Now, $(\lk_{\partial X}(v))^1 \subseteq H$ so $\lk_{X}(v)$ and the $2$-simplices bounded by edges of $H$ are both homeomorphic to a disc $D^2$ with boundary $\lk_{\partial X}(v)$, the only difference being that $\lk_{X}(v)$ may contain subdivisions of $2$-simplices bounded by edges of $H$. This in particular gives a planar drawing of $H$.

Suppose now that $H$ is a proper subcomplex of $(\lk_{X}(v))^1$.
Since $(\lk_{\partial X}(v))^1\subseteq H$ and every triangle of $H$ bounds a $2$-simplex in $\lk_{X}(v)$, there must be a face $F$ of $H$ that is triangulated in $(\lk_{X}(v))^1$, meaning that there is a single vertex adjacent to every vertex in the triangle.
But such a vertex forms a clique of size $4$ in the graph of $\lk_{X}(v)$ which, since $X$ is $\cato$, implies that there is a $4$-dimensional cell in $X$, a contradiction.\qedhere
\end{enumerate}
\end{proof}

Recall that $G(X)$ is the $1$-distance subgraph of $X$ induced by the vertices of $\partial X$.
These facts allow us to diagnose structures appearing in $X$ from adjacencies in $G(X)$ and partial information on the structure of $X$.
We elaborate on this in the next lemma.

A \defn{cut-vertex} in $X$ is a vertex $v\in X^0$ such that $X\setminus v$ has at least two non-empty connected components.
Recall that a \defn{corner of a cube} in $X$ is a vertex $v$ contained in a unique cube of $X$. In particular, $v\in X^0$ and $\deg_X(v)=3$ and $v$ is incident to a unique cube in $X$, whose three faces incident to $v$ lie on $\partial X$.
\begin{lemma}\label{lem:basiclinks}
Let $X$ be a finite \cato\ cube complex with an embedding in $\R^3$ and $v\in (\partial X)^0$.
\begin{enumerate}[label=(\alph*), font=\textup]
\item \label{lem:basiclinks1} If a vertex $v$ is a cut-vertex of $X$, then $\lk_{\partial X}(v)$ is disconnected.
\item \label{lem:basiclinks2} If $\lk_{\partial X}(v)$ is a triangle, then $v$ is a corner of a cube. 
\item \label{lem:basiclinks3} If $\deg_{G(X)}(v)=4$ and $v$ is in a cube of $X$, then $\deg_{\partial X}(v)=4$, which implies that $\lk_{\partial X}(v)$ is either a cycle of length $4$, a triangle with a pendant edge, or a triangle plus an isolated vertex.
Moreover, one of the following holds:
\begin{itemize}
\item $v$ is incident to exactly one cube and one free face of $X$;
\item $v$ is incident to exactly one cube and one edge of $X$ not contained in any face;
\item $v$ is incident to at least four cubes of $X$, and the four faces incident to $v$ on $\partial X$ each lie in different cubes; or 
\item $v$ is incident to exactly two cubes in $X$, each of which contains two of the four faces incident to $v$ on $\partial X$.
\end{itemize}
\end{enumerate}
\end{lemma}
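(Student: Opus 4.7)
The plan is to prove each part by translating the hypotheses into geometric statements on a small sphere $S = S_\varepsilon(v)$ around $v$, on which $\lk_X(v) = S \cap X$ sits as a subcomplex whose topological frontier in $S$ is $\lk_{\partial X}(v) = S \cap \partial X$.

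For part (a), since $X \setminus \{v\}$ has at least two connected components and each has $v$ in its closure, a small punctured ball $(B_\varepsilon(v) \cap X) \setminus \{v\}$ inherits at least two components. This punctured ball deformation retracts onto $\lk_X(v)$, so $\lk_X(v)$ is disconnected, and Lemma~\ref{lem:basic}\ref{lem:basic1} then gives that $\lk_{\partial X}(v)$ is disconnected. For part (b), Lemma~\ref{lem:basic}\ref{lem:basic2} gives $\lk_X(v) \cong D^2$ with boundary the $3$-cycle $\lk_{\partial X}(v)$; flagness fills in this cycle as a $2$-simplex, which is itself a disc with the same boundary, so $\lk_X(v)$ equals it. Thus $v$ lies in exactly three edges and three faces, all contained in the unique cube corresponding to this $2$-simplex, so $v$ is a corner of a cube.

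For part (c), I would first show $\deg_{\partial X}(v) = 4$. Clearly $\deg_{\partial X}(v) \leq \deg_{G(X)}(v) = 4$. If $\lk_{\partial X}(v)$ were acyclic, Lemma~\ref{lem:basic}\ref{lem:basic3} would give $\lk_{\partial X}(v) = \lk_X(v)$; but the former is at most $1$-dimensional while the latter contains a $2$-simplex since $v$ lies in a cube, a contradiction. So $\lk_{\partial X}(v)$ contains a cycle and has at least three vertices. If it had exactly three, it would be a triangle, and part (b) would yield $\deg_X(v) = 3$, contradicting $\deg_{G(X)}(v) = 4$. So $\deg_{\partial X}(v) = 4$, making $\lk_{\partial X}(v)$ a four-vertex graph containing a cycle. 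The possible shapes are $C_3 + K_1$, the paw (triangle with a pendant edge), $C_4$, the diamond ($K_4 \setminus e$), and $K_4$. Flagness rules out $K_4$ (it would yield a $3$-simplex in $\lk_X(v)$ and thus a $4$-cube in $X$). The diamond is also impossible: flagness would force its chord to appear in two $2$-simplices, so the corresponding face would lie in two cubes and hence not on $\partial X$, contradicting the presence of the chord in $\lk_{\partial X}(v)$.

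For each of the three remaining shapes, I would determine $\lk_X(v)$, and hence the cube structure at $v$, from flagness and the frontier condition. In the paw and $C_3 + K_1$ cases, flagness forces the triangle of $\lk_{\partial X}(v)$ to be a $2$-simplex of $\lk_X(v)$, and I would argue that no extra structure is possible: any additional $2$-simplex or edge in $\lk_X(v)$ either introduces a new vertex to the frontier (since any new vertex or new edge of $\lk_X(v)$ incident to the existing paw is itself on the frontier unless contained in enough $2$-simplices, which escalates), or it places an existing paw-edge in two $2$-simplices, making the corresponding face interior to $X$ and thus absent from $\lk_{\partial X}(v)$, a contradiction. This pins down $\lk_X(v)$ to the $2$-simplex together with either an isolated vertex (case 2: one cube and one dangling edge) or a pendant edge (case 1: one cube and one free face). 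For $\lk_{\partial X}(v) = C_4$, Lemma~\ref{lem:basic}\ref{lem:basic2} makes $\lk_X(v)$ a flag triangulated disc with boundary $C_4$; an Euler-characteristic count shows such a triangulation has $T = 2 + 2k$ triangles where $k \geq 0$ is the number of internal vertices. The case $k = 0$ forces the diamond, giving two cubes sharing a face (case 4), while $k \geq 1$ yields at least four cubes with each of the four boundary faces in a different cube (case 3).

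The main obstacle is the escalation argument in the paw and $C_3 + K_1$ cases: one must carefully check that no added structure -- whether a $2$-simplex on the four existing vertices, or a configuration involving new vertices (which must themselves lie in at least four $2$-simplices since their corresponding edges to $v$ would be interior to $X$) -- can coexist with the prescribed frontier, tracking that the resulting configurations always violate either flagness (eventually producing $K_4$ and hence a $4$-cube) or the equality of $\lk_{\partial X}(v)$ with the frontier.
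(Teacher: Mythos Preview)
Your proposal is correct and follows broadly the same strategy as the paper---working on the sphere $S_\varepsilon(v)$ and using Lemma~\ref{lem:basic}---but part~(c) diverges in two ways worth noting.

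First, for the $C_4$ case the paper relies on Lemma~\ref{lem:basic}\ref{lem:basic4} rather than an Euler count: if two of the four boundary faces share a cube, the corresponding link-edges lie in a triangle, so $(\lk_X(v))^1$ contains the diamond~$H$, and Lemma~\ref{lem:basic}\ref{lem:basic4} forces $(\lk_X(v))^1=H$ exactly. Your Euler argument gives $T=2+2k$ but does not by itself justify the claim that $k\geq 1$ forces the four boundary faces into four different cubes: a priori a triangulated disc with an internal vertex could still have two adjacent boundary edges in a common triangle. What actually rules this out is flagness---once an adjacent pair shares a triangle the chord $ac$ exists, the clique $acd$ forces the second triangle, and the two $2$-simplices already fill the disc, leaving no room for internal vertices. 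You should make this flagness step explicit (or simply invoke Lemma~\ref{lem:basic}\ref{lem:basic4}, which packages it).

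Second, your escalation argument for the paw and $C_3+K_1$ cases is sound but heavier than necessary. Since you have already set up $\lk_{\partial X}(v)$ as the topological frontier of $\lk_X(v)$ in $S^2$, you can argue directly: the only $2$-dimensional region of $S^2$ with frontier contained in the paw is one of the discs bounded by the triangle, and the pendant edge (respectively isolated vertex) must be a genuinely $1$-dimensional (respectively $0$-dimensional) part of $\lk_X(v)$. Then the disc, being flag with boundary a $3$-cycle, is a single $2$-simplex by the same reasoning you used in~(b). This yields $\lk_X(v)$ exactly and hence ``exactly one cube plus one free face/dangling edge'' in one stroke, avoiding the case-chase you flag as the main obstacle. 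The paper is in fact quite terse at this point and does not spell out either argument; your care in ruling out the diamond and~$K_4$ shapes for $\lk_{\partial X}(v)$ is a detail the paper omits.
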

\begin{proof}
\begin{enumerate}[label=(\alph*), font=\textup]
\item If $\lk_{\partial X}(v)$ is connected, then by Lemma~\ref{lem:basic} \ref{lem:basic1} so is $\lk_{X}(v)$. Hence $X\setminus v$ is also connected.
\item By Lemma~\ref{lem:basic} \ref{lem:basic4} with $H=\lk_{\partial X}(v)$, $\lk_{\partial X}(v)\simeq (\lk_X(v))^1$ and so $\lk_{X}(v)$ is a single triangle as desired.
\item Suppose for contradiction that $\deg_{\partial X}(v)\neq 4$. Then, $\deg_{\partial X}(v)\leq 3$.
Since $v$ is in a cube of $X$, by Lemma~\ref{lem:basic} \ref{lem:basic2} $\lk_{\partial X}(v)$ has exactly $3$ vertices. In particular, $\lk_{\partial X}(v)$ is connected: otherwise each connected component of $\lk_{\partial X}(v)$ would have fewer than $3$ vertices and hence no cycles, this would lead to a contradiction in view of Lemma~\ref{lem:basic} \ref{lem:basic3} as $\lk_{X}(v)$ contains at least one $2$-simplex.
Hence, by Lemma~\ref{lem:basic} \ref{lem:basic4} $v$ is then the corner of a cube, so $\deg_{G(X)}=3$, a contradiction.

If $\lk_{\partial X}(v)$ does not contain a cycle then Lemma \ref{lem:basic} \ref{lem:basic3} contradicts the fact that $v$ lies in a cube. Thus $\lk_{\partial X}(v)$ is either a $4$-cycle, a $3$-cycle with a pendant edge, or a $3$-cycle plus an isolated vertex.

In the first case, if two faces incident to $v$ lie in the same cube, then the corresponding edges in $\lk_X(v)$ necessarily share an endpoint, and lie in a triangle.
Hence, $(\lk_{X}(v))^1$ contains a $4$-cycle with an extra edge. 
By Lemma~\ref{lem:basic} \ref{lem:basic4}, $\lk_X(v)$ is a $4$-cycle with an extra edge, with each triangle bounding a $2$-simplex. Otherwise the four faces of $\partial X$ containing $v$ lie in four different cubes.

In the second case, the triangle bounds a $2$-simplex and the final edge corresponds to a free face.

Similarly, in the last case the triangle bounds a $2$-simplex and the remaining vertex corresponds to an edge not contained in a face.\qedhere
\end{enumerate}
\end{proof}

We conclude this technical section with a standard observation that in \cato\ cube complexes, cycles of length $4$ bound a face.
\begin{lemma}\label{lem:squaresarefilled}
Let $w,x,y,z$ be vertices forming a square in the graph of $X$.
Then these vertices lie in a face of $X$.
\end{lemma}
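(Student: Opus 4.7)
The plan is to build a minimal disc diagram $D \to X$ bounding the $4$-cycle $\gamma = wxyzw$ and show that $D$ consists of a single face. Since $X$ is \cato\ and hence simply connected, the loop $\gamma$ bounds some disc diagram in $X$, and we may choose $D$ to be minimal. A standard property of minimal disc diagrams in \cato\ cube complexes (as referenced in Section~\ref{sec:hyperplanesbasic}) is the absence of nonogons, bigons, and self-crossing hyperplanes. Consequently, every hyperplane of $D$ is an embedded path with both endpoints on $\partial D$, and any two distinct hyperplanes cross in at most one $2$-cell.

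Now I count. Since $\partial D$ has exactly $4$ edges and each hyperplane contributes exactly $2$ endpoints to $\partial D$, the diagram $D$ contains exactly two hyperplanes $H_1$ and $H_2$. Each $2$-cell of $D$ (being a square) contains precisely two midcubes, and by no-self-crossing these midcubes belong to distinct hyperplanes. Hence every $2$-cell of $D$ corresponds to a crossing of $H_1$ and $H_2$, and conversely each such crossing sits inside a unique $2$-cell. It follows that the number of $2$-cells in $D$ equals the number of crossings of $H_1$ with $H_2$, which is at most $1$ by the no-bigon property.

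Since a quadrangulation of the disc with $\geq 1$ boundary edges has at least one $2$-cell, $D$ must have exactly one. Its image under the locally injective combinatorial map $D\to X$ is then a face of $X$ whose boundary $1$-cycle is $\gamma$, so $w, x, y, z$ all lie on this face, as required. The only delicate point is invoking the structural properties of minimal disc diagrams in \cato\ cube complexes; once those are available, the remainder is an easy count of boundary-edge endpoints and hyperplane crossings.
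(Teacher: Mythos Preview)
Your proof is correct and follows essentially the same approach as the paper: take a minimal disc diagram on the $4$-cycle, invoke the absence of nonogons, bigons, and self-crossings, deduce there are exactly two hyperplanes, and conclude there is at most one face. The paper reaches the final bound by noting that two adjacent faces would force a third hyperplane, whereas you count faces as crossings of $H_1$ with $H_2$; these are the same idea packaged slightly differently.
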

\begin{proof}
Let $C$ be the square they form. 
Since $X$ is simply connected, by van Kampen's theorem (see \cite[Lemma~3.1]{wise}) there is a disc diagram $D\rightarrow X$ with $C\simeq \partial D$.
Let $D$ be such a disk, chosen to minimise its number of faces.
Under such minimality assumptions, $D$ contains in particular no nonogons, no bigons and its hyperplanes do not self-cross (see \cite[Lemma~3.2]{wise}). 
Since $C$ has only $4$ edges and any hyperplane crosses the boundary twice, $D$ admits at most two hyperplanes and hence has at most one face:
if two distinct faces share an edge, their four midcubes belong to three distinct hyperplanes of $D$, as they would otherwise force a forbidden structure in $D$.
\end{proof}
In particular, pairs of vertices with two common neighbours correspond to faces of $X$.
As a consequence, we can easily find the neighbours of a vertex $v\in \partial X$ which lie in the same face: they are precisely those that have a common neighbour in $G(X)$ other than $v$.
Hence, from the $1$-skeleton of $\partial X$ we can recover the link in $\partial X$ of every vertex in $\partial X$.

\section{Cleaning}\label{sec:cleaning}
Let $X$ be a finite \cato\ cube complex.
Recall that a vertex is the \defn{corner of a cube} if it has degree $3$ in $X$ and is contained in a unique cube in $X$. Analogously, a \defn{corner of a face} is a vertex with degree $2$ in $X$ that is contained in a unique face in $X$.
Recall that \defn{cut-vertex} in $X$ is a vertex $v\in X^0$ such that $X\setminus v$ has at least two non-empty connected components.
Each such connected component together with $v$ has fewer boundary vertices than $X$.

In this section, we describe substructures with their recognition and reduction steps in \cato\ cube complexes.
In the order that they will be performed, the cleaning operations are the following:
\begin{enumerate}[label={(\arabic*)}, noitemsep]
\item removing cut-vertices;
\item removing corners of faces;
\item removing vertices of degree $3$ that are not in a cube.
\end{enumerate}
For (1), the idea of reduction is that if $v$ is a cut-vertex then we will try to apply the induction hypothesis to each connected component of $X\setminus v$ with $v$ added back. If $v$ is one of the features in (2) and (3), then we will apply the induction hypothesis to $X-v$. In order to apply induction, it is important to note that each of the above reduces the number of boundary vertices by at least one. A \cato\ cube complex with none of the above features is called \defn{clean}.

Note that the order of our cleaning operations is important in the sense that when we go through the steps for a later structure in the list, we sometimes need the assumption that none of the earlier structures are present. Likewise, it is important that we can later assume that our complex is clean to then show that there exists a row of cubes on the boundary.

\subsection{Removing cutvertices}
\textit{Recognition}. A vertex $v\in \partial X^0$ is a cut-vertex in $X$ if and only if it is a cut-vertex in the graph of $\partial X^1$.

\begin{proposition}
Let $X \hookrightarrow \mathbb{R}^3$ be a \cato\ cube complex.
Then $X\setminus v$ is connected if and only if $\partial X \setminus v$ is connected.
\end{proposition}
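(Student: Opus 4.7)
The plan is to prove the two directions of the equivalence separately. For the forward direction, I would argue that if $X\setminus v$ is disconnected with components $C_1, C_2$, then the connected subcomplexes $X_i := C_i\cup\{v\}$ decompose $X$ with $X_1\cap X_2 = \{v\}$. Because these pieces share only $v$ inside $\mathbb{R}^3$, the geometric boundaries satisfy $\partial X_1\cap\partial X_2\subseteq\{v\}$ while $\partial X = \partial X_1\cup\partial X_2$. Since each $X_i$ is a nontrivial compact subcomplex of $\mathbb{R}^3$, the set $\partial X_i\setminus v$ is non-empty, giving a decomposition $\partial X\setminus v = (\partial X_1\setminus v)\sqcup(\partial X_2\setminus v)$ that shows $\partial X\setminus v$ is disconnected.

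For the converse direction I would argue by contradiction: suppose $\partial X\setminus v$ has distinct components $A$ and $B$ but $X\setminus v$ is connected. Choose $a\in A$ and $b\in B$ adjacent to $v$ in $\partial X$; such vertices exist because $v$ must have neighbours in each component of $\partial X\setminus v$ that it cuts off. A path $\gamma$ from $a$ to $b$ in $X\setminus v$ combined with $b\to v\to a$ forms a cycle in $X$. By the simple connectedness of $X$ (via Cartan--Hadamard for $\cato$ cube complexes), this cycle bounds a minimal disc diagram $D\to X$. The link of $v$ in $D$ is an arc that maps to a path in $\lk_X(v)$ from $a^*$ (the vertex of $\lk_X(v)$ corresponding to $va$) to $b^*$ (corresponding to $vb$), and both endpoints lie in $\lk_{\partial X}(v)$. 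The goal is then to show that $a^*$ and $b^*$ already lie in the same component of $\lk_{\partial X}(v)$: granting this, consecutive vertices along a path in $\lk_{\partial X}(v)$ joining them correspond to edges at $v$ sharing a common face of $\partial X$, and each such face furnishes a length-$2$ bypass of $v$; concatenating these bypasses produces a path from $a$ to $b$ in $\partial X\setminus v$, contradicting $A\ne B$.

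The main obstacle is proving that $a^*$ and $b^*$ land in the same component of $\lk_{\partial X}(v)$. Lemma~\ref{lem:basic}\ref{lem:basic1} gives only a surjection of components from $\lk_{\partial X}(v)$ onto $\lk_X(v)$, so a priori $\lk_X(v)$ could be connected while $\lk_{\partial X}(v)$ is not. To exclude this pathological case I would use that $\lk_X(v)$ is a flag $2$-complex embedded in the $\varepsilon$-sphere $S^2\subseteq\mathbb{R}^3$, combined with the simple connectedness of $X$, to argue that each connected component of $\lk_X(v)$ has connected topological boundary in $S^2$ -- in particular ruling out annular links and forcing the map on components to be a bijection.
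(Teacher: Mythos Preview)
Your forward direction is fine and matches the paper's (which dismisses it as clear).

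For the converse, you have correctly located the crux: once you have the disc diagram $D$ and the arc in $\lk_X(v)$ from $a^*$ to $b^*$, everything hinges on showing that $a^*$ and $b^*$ lie in the same component of $\lk_{\partial X}(v)$. But your justification for this is the gap. Invoking simple connectedness of $X$ is not enough: the paper's own cautionary example, the region $Y$ between two internally tangent spheres, is homotopy equivalent to $S^2$ and hence simply connected, yet the tangency vertex has precisely an annular link with disconnected boundary. So you would at minimum need contractibility, not just $\pi_1=0$, and even then you have not indicated how contractibility of $X$ forces each component of $\lk_X(v)\subset S^2$ to have connected topological boundary. That is a genuine global-to-local step (some Alexander duality or separation argument in $\mathbb{R}^3$ would be needed), and without it the proof does not close.

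The paper avoids the link analysis altogether and argues via hyperplanes. The key observation is that the restriction of any immersed hyperplane of $X$ to $\partial X$ is a closed walk that avoids all vertices, so it lies entirely in $A$ or entirely in $B$. Using that hyperplanes in a \cato\ cube complex are themselves \cato\ (here, of dimension at most~$2$), the paper shows that a hyperplane $H$ contains no nonogons, and hence that a hyperplane on the $A$-side cannot cross one on the $B$-side (an intersection $H\cap H'$ would have to reach $\partial H\subseteq A$, contradicting $H'\subseteq B$). Your disc diagram $D$ then provides the contradiction: the faces around $v$ in $D$ give a chain $H_0,\dotsc,H_k$ of hyperplanes with $H_{i-1}$ crossing $H_i$, starting on the $A$-side (dual to $va$) and ending on the $B$-side (dual to $vb$). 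This hyperplane route uses the \cato\ structure of the $2$-dimensional hyperplanes rather than any delicate statement about how $\lk_X(v)$ sits inside $S^2$.
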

We stress that the \cato\ condition is necessary: consider for instance any cubulation of the space $Y = \{(x,y,z) \colon x^2+y^2+z^2 \leq 4 \text{ and } (x-1)^2+y^2+z^2\geq 1\}$ consisting of the points between two spheres meeting at the single point $p=(2,0,0)$. Note that $Y$ is not contractible. Then $Y\setminus p$ is connected while $\partial Y\setminus p$ is disconnected.

\begin{proof}
For any cube complex $Z$ and vertex $v\in Z$, note that $Z\setminus v$ is connected if and only if $Z^1\setminus v$ is connected.
Hence, it suffices to show the assertion for $1$-skeleta: that $X^1\setminus v$ is connected if and only if $\partial X^1\setminus v$ is connected.

If $X\setminus v$ is disconnected, then clearly $\partial X\setminus v$ is disconnected.
For the converse, suppose that $\partial X\setminus v$ has at least two connected components. Let $A$ be such a component and write $B\coloneqq \partial X\setminus (v \cup A)$.
Note that since the restriction of an immersed hyperplane of $X$ to $\partial X$ is connected (it is a closed walk) and avoids $X^0$, it cannot intersect both $A$ and $B$. At the same time, it must intersect one of the two.
Consider two immersed hyperplanes $H, H'\hookrightarrow X$ such that $H$ and $H'$ restricted to the boundary of $X$ are contained in $A$ and $B$ respectively. 
By Theorem~\ref{thm:wise} the hyperplanes $H$, $H'$ are \cato~cube complexes, and further their top dimension is at most $2$.
Recall that nonogons in a disc diagram are hyperplanes whose image is a cycle, and that a self-crossing hyperplane is one which contains two midcubes of some face.

\begin{claim}
The immersed hyperplane $H$ contains no nonogons. 
\end{claim}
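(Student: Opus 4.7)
My plan is to derive a contradiction from the existence of a nonogon in $H$ by iterating Theorem~\ref{thm:wise}. Suppose for contradiction that $H$ contains a nonogon, that is, there is an immersed hyperplane $K\hookrightarrow H$ whose cubical neighbourhood $N(K)$ is a cycle of squares, and in particular is homotopy equivalent to $S^1$.

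By Theorem~\ref{thm:wise}\ref{thm:wise2} applied to $X$, the standalone hyperplane $H$ is itself a \cato\ cube complex, of dimension at most $2$ since $X$ is $3$-dimensional. Applying Theorem~\ref{thm:wise} now with $H$ in the role of $X$, the immersed hyperplane $K$ is a \cato\ cube complex of dimension at most $1$, and its cubical neighbourhood satisfies $N(K) = K\times [0,1]$ by Theorem~\ref{thm:wise}\ref{thm:wise3}. A one-dimensional \cato\ cube complex is a simply connected graph and hence a tree, so $K$ is contractible and therefore so is $N(K) = K\times [0,1]$. This contradicts $N(K)\simeq S^1$.

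I do not expect any real obstacle here beyond unpacking the definitions. The one potential subtlety is to ensure that the cubical neighbourhood $N(K)$ is literally the product $K\times [0,1]$ rather than some quotient in which identifications could introduce nontrivial topology; this is supplied directly by Theorem~\ref{thm:wise}\ref{thm:wise3}, so $N(K)$ inherits the homotopy type of $K$ and the argument goes through cleanly.
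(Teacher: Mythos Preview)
Your proof is correct and takes a genuinely different route from the paper's. You apply Theorem~\ref{thm:wise} recursively: since $H$ is itself \cato, its hyperplanes are \cato\ and hence (being $1$-dimensional) trees, so no hyperplane of $H$ can be a cycle and no nonogon can exist. The paper instead argues combinatorially inside $H$: it first shows that any internal vertex of a disc diagram in $H$ has degree at least $4$, via a case analysis on what low-degree vertices of $H$ force in $X$ (degree $1$ or $2$ forces the vertex onto $\partial H$; degree $3$ with three incident faces would force a cube of $X$ all three of whose midcubes lie in $H$, which is impossible). It then takes a minimal disc diagram bounded by one ring of the supposed nonogon, so that all boundary vertices have degree exactly $3$, and invokes \cite[Lemma~3.4]{john2D} to obtain a boundary vertex of degree $2$, a contradiction. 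Your approach is shorter and more conceptual, using only the \cato\ machinery already quoted; the paper's argument is more hands-on and establishes the auxiliary degree-$4$ fact along the way, though that fact is not reused elsewhere. As a minor streamlining: since a nonogon means the hyperplane $K$ itself is a cycle, the contradiction with Theorem~\ref{thm:wise}\ref{thm:wise2} (which forces $K$ to be simply connected) is already immediate without passing through $N(K)$.
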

\begin{poc}
We first show that internal vertices of any disc diagram $D\hookrightarrow H$ must have degree at least $4$. 
Observe that a vertex $v\in H$ of degree $1$ corresponds to the edge $v\times I\in X$ incident to a single face, and therefore is on $\partial X$. A vertex $v\in H$ of degree $2$ is incident to at most one face in $H$ as this would otherwise create a bigon in the link of vertex $v\times \{0\}\in X$, and thus is on $\partial H$. Lastly, if a vertex $v\in H$ has degree $3$ and is incident to three faces $F_1,F_2,F_3$ in $H$, then the faces $F_i\times \{0\}$ of $X$ must all lie in some single cube $C\in X$.
Since the cubical neighbourhood $N$ of $H$ is convex, the vertex of $C$ not contained in any $F_i\times\{0\}$ also lies in $N$ and hence $C\subseteq N$.
In particular, this means that the immersed hyperplane $H\hookrightarrow X$ contains all three midcubes $F_i\times \{1/2\}$ of $C$, which is impossible as they pairwise share edges.

Now suppose that there is a nonogon and let $D\hookrightarrow H$ be a minimal disc diagram containing it, so that $\partial D$ is a ring of the nonogon, and all of the vertices on $\partial D$ have degree exactly $3$.
Then \cite[Lemma 3.4]{john2D} implies that any disc quadrangulation with all internal degrees at least 4 has a boundary vertex of degree 2, a contradiction.
\end{poc}
\begin{claim}\label{claim:hyperplanesdonotcross}
$H\cap H'=\emptyset$.
\end{claim}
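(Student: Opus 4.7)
My plan is to argue by contradiction. Suppose $H \cap H' \neq \emptyset$; then $H$ and $H'$ share midcubes in some cube $C$ of $X$ (of dimension at least $2$). The strategy is to exploit this crossing, together with the convexity of the cubical neighbourhoods $N(H)$ and $N(H')$ given by Theorem~\ref{thm:wise}(iii), to construct a walk in $\partial X \setminus v$ connecting a vertex of $A$ to a vertex of $B$. Such a walk would contradict the assumption that $A$ and $B$ are distinct connected components of $\partial X \setminus v$.

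First, using Theorem~\ref{thm:wise}(iv) together with the hypothesis $H \cap \partial X \subseteq A$, I would show that the (connected) set $B$ lies entirely on one side of $H$: indeed, any path on $\partial X$ crossing $H$ would have to pass through $H \cap \partial X \subseteq A$, which is impossible since $B$ is disjoint from $A$ and is connected. Symmetrically, $A$ lies entirely on one side of $H'$. This means that the crossing at $C$ is sandwiched in a controlled way relative to the boundary.

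Next, I would use the convexity of $N(H)$ and $N(H')$, together with the absence of nonogons (the argument of the preceding claim did not rely on anything specific to $H$ and so applies equally to $H'$), to trace a walk on $\partial X$ starting at a boundary edge of $H$ in $A$, moving through $N(H) \cap \partial X$ towards $C$, passing through the convex subcomplex $N(H) \cap N(H')$, and then following $N(H') \cap \partial X$ to a boundary edge of $H'$ in $B$. Convexity guarantees that the boundary traces of the two neighbourhoods stay connected near $C$, while the no-nonogon property prevents the hyperplane traces from pinching off into closed loops before reaching the crossing.

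The hardest part is ensuring that the constructed walk avoids $v$. To handle this, I would analyse $\lk_X(v)$ using the \cato\ flag condition. If the walk were forced to pass through $v$, there would be edges of $X$ dual to $H$ and edges dual to $H'$ both incident to $v$ on $\partial X$. The corresponding incidences would produce adjacent vertices in $\lk_X(v)$, and via the flag condition a $2$-simplex bounded by them would force an edge in $\lk_{\partial X}(v)$ directly linking a neighbour of $v$ in $A$ with a neighbour of $v$ in $B$. This contradicts the assumption that $A$ and $B$ lie in distinct connected components of $\partial X \setminus v$, completing the proof.
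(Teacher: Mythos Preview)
Your approach has a genuine gap. The walk you propose on $\partial X$ is supposed to pass from $N(H)\cap\partial X$ through $N(H)\cap N(H')$ and into $N(H')\cap\partial X$. But the crossing of $H$ and $H'$ may take place entirely in the interior of $X$: the cube $C$ in which they share midcubes need not have any face on $\partial X$, and more generally $N(H)\cap N(H')$ may be disjoint from $\partial X$. Thus there is no reason for $N(H)\cap\partial X$ and $N(H')\cap\partial X$ to meet on the boundary, and convexity of $N(H)$ and $N(H')$ (a statement about geodesics in $X^1$, not about paths in $\partial X$) does not bridge them. Your argument for avoiding $v$ is also incomplete: even on the portions of the walk lying in $N(H)\cap\partial X$ or $N(H')\cap\partial X$, the walk could pass through $v$ without any edge dual to $H$ or $H'$ being incident to $v$, so the link analysis you sketch does not apply.

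The paper's argument is much shorter and works \emph{inside} $H$ rather than on $\partial X$. View $H\cap H'$ as an immersed hyperplane in the $2$-dimensional \cato\ cube complex $H$. Its cubical neighbourhood in $H$ is a chain of faces, so it sits in a disc diagram in $H$. By the previous claim $H$ contains no nonogons, and $H$ is finite, so $H\cap H'$ cannot be a closed curve and must reach $\partial H$. But an edge of $H$ lies on $\partial H$ exactly when the corresponding face of $X$ lies on $\partial X$, so a point of $H\cap H'$ on $\partial H$ is simultaneously a point of $H\cap\partial X\subseteq A$ and of $H'\cap\partial X\subseteq B$, contradicting $A\cap B=\emptyset$. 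The key move you are missing is to let the no-nonogon property push the intersection $H\cap H'$ out to the boundary of the hyperplane $H$, rather than trying to link $A$ and $B$ along $\partial X$.
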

\begin{poc}
Suppose not. Then, since the cubical neighbourhood of $H\cap H'$ in $H$ is a sequence of faces where consecutive ones share edges, there is a disc diagram $D\hookrightarrow H$ which contains $H\cap H'$. 
As $H$ has no nonogons and is finite, $H\cap H'$ must intersect $\partial H$ non-trivially, contradicting that $H'\subseteq B$.
\end{poc}
Let $a\in A$ and $b\in B$ be neighbours of $v$ and suppose for contradiction that there is an $(a,b)$-path $\gamma \subseteq X^1\setminus v$.
As $X$ is contractible, the closed loop formed by $\gamma$ together with $v$ bounds a disc diagram $D\hookrightarrow X$.
Let $F_1,\dotsc, F_k$ and $av=e_0,\dotsc,e_k=bv$ be the clockwise ordering of faces and edges, respectively, incident to $v$ in $D$.
For each $0\leq i\leq k$ write $H_i$ for the hyperplane corresponding to the midcube of edge $e_i$.
Then, $H_{i-1}$ crosses $H_{i}$ in face $F_i$ for each $i=1,\dotsc, k$.
Since the restrictions to the boundary of $H_0$ and $H_k$ lie in $A$ and $B$ respectively, this contradicts Claim~\ref{claim:hyperplanesdonotcross} (with $H\coloneqq H_0$ and $H'\coloneqq H_k$).
\end{proof}

\textit{Reduction}.
For this step, we use a simple fact from algebraic topology which we prove for completeness. An alternative argument using van Kampen's theorem is also possible.
\begin{lemma}\label{lem:wedge}
Let $X= \bigwedge_i X_i$ be a wedge of CW complexes $X_i$ with common point $x_0$.
Then if $X$ is contractible, so is each $X_i$.
\end{lemma}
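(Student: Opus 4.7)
The plan is to exhibit each $X_i$ as a retract of $X$ and then appeal to the standard fact that any retract of a contractible space is contractible.

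First I would construct the retraction $r_i\colon X\to X_i$ as follows. Recall that $X=\bigwedge_j X_j$ is the quotient of the disjoint union $\bigsqcup_j X_j$ by the identification of all basepoints to the single point $x_0$. Define a map on the disjoint union by sending $X_i$ to itself via the identity and every other $X_j$ ($j\neq i$) to the constant map $x_0\in X_i$. These maps agree on the set of basepoints, so they descend to a continuous map $r_i\colon X\to X_i$ on the quotient. Writing $\iota_i\colon X_i\hookrightarrow X$ for the natural inclusion of the $i$-th wedge summand, one sees immediately that $r_i\circ \iota_i = \mathrm{id}_{X_i}$, so $r_i$ is a retraction.

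Next I would use contractibility of $X$ to transport a null-homotopy to $X_i$. Since $X$ is contractible, there exists a homotopy $H\colon X\times I\to X$ with $H(x,0)=x$ and $H(x,1)=x_0$ for all $x\in X$. Consider the composition
\[
  K \;\coloneqq\; r_i\circ H\circ (\iota_i\times \mathrm{id}_I)\colon X_i\times I \longrightarrow X_i.
\]
Then $K(x,0)=r_i(\iota_i(x))=x$ and $K(x,1)=r_i(x_0)=x_0$, so $K$ is a homotopy from $\mathrm{id}_{X_i}$ to the constant map at $x_0$, establishing that $X_i$ is contractible.

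There is no real obstacle here: the only thing to verify carefully is that the retraction $r_i$ is well-defined and continuous, which follows from the universal property of the quotient topology defining the wedge. Everything else is formal manipulation of the null-homotopy.
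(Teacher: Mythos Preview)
Your proof is correct and, in fact, more elementary than the paper's. The paper argues via homotopy groups: the inclusion $\bigwedge_i X_i \hookrightarrow \prod_i X_i$ induces a surjection $\pi_n\bigl(\bigwedge_i X_i\bigr) \to \pi_n\bigl(\prod_i X_i\bigr) \cong \bigoplus_i \pi_n(X_i)$ for every $n$, so triviality of the left-hand side forces each $\pi_n(X_i)=0$; contractibility of $X_i$ then follows (implicitly) from Whitehead's theorem, using that the $X_i$ are CW complexes. Your retraction argument bypasses both the homotopy-group computation and Whitehead's theorem entirely, and indeed shows the stronger statement that each wedge summand of \emph{any} contractible space is contractible, with no CW hypothesis needed. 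The paper's route is perhaps more in the spirit of the ambient algebraic-topology toolkit, but yours is the cleaner proof of this particular lemma.
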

\begin{proof}
The map $\pi_n(\bigwedge_i X_i)\rightarrow \pi_n(\prod_i X_i)\simeq \bigoplus_i \pi_n (X_i)$ induced by inclusion is surjective for each $n$.
By assumption the term on the left hand side is trivial and thus each $\pi_n(X_i)$ is trivial as well for each $n$.
\end{proof}

Suppose that $v$ is a cut-vertex. Then $X$ can be written as the wedge $\bigwedge_i X_i$ of finitely many subcomplexes $X_i$ whose pairwise intersection is $\{v\}$.
By Lemma~\ref{lem:wedge}, each $X_i$ is contractible.
Since each connected component of $\lk_X(v)$ is a flag complex, so is each $\lk_{X_i}(v)$ and thus each $X_i$ is \cato.

It is clear that $\partial X_i\subseteq \partial X$ and $X$ can be reconstructed from its subcomplexes $X_i$.
Additionally, if vertices $x,y\in \partial X$ lie in the same subcomplex $X_i$, then $d_{X_i}(x,y) = d_{X}(x,y)$ as any $x,y$-path in $X$ using a vertex not in $X_i$ can be shortened to a path using only vertices in $X_i$.

\subsection{Removing corners of faces}
\textit{Recognition}. If $X$ has no cut-vertices, a vertex $v\in \partial X^0$ is the corner of a face precisely if $d_{\partial X}(v) = 2$.
Indeed, if $v$ is a corner of a face then $d_{\partial X}(v)=2$ by definition.
Conversely, if $d_{\partial X}(v) = 2$ then by Lemma~\ref{lem:basic} \ref{lem:basic2}, $v$ is not contained in any cube of $X$ and so by Lemma~\ref{lem:basic} \ref{lem:basic3}, $\lk_{\partial X}(v)\simeq \lk_X(v)$ and therefore is a single edge.

\textit{Reduction}.
Suppose that $v$ is a corner of a face $C$. Note that $C$ must be free, and hence $C$ is in $\partial X$.
Remove all cells containing $v$ to obtain a proper subcomplex $Y\subseteq X$.
As $X$ deformation retracts onto $Y$ and $X$ is contractible, $Y$ is contractible. 
For a vertex $u$ not in $C$, we have $\lk_Y(u)= \lk_X(u)$ which is therefore a flag complex. 
For a vertex $w$ in $C$, $\lk_Y(w)$ is obtained from $\lk_X(w)$ by removing either a degree 1 vertex together with its incident edge, or an edge not contained in any triangle. 
In either case, the resulting complex $\lk_Y(w)$ is flag and so $Y$ is \cato.
It is clear that $X$ can be reconstructed from $Y$. We also have that $\partial Y\subseteq \partial X$ because all vertices of $C$ are in $\partial X$. Moreover, $d_{Y}(x,y)=d_{X}(x,y)$ for any $x,y\in \partial Y^0$. To see this, let $u$ be the vertex of $C$ not adjacent to $v$ and note that $v$ can be replaced by $u$ in any shortest $(x,y)$-path in $X$ without changing its length. 

\subsection{Removing vertices of degree 3 not in a cube}
\textit{Recognition}.
Suppose $X$ has no cut-vertices nor vertices of degree $2$, and $v\in\partial X^0$ has degree $3$.
Then $v$ is not in a cube of $X$ if and only if two of its neighbours do not have a common neighbour different from $v$.
Indeed, since $X$ has no cut-vertices, it follows from Lemma~\ref{lem:basiclinks} \ref{lem:basiclinks1} that $\lk_{\partial X}(v)$ is connected and is therefore either a path with $2$ edges or a triangle.
The former case occurs precisely when $v$ is the unique common neighbour of two of its neighbours, and in the latter case, by Lemma~\ref{lem:basiclinks} \ref{lem:basiclinks2}, $v$ is in a cube.

\begin{figure}[ht]
    \centering
    \includegraphics[scale=0.8]{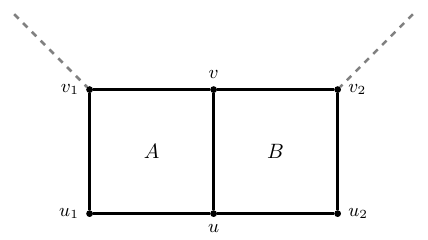}
    \caption{Pattern corresponding to a vertex $v$ of degree $3$ not in a cube.}
    \label{fig:2squares}
\end{figure}

\textit{Reduction}.
Suppose $X$ has no cut-vertices nor vertices of degree $2$, and that $v\in \partial X^0$ is a vertex of degree $3$ not in a cube. This forces the cells containing $v$ to form them pattern depicted in Figure~\ref{fig:2squares}, where all vertices in the figure are on $\partial X$.
Let $Y\subseteq X$ be the proper subcomplex of $X$ obtained by removing all cells containing $v$. Using the known pattern that, we can reconstruct $X$ from $Y$.

Since $X$ deformation retracts onto $Y$ via the map collapsing $v$ onto $u$ and the edges $v_1v$, $vv_2$ onto $v_1u_1u$, $v_2u_2u$ respectively, $Y$ is contractible.
Moreover, since $Y$ is obtained from $X$ by removing cells of dimension at most $2$ not contained in any cubes, all links of vertices in $Y$ are flag complexes. 
Hence, $Y$ is a \cato\ cube complex.

Consider vertices $p,q\in \partial Y$; we certainly have $d_{Y}(p,q) \geq d_{X}(p,q)$.
Let $H$ be the unique immersed hyperplane splitting $X$ into parts $X_u$, $X_v$ with $u\in X_u$, $v\in X_v$. Note that we can recognise the part to which each vertex on $\boundary X$ belongs from the known distance matrix.
By convexity of the neighbourhood of $H$, $p\in X_u$ if and only if $d_X(p,u)<d_X(p,v)$. There are three cases to consider to recover distances exactly.
\begin{enumerate}
\item If $p,q\in X_u$, then since hyperplanes are convex (Theorem~\ref{thm:wise} \ref{thm:wise3}), no shortest $p,q$-path uses $v$.
Hence, $d_{Y}(p,q) = d_{X}(p,q)$.

\item If $p\in X_u, q\in X_v$, we can assume $d_X(p,q)=d_X(p,v)+d_X(v,q)$ as every shortest $p,q$-path otherwise misses $v$.

Since $v,q\in X_v$, either $d_X(v,q)=1+d_X(v_1,q)$ or $d_X(v,q)=1+d_X(v_2,q)$; assume without loss of generality the latter. As $H$ is the only hyperplane separating $u_2$ and $v_2$, we have $d_X(p,u_2)=d_X(p,v_2)-1=d_X(p,v)$ and $d_X(u_2,q)=d_X(v_2,q)+1=d_X(v,q)$. Thus, there is a shortest $(p,q)$-path containing $u_2$. This path does not contain $v$, since a shortest $(p,u_2)$-path lies entirely within $X_u$ and $v$ is too far from $q$ to be on a shortest $(u_2,q)$-path.

\item Finally, suppose that $p, q\in X_v$. To proceed, we claim that every shortest $(p,q)$-path in $X_v$ is a shortest path in $X$. This holds since if $\rho$ is a $(p,q)$-path that intersects $X_u$ in a subpath $\rho'_u$, the convexity of the tubular neighbourhood of $H$ means that $\rho'_u$ is contained in this neighbourhood. Then we can replace $\rho'_u$ by a projection of the same length $\rho'_v$ in $\rho$, and this produces a $(p,q)$-path contained in $X_v$ that is shorter than $\rho$. 

Now note that $v$ is a cut-vertex in $X_v$. If $p$ and $q$ are in the same component of $X_v\backslash v$ (this can again be recognised from the distance matrix for $\boundary X$), then the shortest $p,q$ path in $X_v$ avoids $v$. The preceding claim then implies that $d_Y(p,q) = d_X(p,q) = d_X(p,q)$. 

So suppose that $p$ and $q$ are in different components of $X_v\backslash v$, meaning a shortest $(p,q)$-path in $X$ uses $v$. By Theorem~\ref{thm:wise} $H$ is simply connected, so by van Kampen's theorem (see \cite[Theorem~1.20]{hatcher}), $X_v$ is also simply connected. Furthermore, since $v$ is not contained in any face in $X_v$, any $(v_1,v_2)$-path $P$ avoiding $v$ would form a non-trivial loop with the path $v_1vv_2$, which is impossible.
Hence, any $(p,q)$-path $\gamma$ in $X^1$ avoiding $v$ -- i.e.\ a path in $Y^1$ -- must use at least one vertex outside $X_v$. 

Let $xy$ be an edge of $\gamma$ with $x\in X_v$, $y\in X_u$. 
Then, again by the convexity of the tubular neighbourhood of $H$, $d_X(p,y) \geq d_X(p,x)+1$ and $d_X(y,q) \geq d_X(x,q)+1$. 
Hence, $|\gamma| \geq d_X(p,y) + d_X(y,q) \geq d_X(p,x) + d_X(x,q) +2$.
In particular, taking $\gamma$ to be a shortest path allows us to conclude that $d_{Y}(p,q) \geq d_{X}(p,q) +2$.
In fact, by replacing $v$ by $u_1uu_2$ in $\gamma$, we see that $d_{Y}(p,q) = d_X(p,q)+2$.
\end{enumerate}

\section{Rows of cubes}\label{sec:rowsofcubes}
A \defn{row of cubes of length $k$} in $X$ is a tuple of cubes $(C_1,\dotsc, C_k)$ from $X$ where non-consecutive cubes are disjoint, and for each $i=2,\dotsc, k-1$, there are opposite faces $F$, $F'$ of $C_i$ such that $C_{i-1}\cap C_i = F$ and $C_i\cap C_{i+1}=F'$.
If moreover, there is a path $p_0\dotsb p_k$ such that $\deg_X(p_0)=\deg_X(p_k)=3$ and $\deg_X(p_i) = 4$ for $i=1,\dotsc,k-1$, and additionally $p_0\in C_1$, $p_k\in C_k$ and $p_i\in C_{i-1}\cap C_i$ for each $i=2,\dotsc,k$, then we say that $(C_1,\dotsc, C_k)$ is \defn{on the boundary}. In particular, this implies that $p_0, \ldots, p_k$ are boundary vertices.

In this section, we describe the recognition and reduction steps for rows of cubes on the boundary.
For the former, we will introduce so-called `row configurations' and `good row configurations' in Section~\ref{sec:scs}.
As will follow from Lemma~\ref{lem:scrowcorrespondance}, when $X$ is a contractible clean \cato\ cube complex, good row configurations in $G(X)$ correspond to rows of cubes on the boundary.
Their existence in clean contractible \cato\ cube complexes will be discussed in Section~\ref{sec:thethickening}.
The latter step is detailed in Section~\ref{sec:removingrowofcubes}.

\subsection{Row configurations}\label{sec:scs}
\begin{definition}
A \defn{row configuration of length $k$} in a graph $H=(V,E)$ is a tuple of labelled vertices 
\[
( a_i, p_i, b_i \colon i=0,\dotsc, k) 
\]
for some $k\geq 1$ such that
$\deg_{H} (p_0)=3$,
$\deg_{H} (p_k)\neq 4$,
$\deg_{H} (p_i) = 4$ for $i=1,\dotsc,k-1$,
all edges (referred to as the \defn{edges of $\mathcal{C}$})
\begin{align*}
a_ip_i, b_ip_i, \qquad &i = 0,\dotsc, k;\\
a_{i-1}a_i, b_{i-1}b_i, p_{i-1}p_i, \qquad &i=1,\dotsc,k;
\end{align*}
are in $E$, the $p_i$ are distinct and $(a_i)$, $(b_i)$ are distinct sequences. As it turns out, these sequences do not intersect in row configurations on the boundary of \cato\ cube complexes.
The \emph{oriented} path $p_0p_1\dotsc p_k$ is called the \defn{spine} of $\mathcal{C}$ and we refer to $p_0$ and $p_k$ as the \defn{start vertex} (which always has degree 3) and \defn{end vertex} (which always has degree at least 3), respectively. 
We identify row configurations with the same spine, i.e.\ those of the form
\begin{align*}
& (a_i, p_i, b_i \colon i=0,\dotsc, k) \\
& (b_i, p_i, a_i \colon i=0,\dotsc, k).
\end{align*}
We refer to row configurations with end vertex of degree $3$ as \defn{good row configurations} or good configurations for short.
When $X$ is a cube complex, we will say `a (good) row configuration in $X$' to mean a (good) row configuration in the graph $\partial X^1$.
\end{definition}

\begin{figure}[ht]\label{fig:sc}
	\centering
        \includegraphics[scale=0.8]{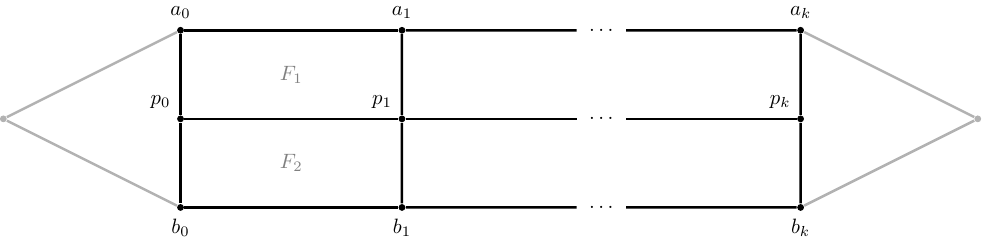}
	\caption{Row configuration.}
\end{figure}

When $Y$ is a clean \cato\ cube complex, row configurations in $G(Y)$ enjoy multiple useful properties: we show that all edges of a row configuration in $G(Y)$ are in fact edges of $(\partial Y)^1$ and that a row configuration corresponds to a row of cubes. 

\begin{lemma}\label{lem:scrowcorrespondance}
Let $Y$ be a clean \cato\ cube complex. 
Then a row configuration of length $k$ in $(\partial Y)^1$ or $G(Y)$ corresponds to a row of cubes of length $k$ on the boundary of $Y$.
Namely, given a row configuration
$\mathcal{C}\coloneqq (a_i, p_i, b_i \colon i=0,\dotsc, k)$ in $(\partial Y)^1$ or $G(Y)$,
there is a row of cubes $\mathcal{R}\coloneqq(C_1,\dotsc, C_k)$ in $Y$ such that the faces bounded by $a_{i-1},a_i,p_i, p_{i-1}$ and $p_{i-1},p_i, b_{i},b_{i-1}$ are faces of $C_i$ for each $i=1,\dotsc, k$, and all edges of $\mathcal{C}$ are in $(\partial Y)^1$.
Further, this row of cubes is uniquely determined by $k$ and the vertices $p_0$, $p_1$.
We refer to $\mathcal{R}$ as the \defn{underlying row of cubes} of $\mathcal{C}$.
\end{lemma}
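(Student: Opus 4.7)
The plan is to construct the cubes $C_1, \ldots, C_k$ inductively along the spine, using Lemma~\ref{lem:squaresarefilled} to produce the two side faces of each cube and the flag condition on vertex links to assemble them into cubes. First, the degree and distinctness conditions on a row configuration ensure that the four $H$-neighbors $a_i, b_i, p_{i-1}, p_{i+1}$ of an interior $p_i$ are distinct, and similarly at the endpoints. Consequently each of the cycles $p_{i-1}a_{i-1}a_ip_i$ and $p_{i-1}b_{i-1}b_ip_i$ is a genuine $4$-cycle in $Y^1$, so by Lemma~\ref{lem:squaresarefilled} it bounds a unique face of $Y$; denote these faces $F_i^a$ and $F_i^b$. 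These will be the two ``side walls'' of the eventual cube $C_i$, meeting along the spine edge $p_{i-1}p_i$.

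For the base step, I show $p_0$ is the corner of a unique cube $C_1$. Since $\deg_H(p_0)=3$ and $Y$ is clean, $\lk_{\partial Y}(p_0)$ is connected (else $p_0$ would be a cut-vertex by Lemma~\ref{lem:basiclinks} \ref{lem:basiclinks1}) and has three vertices, so it is either a $3$-cycle or a path with two edges. The path case is ruled out by Lemma~\ref{lem:basic} \ref{lem:basic3}, which would give $\lk_Y(p_0)=\lk_{\partial Y}(p_0)$, making $p_0$ a degree-$3$ vertex not in any cube and contradicting cleanliness. Hence $\lk_{\partial Y}(p_0)$ is a $3$-cycle, and Lemma~\ref{lem:basiclinks} \ref{lem:basiclinks2} identifies $p_0$ as a corner of a unique cube $C_1$. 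The faces $F_1^a, F_1^b$ coincide with the two faces of $C_1$ containing $p_0p_1$, and $a_1, b_1$ are the two vertices of $C_1$ diagonally opposite $p_0$ in these respective faces. For the inductive step, suppose $C_1,\ldots,C_i$ have been constructed as expected. At the interior vertex $p_i$ (with $\deg_H(p_i)=4$), writing $e^*$ for the vertex of $\lk_Y(p_i)$ corresponding to an edge $p_ie$, the cube $C_i$ already provides a $2$-simplex on the vertices $a_i^*, b_i^*, p_{i-1}^*$, so in particular $a_i^*b_i^*$ is an edge in the link. The faces $F_{i+1}^a, F_{i+1}^b$ contribute the edges $a_i^*p_{i+1}^*$ and $b_i^*p_{i+1}^*$; hence $a_i^*, b_i^*, p_{i+1}^*$ form a triangle in $\lk_Y(p_i)^1$. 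By the flag condition this triangle spans a $2$-simplex, which corresponds to a cube $C_{i+1}$ containing $p_i, p_{i+1}, a_i, b_i$; since $p_{i-1}\neq p_{i+1}$ we have $C_{i+1}\neq C_i$, and $a_{i+1}, b_{i+1}$ are read off $C_{i+1}$'s vertex set.

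It remains to verify the row structure, the boundary claim, and uniqueness. Consecutive cubes $C_i, C_{i+1}$ meet in the face at $p_i$ containing $a_i, b_i$, which is opposite to $C_{i-1}\cap C_i$ in $C_i$ by the combinatorial structure of $C_i$. Non-consecutive cubes are disjoint: any shared vertex would force a coincidence among the $p_j$, $a_j$, or $b_j$, contradicting the distinctness assumptions. For the boundary claim, at each interior $p_i$, Lemma~\ref{lem:basiclinks} \ref{lem:basiclinks3} applies since $\deg_{G(Y)}(p_i)=4$ and $p_i$ lies in a cube; it yields $\deg_{\partial Y}(p_i)=4$, so all four $H$-edges at $p_i$ lie on $\partial Y$. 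The endpoint edges at $p_0$ (and at $p_k$ in the good case) are handled similarly, and the horizontal edges $a_{i-1}a_i, b_{i-1}b_i$ belong to $F_i^a, F_i^b$, which are on $\partial Y$ because any further cube on their non-$C_i$ side would inflate the degree at some $p_j$. Uniqueness from $(k, p_0, p_1)$ is immediate: the corner $p_0$ determines $C_1$, and each subsequent $C_{i+1}$ is then forced by the link construction at $p_i$. I expect the main obstacle to be the base step, where one must carefully reconcile the three degree notions $\deg_H, \deg_{\partial Y}, \deg_Y$ using cleanliness, especially in the $H=G(Y)$ case where $p_0$ could a priori have interior edges that do not appear in $H$.
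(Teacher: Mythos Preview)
Your approach is essentially the paper's: induct along the spine, pin down $p_0$ as a cube corner via cleanness, then step forward using the local structure at each $p_i$. The only real difference is packaging at the inductive step---where you assemble $C_{i+1}$ by hand from the flag condition on $\lk_Y(p_i)$, the paper instead invokes the full classification in Lemma~\ref{lem:basiclinks}\ref{lem:basiclinks3} (specifically the fourth bullet: $p_i$ lies in exactly two cubes, each contributing two of the four boundary faces at $p_i$), which simultaneously yields $C_{i+1}$ and the fact that $F_{i+1}^a,F_{i+1}^b\subset\partial Y$.

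Two places where your write-up is looser than it should be. First, in the base step you cite Lemma~\ref{lem:basiclinks}\ref{lem:basiclinks1} in the wrong direction (it says cut-vertex $\Rightarrow$ disconnected link, not the converse); the fix is simply to observe that any three-vertex non-triangle is acyclic and invoke Lemma~\ref{lem:basic}\ref{lem:basic3} directly, as you already do for the path case. Second, your ``inflate the degree'' argument for $F_i^a,F_i^b\subset\partial Y$ does not close as stated: a second cube on $F_i^a$ could in principle add an \emph{interior} neighbour of $p_{i-1}$ in $Y$, which would not raise $\deg_H(p_{i-1})$ for either choice of $H$. This is exactly where the paper's use of the full Lemma~\ref{lem:basiclinks}\ref{lem:basiclinks3} classification pays off, and you should route through it too. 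The paper also pauses to verify $a_i\neq b_i$ (if $a_0=b_0$ then the two faces at $p_0$ share three vertices and hence coincide, forcing $(a_i)=(b_i)$), which you assert from ``distinctness conditions'' without argument.
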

Note that a single row of cubes may be the underlying row of cubes of multiple row configurations in $Y$ (e.g. when $Y$ is a single cube).
However, such row configurations are uniquely determined by the first edge of their spine.
\begin{proof}
Suppose that $\mathcal{C}$ is a row configuration in $G(Y)$.
Since $Y$ is clean $p_0$ is in a cube, so by Lemmas~\ref{lem:basic} \ref{lem:basic2} and \ref{lem:basiclinks} \ref{lem:basiclinks2}, $p_0$ is the corner of a cube $C_1$ in $Y$ and moreover edges incident to it in $G(Y)$ are present in $(\partial Y)^1$.
In particular, $C_1$ has faces $F_1$, $F_2$ containing vertices $\{a_0, a_1, p_1, p_0\}$ and $\{b_0, b_1, p_1, p_0\}$, respectively. 
If $a_0=b_0$, then $F_1$ and $F_2$ share three vertices and so, as $Y$ is \cato, $F_1=F_2$ and in particular $a_1=b_1$. Continuing in this way we see that $(a_i)=(b_i)$, a contradiction. In a similar way, if there is some $0<i\leq k$ with $a_i=b_i$ we can see that $a_0=b_0$, leading to a contradiction.
So without loss of generality, $a_0\neq b_0$.
As $p_0$ has degree $3$ in $Y$, both $F_1$ and $F_2$ are in $\partial Y$ and so $a_0a_1$, $b_0b_1$ are edges of $(\partial Y)^1$.

If $\deg_{G( Y)}(p_1)\neq 4$, we are done.
Otherwise, $\deg_{G( Y)}(p_1)=4$ and faces $F_1$, $F_2$ both contain $p_1$ and are faces of the same cube $C_1$. Furthermore, $\partial Y^1$ contains the $4$-cycles $p_1a_1a_2p_2$ and $p_1b_1b_2p_2$, which by Lemma~\ref{lem:squaresarefilled} must each span faces of $Y$, and so $\lk_{\partial Y}(p_1)$ contains a $4$-cycle. The only possibility from Lemma~\ref{lem:basiclinks} \ref{lem:basiclinks3} consistent with this is that there is some cube $C_2$ such that $p_1$ is only incident to $C_1$, $C_2$ in $Y$, i.e.\ $C_1$, $C_2$ share a face containing $p_1$.
Moreover, both faces of $C_2$ incident to $p_1$ are on $\partial Y$, and in particular, the edges of these faces are also on $\partial Y$.
Let $\{a_1, a_2, p_2, p_1\}$ and $\{b_1, b_2, p_2, p_1\}$, respectively, be the vertices of the faces of $C_2$ incident to $p_1$.
Continuing in this way we find the desired row of cubes $(C_1,\dotsc, C_k)$ with all edges of $\mathcal{C}$ on $\partial Y$.
\end{proof}
In light of this lemma, there is a one to one correspondence between row configurations in $G(Y)$ and $(\partial Y)^1$: a row configuration in $G(Y)$ is clearly one in $(\partial Y)^1$ since all its edges are in $(\partial Y)^1$, and conversely, given a row configuration $\mathcal{C}$ in $(\partial Y)^1$ labelled as above, we must have $\deg_{\partial Y}(p_i)=\deg_{Y}(p_i)$ for each $i$ and hence $\mathcal{C}$ is a row configuration in $G(Y)$ as well.
Moreover, the underlying row of cubes of a good configuration in $(\partial Y)^1$ is on the boundary, and rows of cubes on the boundary give rise to good configurations in $(\partial Y)^1$.

\subsection{Removing a row of cubes}\label{sec:removingrowofcubes}
Let $\mathcal{C} = (a_i,p_i,b_i\colon i=0,\dotsc,k)$ be a good configuration in $\partial X^1$ and $(C_1,\dotsc, C_k)$ its underlying row of cubes as in Lemma~\ref{lem:scrowcorrespondance}.
We define $Y\coloneqq X- \mathcal{C}$ to be the cube complex obtained from $X$ by removing vertices $p_0,\dotsc, p_k$ as well as all cubes, faces and edges containing them.
For each $i$, we denote the common neighbour of $a_i$ and $b_i$ by $c_i$.
Note that $\partial Y$ has at most $k-1$ new vertices $c_1,\dotsc, c_{k-1}$ and thus has at least one fewer vertex than $\partial X$.

We now check that $Y$ admits an embedding in $\mathbb{R}^3$, is contractible and all links are flag.
The first part is clear: an embedding $X \hookrightarrow \mathbb{R}^3$ induces an embedding $Y \hookrightarrow\mathbb{R}^3$.
The second part is also straightforward as $X$ deformation retracts onto $Y$ and thus has the same homotopy type.
For the third part, observe that for each $i= 1,\dotsc, k-1$, $\lk_{Y} (c_i)$ is obtained from $\lk_{X}(c_i)$ by removing an edge and the only two triangles containing it.
When $i=0$ or $k$ then $\lk_Y(c_i)$ is obtained from $\lk_X(c_i)$ by removing an edge as well as the unique triangle containing it.
In particular, in either case we have that $\lk_Y(c_i)$ is flag, as desired.
A similar, simpler argument shows that all $\lk_Y(a_i)$s and $\lk_Y(b_i)$s are also flag.

It remains to recover the boundary distances.
To do so, we will use the following fact, which essentially says that the distance between a vertex $p$ and a pair of adjacent vertices separated by a hyperplane $H$ determines on which side of $H$ the vertex $p$ lies.
In fact, this lemma holds for general \cato\ cube complexes, with essentially the same proof.
\begin{figure}[ht]\label{fig:hyperplaneedge}
	\centering
        \includegraphics[scale=0.8]{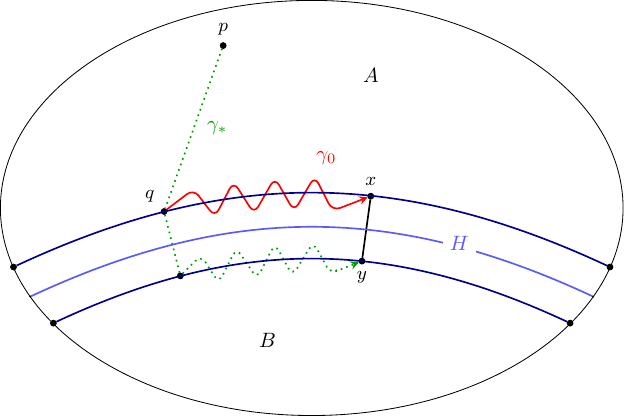}
	\caption{Sketch of proof of Lemma~\ref{lem:adjacentverticesandhyperplanes}. If the dashed path $\gamma_*$ is a geodesic, we can project its intersection with the tubular neighbourhood of $H$ to the side of $A$, giving a subpath $\gamma_0$. This produces a $(p,x)$-path of length at most $d_X(p,y)-1$.}
\end{figure}
\begin{lemma}\label{lem:adjacentverticesandhyperplanes}
Let $x,y$ be adjacent vertices of $X$ and $H\hookrightarrow X$ the immersed hyperplane containing the midcube of the edge $xy$.
Let $A$, $B$ be the connected components of $X\setminus H $, containing $x$ and $y$ respectively.
Then, for every $p\in X$,
\begin{equation*}
d_X(p,y) = d_X(p,x) +1 \Leftrightarrow p\in A.
\end{equation*}
\end{lemma}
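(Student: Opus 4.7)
The plan is to exploit the convexity of the carrier $N(H) = H \times [0,1]$ (Theorem~\ref{thm:wise}(iii)) in order to reroute the last $H$-crossing of a $(p,y)$-geodesic, following the sketch in Figure~\ref{fig:hyperplaneedge}. The easy half is the upper bound $d_X(p,y) \leq d_X(p,x) + 1$, which is immediate from extending any $(p,x)$-geodesic by the edge $xy$.

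The core of the argument will be to show that if $p \in A$, then $d_X(p,x) \leq d_X(p,y) - 1$. Let $\gamma_*$ be a $(p,y)$-geodesic. Since $H$ separates $A$ from $B$ (Theorem~\ref{thm:wise}(iv)) and $p \in A$ while $y \in B$, $\gamma_*$ must contain at least one edge crossing $H$; let $uv$ be the last such edge, with $u \in A$ and $v \in B$. The terminal subpath $\gamma_{\mathrm{end}}$ of $\gamma_*$ from $v$ to $y$ lies entirely in $B$, and both its endpoints lie in $N(H)$, so by convexity $\gamma_{\mathrm{end}}$ is contained in $N(H) \cap B$, which is precisely the $B$-side copy $H \times \{1\}$ of $H$. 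Using the product structure of $N(H)$, I would project $\gamma_{\mathrm{end}}$ vertex-by-vertex along the $[0,1]$-factor to an equal-length path $\gamma_0$ in $H \times \{0\}$; this projection takes $v \mapsto u$ and $y \mapsto x$ because $uv$ and $xy$ are the unique edges of $N(H)$ across $H$ incident to $v$ and $y$ respectively. Concatenating the prefix of $\gamma_*$ from $p$ to $u$ with $\gamma_0$ then yields a $(p,x)$-walk of length $d_X(p,y) - 1$, giving the desired inequality.

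For the reverse implication, if $p \in B$ then the symmetric argument (swapping the roles of $x, y$ and $A, B$) yields $d_X(p,x) = d_X(p,y) + 1$, so $d_X(p,y) \ne d_X(p,x) + 1$. Every vertex of $X$ lies in $A \cup B$ because $H$, being a union of midcubes, contains no $0$-cell of $X$, so these two cases are exhaustive and the biconditional follows.

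I do not anticipate a genuine obstacle: the only delicate point is that the vertex projection between the two copies of $H$ inside $N(H)$ preserves the combinatorial structure of $\gamma_{\mathrm{end}}$ and sends $v \mapsto u$, $y \mapsto x$, which is immediate from the identification $N(H) = H \times [0,1]$ supplied by Theorem~\ref{thm:wise}(iii). The reasoning is a clean instance of the convexity-of-carrier technique already used in the reduction steps of Section~\ref{sec:cleaning}.
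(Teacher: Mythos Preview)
Your proof is correct and follows essentially the same convexity-of-carrier argument as the paper. The only cosmetic difference is that the paper takes the \emph{first} vertex of the geodesic lying in $N(H)$ and projects the entire tail (which may cross $H$ several times) onto $H\times\{0\}$, whereas you take the \emph{last} $H$-crossing and project only the $B$-side tail; both rewrites yield a $(p,x)$-walk of length at most $d_X(p,y)-1$, and the symmetric reverse implication is handled identically.
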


\begin{proof}
Suppose first that $p\in A$.

Given a geodesic path from $p$ to $x$ in $X^1$, we can extend it using the edge $xy$ to form a $(p,y)$-path of length $d_X(p,x) +1$, and hence $d_X(p,y) \leq d_X(p,x) +1$.

For the reverse inequality, consider a geodesic path $\gamma_*$ from $p$ to $y$ in $X^1$.
Let $q$ be the first intersection point of $\gamma_*$ with the tubular neighbourhood $N \coloneqq H\times [0,1]$ of $H$, where $H\times \{0\}\subseteq A$, $H\times \{1\} \subseteq B$.
In particular, if $(1,h) = y$ for $h\in H$ then $(0,h)=x$.
The subpath $\gamma$ of $\gamma_*$ from $q$ to $y$ has both endpoints in the tubular neighbourhood of $H$ and so, by Theorem~\ref{thm:wise}, lies entirely in it.

We show that the $(p,y)$-path obtained from $\gamma_*$ by `pushing' $\gamma$ to $H \times \{0\}$ uses at least one less edge than $\gamma_*$.
More precisely, let $\gamma_0\coloneqq \{ (0,h) \colon (t,h)\in N\}$ be the projection of $\gamma$ to $H \times \{0\}$ and define $\gamma'$ to be $\gamma_*$ from $x$ to $q$ concatenated with $\gamma'$ and $xy$.
The length of $\gamma_0$ is at most the number of edges of $\gamma$ which do not cross $H$, so $|\gamma_0|<|\gamma|$ as $\gamma$ has at least one edge crossing $H$.
In particular, $|\gamma'|< |\gamma|$ and therefore $d_X(p,x) \leq d_X(p,y)-1$, as desired.

Applying the symmetric argument when $p\in B$ yields the desired equivalence.
\end{proof}

We use the labelling described in the beginning of this section. 
Additionally, write $N$ for the hyperplane of $X$ containing the midcube of edge $b_ip_i$.

\begin{lemma}\label{lem:recoverdistances}
Given the matrix $D_{\partial X}$ of distances in $X$ between vertices of $\partial X$, we can deduce the matrix $D_{\partial Y}$ of distances in $Y$ between vertices of $\partial Y$.
\end{lemma}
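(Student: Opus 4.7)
The plan is to compute $d_Y(x,x')$ for every pair $x,x'\in \partial Y$, splitting into two cases according to whether each endpoint is ``old'' (in $\partial X\cap \partial Y$) or ``new'' (one of the $c_i$'s for $1\le i\le k-1$ that was not previously on the boundary). I will prove the central claim $d_Y = d_X$ on $\partial Y$, together with a formula recovering $d_X(c_i,\cdot)$ from $D_{\partial X}$ via the hyperplane $N$.

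The first and main step is a geodesic-rerouting argument. Since $\deg_X(p_0)=\deg_X(p_k)=3$ and $\deg_X(p_i)=4$ for $0<i<k$, the only neighbours of each $p_i$ in $X$ are $a_i,b_i$ and $p_{i\pm 1}$; the $p$-spine is thus connected to the rest of $X$ exclusively through the $a_i$'s and $b_i$'s. Given any $X$-geodesic $\gamma$ between $x,x'\in\partial Y$, I decompose it into maximal $p$-subpaths and non-$p$-subpaths. Each maximal $p$-subpath has the form $p_{i_1}\cdots p_{i_2}$, and is entered from $a_{i_1}$ or $b_{i_1}$ and exited at $a_{i_2}$ or $b_{i_2}$. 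If entry and exit are on the same side (both $a$ or both $b$), rerouting $a_{i_1}p_{i_1}\cdots p_{i_2}a_{i_2}$ as $a_{i_1}a_{i_1+1}\cdots a_{i_2}$ is strictly shorter, contradicting geodesicity. In the only remaining case, say entry $a_{i_1}$ and exit $b_{i_2}$, I replace $a_{i_1}p_{i_1}\cdots p_{i_2}b_{i_2}$ (of length $|i_2-i_1|+2$) by $a_{i_1}a_{i_1+1}\cdots a_{i_2}c_{i_2}b_{i_2}$ of the same length. All edges used lie in $Y^1$ since only cells containing some $p_j$ were deleted. Iterating over all $p$-subpaths produces a walk in $Y$ of length $d_X(x,x')$, so $d_Y(x,x')=d_X(x,x')$.

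The second step handles a new $c_i$. Its incident edge $a_ic_i$ has midcube lying in the hyperplane $N$, so $a_i$ and $c_i$ lie on opposite sides of $N$. Applying Lemma~\ref{lem:adjacentverticesandhyperplanes} to the edge $a_ic_i$ yields, for any $v'\in X$,
\[ d_X(c_i,v') = d_X(a_i,v') + 1 \text{ if } v'\in X_{N,a},\quad d_X(c_i,v') = d_X(a_i,v') - 1 \text{ otherwise.} \]
The side of $N$ containing $v'\in\partial X$ is itself read off $D_{\partial X}$ by applying the same lemma to the edge $p_ib_i$ (also crossing $N$): $v'\in X_{N,a}$ iff $d_X(v',b_i)=d_X(v',p_i)+1$. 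For the distance between two new vertices $c_i,c_j$, a double application of the same principle gives $d_X(c_i,c_j)=d_X(a_i,a_j)$, a quantity already known. Finally, the rerouting argument of the previous paragraph applies verbatim to $X$-geodesics starting at $c_i$ (still entering the spine only via $a$- or $b$-vertices), yielding $d_Y(c_i,v')=d_X(c_i,v')$.

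The main technical hurdle is the rerouting step, in particular checking that the case analysis is exhaustive and that the replacement edges $a_ia_{i+1}$, $b_ib_{i+1}$, $a_ic_i$, $b_ic_i$ are all present in $Y$. The former uses the degree hypothesis on the spine (so that any $p$-subpath of a geodesic is flanked by $a$'s or $b$'s rather than arbitrary neighbours of $p_i$), while the latter follows from the fact that the removal only deletes cells containing some $p_j$, together with the identification of the underlying row of cubes from Lemma~\ref{lem:scrowcorrespondance}.
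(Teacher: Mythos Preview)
Your proof is correct and follows essentially the same two-step strategy as the paper: first show $d_Y=d_X$ on $\partial Y$ by rerouting $X$-geodesics away from the spine, then recover $d_X(c_i,\cdot)$ from $D_{\partial X}$ via Lemma~\ref{lem:adjacentverticesandhyperplanes} applied to the hyperplane $N$.

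The one notable difference is in the rerouting step. The paper does this in a single line: since the neighbours of $p_i$ in $X$ are exactly $a_i,b_i,p_{i\pm1}$ and the neighbours of $c_i$ include $a_i,b_i,c_{i\pm1}$, replacing every occurrence of $p_i$ by $c_i$ in an $X$-geodesic sends each edge $a_ip_i,b_ip_i,p_ip_{i\pm1}$ to the corresponding edge $a_ic_i,b_ic_i,c_ic_{i\pm1}$ of $Y$, yielding a $Y$-walk of the same length. Your case analysis on maximal $p$-subpaths achieves the same conclusion but with more bookkeeping; it is correct, though the ``same side'' case (which you rule out by geodesicity) is in fact harmless under the simpler substitution $p_i\mapsto c_i$, so the contradiction argument is not needed. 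Your treatment of the new--new case $d_X(c_i,c_j)=d_X(a_i,a_j)$ is a nice explicit addition that the paper leaves implicit.
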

\begin{proof}
First, we claim that for any vertices $x,y \in \partial Y$, we can find a geodesic path avoiding the removed vertices $\{p_i\colon i=1,\dotsc,k\}$.
Indeed, given $\gamma$ a geodesic $(x,y)$-path in $X$, by replacing each $p_i$ in $\gamma$ by $c_i$ we obtain a walk in $Y$ of the same length.
In particular, for any $x,y \in \partial Y$ we immediately have $d_X(x,y) = d_Y(x,y)$.
These distances are already given by the distance matrix $D_{\partial X}$ for vertices $x,y\in \partial Y\cap \partial X$.

It remains to deduce the distances between boundary vertices and those newly created by the removal process.
For each $i = 1,\dotsc, k-1$, we claim that for any $x\in \partial Y$, the distance to vertex $c_i$ is given by
\[
d_{Y}(c_i, x) = \begin{cases}
d_X(x, a_i)+1 &\text{ if } d_X(x, b_i)  = d_X(x, p_i)+1,\\
d_X(x, a_i)-1 &\text{ if } d_X(x, b_i) = d_X(x, p_i)-1.
\end{cases}
\]
Let $A,B$ be the two connected components of $X\setminus N$.
We apply Lemma~\ref{lem:adjacentverticesandhyperplanes} in $X$ to edges $p_ib_i$ and $a_ic_i$ (whose midcubes are both contained in hyperplane $N$).
On one hand this allows us to determine the component of $x$ as $d_X(x,b_i)$, $d_X(x,p_i)$ are known and
\begin{align*}
d_X(x,b_i) &= d_X(x,p_i) +1 \Leftrightarrow x\in A,\\
d_X(x,b_i) &= d_X(x,p_i) -1 \Leftrightarrow x\in B.
\end{align*}
On the other hand, this information allows us to deduce $d_X(x,c_i)=d_Y(x,c_i)$ as $d_X(x,a_i)$ is known and
\begin{align*}
d_X(x,c_i) &= d_X(x,a_i) +1 \Leftrightarrow x\in A,\\
d_X(x,c_i) &= d_X(x,a_i) -1 \Leftrightarrow x\in B.\qedhere
\end{align*}
\end{proof}

\section{Rows of Cubes II: The Thickening}
\label{sec:thethickening}

The purpose of this section is to overcome the difficulty that our complex $X$ is not necessarily homeomorphic to a ball. We describe a method by which to `thicken' $X$ into a new cube complex, denoted $\thick{X}$, that is homeomorphic to the ball, but whose boundary is sufficiently similar to the original that we can use it to deduce the existence of structures on $X$. Broadly, our thickening process entails gluing a new cube on every face of our cube complex, and then identifying sides of those new cubes to reflect face incidences in $X$. This creates a `shell' of cubes around $X$ which bulks up lower-dimensional free cells and produces a pure contractible cube complex $\thick{X}$, the \defn{thickening of $\thick{X}$}.

It is good to note that although the construction of the thickening of a cube complex $X$ depends on the choice of embedding $X\hookrightarrow \R^3$, this is not an issue when proving an existence statement for a structure in $X$.
 As such, we slightly abuse notation and speak of \emph{the} thickening of $X$ to mean a \emph{fixed instance} of a thickening of $X$.

Let $X$ be a clean \cato\ cube complex and $I=[0,1]$ denote the unit interval. 
Let the \defn{sides} of $X$ refer to faces on $\partial X$ counted with multiplicity, so that free faces are counted twice. 
For each face $F\in \partial X$, let $\sides(F)$ be the collection of its associated sides, namely a multiset $\{F, F\}$ if $F$ is free and the singleton set $\{F\}$ otherwise.
 Write $\mathcal{F} \coloneqq \bigcup_F \sides(F)$ for the multiset of sides over all faces of $\partial X$.

In the first step of our thickening process, we associate to each side $S\in \mathcal{F}$ a new cube $C_S= I\times S$ with the natural gluing map $\phi_S\colon \{0\}\times S\rightarrow S\subseteq X$.
We refer to the face $\{1\}\times S\subseteq C_S$ as the \defn{external face} of $C_S$. 
As its name suggests, the external face of $C_S$ will be on the boundary of $\thick{X}$ (which we verify more formally in \cref{claim:thickbasic}).

We define an intermediate complex $X'= X\sqcup \bigsqcup C_S \Big/\bigsqcup \phi_S$, where both disjoint unions are taken over $S\in \mathcal{F}$, namely $X'$ is the complex obtained from $X$ by gluing cubes $C_S$ to each side $S$ of $\partial X$ along maps $\phi_S$.
We write $H_{t,S}\colon S\rightarrow C_S\subseteq X'$ for the map $(x,y)\mapsto (t,x,y)$, so that $H_{0,S}(S) = S\subseteq X$ and $H_{1,S}(S)$ is the external face of $C_S$ embedded in $X'$.

Note that that there exists an embedding of $X'$ in $\R^3$, with $X\hookrightarrow \R^3$ as a subcomplex.
Fix such an embedding; for each edge $e$ of $\partial X$, it induces a cyclic ordering $S_1,\dotsc, S_\ell$ of the sides of boundary faces incident to $e$.
For each $i$, there are two faces of $C_{S_i}$ which are incident to $e$ in $X'$.
One is $\{0\}\times S_i$ and we denote the other $S_i^e$.
See Figure~\ref{fig:thickeningaroundanedge}.
For each $i$, we define gluing maps $\phi_{e,S_i}$ between $S_{i}^{e}$ and $S^{e}_{i+1}$ whenever the points between $S_{i}^{e}$ and $S_{i+1}^{e}$ in the cyclic ordering around the edge $e$ are not in $X$, where indices are considered modulo $k$.

\begin{figure}[t]
    \centering
    \includegraphics[scale=0.8]{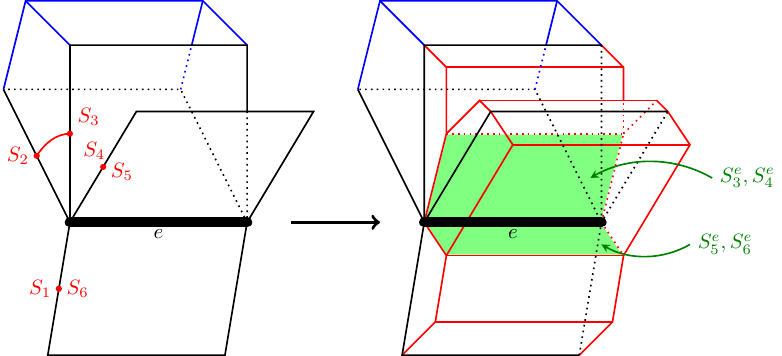}
    \caption{Example of gluing faces of cubes $C_F$ together.}
    \label{fig:thickeningaroundanedge}
\end{figure}
The \defn{thickening} of $X$, denoted $\thick{X}$, is the cube complex $\thick{X} = X'\big/ \bigsqcup \phi_{e,S}$ obtained from $X'$ by gluing cubes associated to consecutive sides in the cyclic ordering around each edge $e\in \partial X^1$ according to gluing maps $\phi_{e,S}$.
For each fixed $t\in [0,1]$ we have the following compositions:
\[
S \xrightarrow{H_{t,S}} C_S \xrightarrow{\phi_S} X' \xrightarrow{\sqcup\phi_{e,S}} \thickening,
\] 
where the union in the last map is taken over each edge $e$ of $S$.
The embedding of $X'$ in $\mathbb{R}^3$ allows us for each fixed $t\in [0,1]$ to extend these compositions to a continuous map $H_{t}\colon L\coloneqq \bigsqcup S\Big/\bigsqcup \phi_S \sqcup \bigsqcup \phi_{e,S} \rightarrow \thickening$. 
Moreover, these maps are continuous with $t$.
Then, $H_0(L) = \partial X$ and $H_1(L) \subseteq \partial \thickening$.
In fact, in Claim~\ref{claim:thickpure} we will show that $H_1(L) = \partial \thickening$, so that $\{H_t\colon t\in I\}$ describes a deformation retraction of $\thickening$ onto $X$.
We define the \defn{projection map} $\pi \colon \partial \thickening \rightarrow \partial X$ as $H_1(x) \mapsto H_0(x)$ for each $x\in L$.
Note that $\pi$ is combinatorial, and that it is well-defined as $H_1\colon L \rightarrow \thick{X}$ is an embedding.

Note that each $x\in \partial \thick{X}^0$ is adjacent to precisely one vertex $\pi (x)$ of $X^0$ in $\thick{X}$. 
We sometimes refer to the cubes $\bigsqcup_{S\in\mathcal{F}} C_S$ as the \defn{new cubes} of $\thick{X}$. 

The following claim is essentially immediate from our construction.
\begin{claim}\label{claim:thickpure}
$\thick{X}$ is a connected contractible pure 3D cube complex.
\end{claim}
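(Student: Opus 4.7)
The plan is to verify the four assertions in turn, using the radial structure of the new cubes $C_S = I\times S$ as the main tool.

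Connectedness and the cube-complex structure are essentially immediate. $X$ is path-connected since every \cato\ space is; each new cube $C_S$ is attached to $X$ along $\{0\}\times S$, and the further edge-gluings only perform additional identifications, so $\thick{X}$ is connected. The gluing maps $\phi_S$ and $\phi_{e,S_i}$ all identify cubical faces via combinatorial isometries (each pair $S_i^e$, $S_{i+1}^e$ consists of two $2$-cubes agreeing on the edge $e$), so $\thick{X}$ inherits the structure of a regular cube complex. The top-dimensional cells come either from $X$ or from one of the new $3$-cubes $C_S$, so $\thick{X}$ is at most $3$-dimensional.

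For purity I want to show that every cell of $\thick{X}$ of dimension less than $3$ lies in some $3$-cube. A cell of $\thick{X}$ is either (i) a cell inherited from $X$ or (ii) a cell of some $C_S$ that is not identified with a cell of $X$; in case (ii) the cell is by construction contained in the $3$-cube $C_S$. In case (i), an interior $2$-cell of $X$ already lies in a $3$-cube of $X$, while a $2$-cell on $\partial X$ or a side of a free face has a new $C_S$ glued onto it. For a $1$- or $0$-cell of $X$, cleanness forces it to lie in some $2$-cell: an edge not contained in any face would make the link of each endpoint disconnected, producing a cut-vertex; the analogous local argument at vertices handles the remaining case. A containing $3$-cube is then supplied by the $2$-cell via the previous step.

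Contractibility I will establish by exhibiting an explicit deformation retraction of $\thick{X}$ onto $X$. Define $R\colon \thick{X}\to\thick{X}$ to be the identity on $X$, and on each new cube to send $(t,s)\in I\times S$ to $(0,s)=\phi_S(s)\in X$. On $\{0\}\times S$ this already agrees with $\phi_S$, and on two adjacent new cubes glued along $I\times e$ both definitions collapse $(t,x)$ with $x\in e$ to the same point of $e\subseteq X$, so $R$ descends to a well-defined continuous retraction. The linear homotopy $(t,s)\mapsto((1-\lambda)t,s)$ along the $I$-coordinate of each $C_S$ gives a deformation retraction $\thick{X}\simeq X$; since $X$ is a finite \cato\ cube complex it is contractible by Cartan--Hadamard, hence so is $\thick{X}$.

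The only point requiring genuine care is the well-definedness of $R$ (equivalently, of the family $H_t$) across the edge-gluings $\phi_{e,S_i}$: one must verify that the radial collapse commutes with the folding identifications around each boundary edge. Once this compatibility is established, the remaining verifications amount to bookkeeping about which $2$-cells of $X$ acquire a new $C_S$ on top of them and the observation that cleanness of $X$ forces every lower-dimensional cell of $X$ to sit inside a $2$-cell.
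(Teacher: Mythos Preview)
Your argument is correct and follows the same approach as the paper: you use the radial collapse along the $I$-factor of each new cube $C_S$ (the paper's maps $H_t$) for contractibility, inherit connectedness from $X$, and use cleanness of $X$ to rule out stray low-dimensional cells for purity. The paper is simply more terse, asserting in one line that cleanness and simple-connectedness leave only free faces as lower-dimensional cells not already in a $3$-cube, whereas you spell out the case analysis and the well-definedness of the retraction across the edge-gluings.
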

\begin{poc}
The maps $\{H_t \colon t\in I\}$ give a deformation retraction of $\thick{X}$ onto $X$, so contractibility of $\thick{X}$ follows from the contractibility of $X$. Connectedness is also inherited directly from the original complex. The assumptions that $X$ is clean and simply connected mean that the only lower-dimensional cells in $X$ that are not in cubes are the free faces. Since the thickening process ensures that each of these is now in a new cube (two, in fact), we have that $\thick{X}$ is pure.
\end{poc}

\begin{claim}\label{claim:thickbasic}
The thickened complex $\thick{X}$ is a 3-manifold with boundary. Moreover, the boundary of $\thick{X}$ is precisely the union of the external faces of all new cubes glued according to restrictions of the maps $\phi_{e,S}$.
\end{claim}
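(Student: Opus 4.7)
My plan is to verify that every point of $\thick{X}$ has a neighborhood homeomorphic to $\R^3$ or $\R^3_+$, by local analysis on each open cell. Points in the interior of a $3$-cell are trivially interior manifold points, so the work is at cells of codimension $\geq 1$. The three-cells of $\thick{X}$ are the cubes of $X$ together with the new cubes $C_S$ for $S\in\mathcal{F}$.

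\textbf{Codimension $1$.} The faces of $\thick{X}$ fall into five classes: (i) interior faces of $X$, flanked by two cubes of $X$; (ii) non-free faces of $\partial X$, flanked by a cube of $X$ and one new cube $C_F$ via $\phi_F$; (iii) free faces $F$ of $X$, flanked by the two new cubes $C_F^{(1)},C_F^{(2)}$ given by the doubled side in $\mathcal{F}$; (iv) the side faces $S_i^e$ of new cubes after the identifications $\phi_{e,S}$; and (v) external faces $\{1\}\times S$. The first three types and the external faces are immediate. The main verification is that each side face $S_i^e$ is identified with exactly one other $S_j^e$. Around an edge $e\in\partial X^1$, the cyclic ordering alternates between sides of boundary faces and regions that are either interior or exterior to $X$. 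For a non-free side, exactly one of its two adjacent regions is exterior (the other being the unique cube of $X$ that contains $S_i$ along $e$), so precisely one gluing $\phi_{e,S_i}$ applies to $S_i^e$. For a free face, whose two copies in $\mathcal{F}$ correspond to the two sides of $F$ in $\R^3$, each copy is again adjacent to exactly one exterior region in the cyclic ordering. Hence every $S_i^e$ is interior with two cubes, and only external faces have a single incident cube.

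\textbf{Codimensions $2$ and $3$.} For edges, I verify that the link is a cycle (interior) or an arc (boundary). Edges in the interior of $X$ retain their link from $X$, which is already a cycle. For a boundary edge $e$ of $X$, the link in $X$ is a union of arcs, and each exterior region in the cyclic ordering around $e$ contributes, via the chain of new cubes glued along $e$ according to the $\phi_{e,S}$, an arc that bridges two consecutive arcs of the link in $X$; so the result is a single cycle and $e$ becomes interior. For new edges of the form $I\times v$ or $\{1\}\times e$, the link in $\thick{X}$ is read directly off the new cubes incident to them and is an arc, so they lie on $\partial\thick{X}$. For vertices, I use Lemma~\ref{lem:basic}: for $v\in X^0\cap\partial X$, the link $\lk_X(v)$ is a disk (since $X$ is clean and three-dimensional), and the new cubes incident to $v$ contribute new $2$-simplices that, together with the cyclic gluings around each edge at $v$, cap off $\lk_X(v)$ into an $S^2$; for a new vertex of the form $(1,v)$, the triangles coming from the new cubes incident to $v$ together with their cyclic gluings form a $D^2$ whose boundary lies on $\partial\thick{X}$.

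Given the above, the second statement is immediate: the boundary of the $3$-manifold $\thick{X}$ consists exactly of those points with non-Euclidean neighborhoods, and these are precisely the external faces $\{1\}\times S$, along with their edges and vertices, identified according to the restrictions of the maps $\phi_{e,S}$.

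The principal obstacle is the vertex case: one must check that the $2$-simplices added to $\lk_X(v)$ from the new cubes fit together to complete the disk into a sphere. This requires a careful matching of the cyclic orderings of sides around each edge at $v$ to the combinatorics of the link, and uses both the cleanness of $X$ (to rule out pathological links in $X$) and the gluing rules on the $\phi_{e,S}$ (to ensure the shell closes up properly).
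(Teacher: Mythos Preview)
Your approach is essentially the same as the paper's: a local verification that each point of $\thick{X}$ has the correct neighbourhood. Your organisation by codimension is arguably more systematic than the paper's, which first characterises $\partial\thick{X}$ directly and then checks the manifold property only at boundary points (handling the rest with a one-line appeal to the embedding in $\R^3$).

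However, there is a genuine error in your vertex case. You claim that for $v\in X^0\cap\partial X$ the link $\lk_X(v)$ is a disk because $X$ is clean. This is false. Cleanness rules out cut-vertices, corners of faces, and degree-$3$ vertices not in a cube, but it does not force the link to be a disk. For instance, take $X$ to be two $3$-cubes sharing exactly one edge $e$; this is a clean \cato\ cube complex embedded in $\R^3$, yet at an endpoint $v$ of $e$ the link $\lk_X(v)$ consists of two triangles meeting at a single vertex --- not a disk, not even a $2$-manifold. Similarly, if $v$ lies on a free face (which cleanness permits, provided all vertices of that face have degree at least $4$), then $\lk_X(v)$ has a pendant edge. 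Your appeal to Lemma~\ref{lem:basic} does not help: part~\ref{lem:basic2} only yields a disk when $\lk_{\partial X}(v)$ is a single cycle, which need not hold.

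The correct argument at a vertex $v\in\partial X$ does not go through the shape of $\lk_X(v)$ at all. Because the gluing maps $\phi_{e,S}$ are defined via the cyclic orderings of sides around each edge in the \emph{fixed embedding} $X\hookrightarrow\R^3$, the new cubes incident to $v$ fill precisely the exterior sectors of a small $\varepsilon$-sphere about $v$ in $\R^3$. Hence $\lk_{\thick{X}}(v)$ is the entire $\varepsilon$-sphere, so is $S^2$, regardless of what $\lk_X(v)$ looked like. This is what the paper means by ``by construction'', and it is also why the construction of $\thick{X}$ genuinely depends on the embedding. Your final paragraph correctly flags the vertex case as the crux, but the route you sketch --- capping off a disk to a sphere --- is the wrong picture; the shell does not cap a disk, it fills the complement of an arbitrary flag subcomplex of $S^2$.
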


\begin{poc}
Let us begin by characterising $\partial \thick{X}$. We first note that all external faces must be on the boundary of $\thick{X}$, as they are on the boundary of every intermediate cube complex as we go through the construction described. For the other direction, we will rule out the possibility that any faces, edges or vertices not in an external face can be on the boundary. To this end, note that any (open) face $F$ of $X$ is either in $\interior(X)$, or it is in $\partial X$ meaning a new cube is glued onto each side in $\sides(F)$ in the thickening process. In both cases, we see that the interior of $F$ is in the interior of $\thick{X}$ from which we conclude that the faces of $\partial \thick{X}$ are the external faces. At the same time, for any edge or vertex to be in $\partial \thick{X}$, it must be part of a face in $\partial \thick{X}$. This follows from the fact that $\thick{X}$ and hence its boundary are pure cube complexes (the latter in two dimensions). As we have just seen that all such faces are external, the claim follows.

We now show that $\thick{X}$ is a manifold. It is certainly second-countable and Hausdorff, so we just need to verify that it is locally Euclidean. Since $X$ is embedded in Euclidean space, any point in the interior of $X$ certainly has a neighbourhood homeomorphic to the open 3-ball. That leaves us to consider points on the boundary of $\thick{X}$. For points $p$ in the interior of a boundary face of $\thick{X}$ and $\eps >0$ sufficiently small, the intersection $B_\eps(p) \cap \thick{X}$ is homeomorphic to the half-ball $\{x_1,x_2,x_3: x_1^2+x_2^2+x_3^2<1, x_1\geq 0\}$ where points with $x_1=0$ map to a disc on the face (this rephrases the fact that an individual cube is homeomorphic to the ball $B^3$ and hence a 3-manifold with boundary). This is also true for points on the interior of an edge on $\partial \thick{X}$, where points with $x_1=0$ map to a disc that intersects the two (distinct, by construction of $\thick{X}$) boundary faces incident to our edge. By our characterisation of $\partial\thick{X}$ in the preceding paragraph, we know that each vertex $v$ on $\partial\thick{X}$ is surrounded by external faces of new cubes, and the link of $v$ is a disc by construction. This tells us that the cone on the link, and hence $B_\eps(v) \cap \thick{X}$, is homeomorphic to the half-ball whose boundary is the union of a disc on $\partial \thick{X}$ and $\lk(v)$. 
\end{poc}

It may be interesting to note that the boundary points of $\thick{X}$ as a manifold are precisely the points of $\partial\thick{X}$.
\begin{corollary}\label{cor:thickenball}
$\partial\thick X$ is homeomorphic to $S^2$.
\end{corollary}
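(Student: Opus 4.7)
The plan is to deduce the corollary from Claims~\ref{claim:thickpure} and~\ref{claim:thickbasic} by a short Euler characteristic computation combined with the classification of closed surfaces. From Claim~\ref{claim:thickbasic}, $\thick{X}$ is a compact $3$-manifold with boundary, so $\partial \thick{X}$ is automatically a closed $2$-manifold (i.e.\ compact, without boundary). From Claim~\ref{claim:thickpure}, $\thick{X}$ is contractible, hence $\chi(\thick{X}) = 1$.

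The next step is to invoke the standard identity $\chi(\partial M) = 2\chi(M)$, valid for any compact odd-dimensional manifold $M$ with boundary. This follows by combining the additivity formula $\chi(M) = \chi(\partial M) + \chi(M,\partial M)$ from the long exact sequence of the pair with the duality $\chi(M, \partial M) = (-1)^{n}\chi(M) = -\chi(M)$ coming from Poincar\'e--Lefschetz duality in dimension $n=3$ (using $\mathbb{Z}/2$ coefficients to sidestep any orientability worries). Solving yields $\chi(\partial M) = 2\chi(M)$, so in our situation $\chi(\partial \thick{X}) = 2$.

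Finally, the classification of closed surfaces pins down the topology: an orientable closed surface of genus $g$ has Euler characteristic $2 - 2g$, attaining $2$ precisely when $g = 0$, while a non-orientable closed surface with $k \geq 1$ cross-caps has Euler characteristic $2 - k \leq 1$. Therefore $\chi(\partial \thick{X}) = 2$ forces $\partial \thick{X} \cong S^2$, as desired.

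There is no substantive obstacle: all the geometric work was packaged into Claims~\ref{claim:thickpure} and~\ref{claim:thickbasic} (establishing that $\thick{X}$ is a contractible compact $3$-manifold with boundary), and the rest is a short invocation of classical facts. A slightly more explicit alternative would be to run the long exact sequence of the pair $(\thick{X}, \partial \thick{X})$ together with Lefschetz duality to compute $H_\ast(\partial \thick{X}) = H_\ast(S^2)$ directly before appealing to the classification, but this just repeats the same input in more verbose form.
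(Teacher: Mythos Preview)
Your argument is essentially the paper's: both deduce from Claims~\ref{claim:thickpure} and~\ref{claim:thickbasic} that $\partial\thick{X}$ is a closed surface, extract homological information from contractibility via Poincar\'e--Lefschetz duality, and finish with the classification of surfaces. The paper phrases the middle step as ``$\partial\thick{X}$ is a homology $2$-sphere,'' which is exactly your ``more explicit alternative.''

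One small gap in the Euler-characteristic shortcut: $\chi(\partial\thick{X})=2$ alone does not force $\partial\thick{X}\cong S^2$ unless you also know $\partial\thick{X}$ is \emph{connected}---for instance $S^2\sqcup T^2$ also has Euler characteristic $2$. Connectedness drops out immediately from the full homology computation you mention at the end (the long exact sequence of the pair together with $H_1(\thick{X},\partial\thick{X})=0$ gives $H_0(\partial\thick{X})\cong\mathbb{Z}$), so your ``alternative'' is in fact needed to close the argument, not merely a more verbose restatement.
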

\begin{proof}
Since $\thick X$ is a 3-manifold with boundary, the boundary $\partial\thick X$ is a surface. Moreover, the contractibility of $\thick X$ means that $\partial\thick X$ is a homology 2-sphere. 
The statement then follows from the classification of surfaces.
\end{proof}

\section{Existence of rows of cubes on the boundary}\label{sec:rowsonboundary}
In this section, we show the existence of good row configurations in clean \cato\ cube complexes.
In particular, Lemma~\ref{lem:scrowcorrespondance} then implies existence of rows of cubes on the boundary.

For this section, let $Y\hookrightarrow \R^3$ be a finite clean \cato\ cube complex and $\thick{Y}$ its thickening.

\subsection{Row configurations in the thickened complex}

Let $v\in \partial\thick{Y}^0$, $x\coloneqq\pi(v)$ and $C_1,\dotsc,C_k$ be the cyclically ordered cubes incident to the edge $vx$ in $\thick{Y}$.
Since $\pi$ is combinatorial, its restriction to $D$ induces a combinatorial map $\pi^*\colon \lk_{\partial \thick{Y}}(v)\rightarrow \lk_{\partial Y}(x)$.

Since $\pi^*$ is combinatorial and only identifies edges corresponding to the two sides of a single face in $Y$, we have the following fact.
\begin{fact}\label{fact:thicklink}
Under above setup, the image of $\pi^*$ is a closed walk $W \subseteq \lk_{\partial Y}(x)$. This walk is formed by identifying either pairs of edges in $\lk_{\partial\thick{Y}}(v)$ or vertices of $\lk_{\partial\thick{Y}}(v)$ together.
\end{fact}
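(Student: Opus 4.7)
The plan is to combine the manifold structure of $\partial \thick{Y}$ with the combinatorial nature of $\pi$. First I would observe that $\lk_{\partial \thick{Y}}(v)$ is a cycle graph. By Claim~\ref{claim:thickbasic}, $\partial \thick{Y}$ is a closed $2$-manifold built as a cube complex out of the external faces $\{1\}\times S$ of the new cubes, glued along edges by the maps $\phi_{e,S}$. Corollary~\ref{cor:thickenball} identifies this manifold with $S^2$, so it is in particular locally a disc at $v$, and its link there (the boundary of a small disc neighbourhood of $v$) is a cycle.

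Next I would deduce that $W \coloneqq \pi^*(\lk_{\partial \thick{Y}}(v))$ is a closed walk. The map $\pi$ is combinatorial by construction, and hence so is its restriction $\pi^*$. A combinatorial map sends each edge of the source to an edge of the target injectively on its boundary, so traversing the cycle $\lk_{\partial \thick{Y}}(v)$ produces a sequence of edges in $\lk_{\partial Y}(x)$ in which consecutive edges share an endpoint and which closes up at its starting vertex; this is precisely a closed walk.

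I would then argue that the only ways in which $\pi^*$ can fail to be injective are through identifying pairs of edges of $\lk_{\partial \thick{Y}}(v)$, or through identifying vertices of $\lk_{\partial \thick{Y}}(v)$. An edge of $\lk_{\partial \thick{Y}}(v)$ corresponds to a face of $\partial \thick{Y}$ containing $v$; by Claim~\ref{claim:thickbasic} every such face is the external face of a unique new cube $C_S$, whose $\pi$-image is the side $S \in \mathcal F$. Hence two distinct edges of $\lk_{\partial \thick{Y}}(v)$ have the same image under $\pi^*$ if and only if they correspond to two distinct sides $S, S' \in \sides(F)$ for the same face $F$ of $\partial Y$, which happens only when $F$ is free; in that case $\sides(F) = \{F, F\}$ has exactly two elements, so such edge identifications occur strictly in pairs. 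The remaining non-injectivity consists of distinct vertices of $\lk_{\partial \thick{Y}}(v)$ (that is, distinct edges of $\partial \thick{Y}$ through $v$) being sent to the same vertex of $\lk_{\partial Y}(x)$, which is the claimed vertex identification.

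The main obstacle is bookkeeping rather than depth: one has to carefully track how the gluings $\phi_S$ and $\phi_{e,S}$ identify vertices and edges of the new cubes around $v$, so as to confirm that every face and every edge of $\partial \thick{Y}$ incident to $v$ really does produce exactly the pieces of $\lk_{\partial \thick{Y}}(v)$ described above. Once this picture around $v$ is in hand, the fact follows directly from $\pi$ being a combinatorial quotient that only collapses pairs of sides coming from free faces.
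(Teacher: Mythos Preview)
Your proposal is correct and essentially unpacks the one-line justification the paper gives immediately before the fact (``$\pi^*$ is combinatorial and only identifies edges corresponding to the two sides of a single face in $Y$''); the paper offers no further proof. Your argument uses exactly the same ingredients --- $\partial\thick{Y}\cong S^2$ so the source link is a cycle, $\pi^*$ is combinatorial, and edge identifications come in pairs from free faces --- just spelled out in more detail.
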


The next lemma essentially states that the projection map $\pi$ is well-behaved at vertices of degrees $3$ and $4$: the former come from vertices of degree $3$ on $\partial Y$ and under suitable assumptions the latter come from vertices of degree $4$.
Moreover, all other vertices of $\partial \thick{Y}$ have degree at least $4$.

\begin{lemma}
Let $v\in \partial \thick{Y}^0$ and $x\coloneqq\pi(v)\in\partial Y$, i.e.\ $vx$ is an edge.
Then,
\begin{enumerate}[noitemsep]\label{lem:thickdegrees}
\item If $\deg_{\partial\thick{Y}}(v)=3$, then $\deg_{Y}(x)=3$. 
Conversely, for every $x\in\partial Y^0$ with $\deg_{\partial Y}(x)=3$, $\pi^{-1}(x)$ is a singleton vertex and $\deg_{\partial\thick{Y}}(\pi^{-1}(v))=3$.
\item If $\deg_{\partial\thick{Y}}(v)\geq 4$, then $\deg_{\partial Y}(x)\geq 4$.
\item Suppose $\deg_{\partial \thick{Y}}(v)=4$ and the faces of $\partial \thick{Y}$ incident to it are, in cyclic order, $F_1, \dotsc, F_4$ with projections $\pi(F_1),\dotsc, \pi(F_4)\subseteq Y$. Then $\pi(F_1),\dotsc, \pi(F_4)$ are distinct and if $\pi(F_1)$, $\pi(F_2)$ lie in a common cube $C$ of $Y$, then $\pi(F_3)$, $\pi(F_4)$ also lie in a common cube $D$ of $Y$, and $\deg_{\partial Y}(x)=4$.
\end{enumerate}
\end{lemma}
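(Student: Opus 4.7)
My plan is to analyze $\pi^{\ast}$ using three key facts. First, $\partial\thick Y\cong S^{2}$ by \cref{cor:thickenball}, so $\lk_{\partial\thick Y}(v)$ is a simple cycle of length $\deg_{\partial\thick Y}(v)$. Second, every old vertex $x\in\partial Y$ lies in the interior of the $3$-manifold $\thick Y$, since every external face of $\partial\thick Y$ is of the form $\{1\}\times S$ and hence misses $x$; this makes $\lk_{\thick Y}(x)\cong S^{2}$, with $\lk_{Y}(x)$ embedded as a $2$-subcomplex and its $S^{2}$-complement a disjoint union of open ``cones'', one per $v\in\pi^{-1}(x)$, each having apex $vx$. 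Third, $\lk_{\partial Y}(x)$ is a simple graph: no loops, because faces of $Y$ are genuine $4$-cycles; no multi-edges, by \cref{lem:squaresarefilled}. The projection $\pi^{\ast}$ then traces a closed walk $W$ of length $\deg_{\partial\thick Y}(v)$ in $\lk_{\partial Y}(x)$, which is exactly the combinatorial boundary of the cone at $v$ in $S^{2}$.

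For Part 1 forward, the length-$3$ walk must be a simple $3$-cycle (no shorter closed walks exist in a loop-free simple graph). Flag on $\lk_{Y}(x)$ upgrades this $3$-cycle to a $2$-simplex $\tau$ coming from a cube $C$ at $x$. In $S^{2}$ the cycle separates $\tau$ from the open cone at $v$, which contains no points of $\lk_{Y}(x)$, so $\lk_{Y}(x)=\tau$ and $\deg_{Y}(x)=3$. For the converse, cleanness plus $\deg_{\partial Y}(x)=3$ forces $x$ to be a corner of a cube, $\lk_{Y}(x)$ is then a single $2$-simplex with boundary a $3$-cycle, and $S^{2}\setminus\lk_{Y}(x)$ is a single open disk, so $|\pi^{-1}(x)|=1$ and $\deg_{\partial\thick Y}(v)=3$. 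Part 2 follows as the contrapositive of this converse, using that cleanness forces $\deg_{\partial Y}(x)\ge 3$.

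For Part 3(a), suppose $\pi(F_{i})=\pi(F_{j})$ for some $i\ne j$. By \cref{fact:thicklink} this is an edge identification, which can only come from both sides of a free face $F$ at $x$, so the walk traverses the edge $F$ at least twice. If the two occurrences of $F$ are non-adjacent in the $4$-walk, tracking its endpoint sequence in $\lk_{\partial Y}(x)$ (which has no loops and no multi-edges) forces either a loop or $\pi(F_{k})=F$ for some $k\ne i,j$, both impossible. If adjacent, the walk bounces on $F$ and returns to its starting vertex, so the remaining length-$2$ sub-walk is closed in a simple graph and forces the other two $\pi(F_{k})$'s to coincide as a second free face $G$; $\lk_{Y}(x)$ then reduces essentially to the two edges $F,G$, which depending on the number of shared vertices at $x$ either disconnects $\lk_{\partial Y}(x)$ (so $x$ is a cut-vertex by \cref{lem:basiclinks}\ref{lem:basiclinks1}) or gives $\deg_{Y}(x)=3$ with $x$ off every cube, contradicting cleanness in either case. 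For Part 3(b), once the $\pi(F_{i})$'s are distinct, the hypothesis that $\pi(F_{1}),\pi(F_{2})$ lie in a common cube $C$ identifies the third face $F^{*}$ of $C$ at $x$ as an edge of $\lk_{Y}(x)$, and flag yields the $2$-simplex $\tau_{C}$ bounded by $\pi(F_{1}),\pi(F_{2}),F^{*}$. The disk bounded by $W$ on the non-cone side of $S^{2}$ contains $\tau_{C}$, and the remaining region has boundary $F^{*},\pi(F_{3}),\pi(F_{4})$: a second application of flag produces $\tau_{D}$ and the cube $D$. The disk-complement argument of Part 1 then forces $\lk_{Y}(x)=\tau_{C}\cup\tau_{D}$, whence $\deg_{\partial Y}(x)=4$.

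The main obstacle is Part 3(a): the cone at $v$ may genuinely wrap around dangling edges of $\lk_{Y}(x)$ from free faces, so it is not a simplicial disk and the walk analysis must be carried out carefully in the ambient $S^{2}$. Cleanness of $Y$—specifically the absence of cut-vertices, corners of faces, and degree-$3$ vertices off any cube—is what rules out the resulting degenerate configurations at $x$.
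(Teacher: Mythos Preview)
Your approach is essentially the paper's: analyse the walk $W=\pi^{*}(\lk_{\partial\thick Y}(v))$ in the simple graph $\lk_{\partial Y}(x)$, using that short closed walks in a loop-free simple graph are rigid. Parts 1 and 2 match the paper almost verbatim. For Part 3(b) the paper just invokes \cref{lem:basiclinks}\ref{lem:basiclinks3} once the $\pi(F_i)$ are known to be distinct; your direct $S^{2}$ argument (filling the non-cone disc by $\tau_C\cup\tau_D$) is a legitimate alternative, and in fact it makes explicit why $\deg_{\partial Y}(x)=4$ rather than merely $\geq 4$, a point the paper's one-line citation leaves implicit since \cref{lem:basiclinks}\ref{lem:basiclinks3} has $\deg_{G(Y)}(x)=4$ as a \emph{hypothesis}.

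There is one real slip in your Part 3(a), adjacent case. Having reduced to $\lk_Y(x)$ consisting of two disjoint free edges $F,G$, you conclude that $x$ is a cut-vertex ``by \cref{lem:basiclinks}\ref{lem:basiclinks1}''. That lemma only gives the direction cut-vertex $\Rightarrow$ disconnected boundary link; you are using the converse. The converse you need (disconnected $\lk_Y(x)$ forces $x$ to be a cut-vertex) \emph{is} true because $Y$ is contractible---a Mayer--Vietoris computation on $\tilde H_0$ with $U=Y\setminus\{x\}$ and $V$ a small ball about $x$ does it---but it is not what the cited lemma says. Separately, the assertion that ``$\lk_Y(x)$ reduces essentially to the two edges $F,G$'' is correct but under-argued: it holds because in this case the four new-cube $2$-simplices of the cone at $v$ already tile all of $S^{2}$ (the edges $F$ and $G$ become interior to the star of $vx$, so its link is empty), leaving no room for further cells of $\lk_Y(x)$ or for other preimages $v'\in\pi^{-1}(x)$. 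Spelling that out would close the gap. The paper, for its part, dispatches 3(a) in a single clause (``no self-loops nor double edges $\Rightarrow$ $4$-cycle''), which is itself brisk given that a length-$4$ closed walk in a simple graph can still backtrack; your more careful case analysis is a reasonable place to put the work.
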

\begin{proof}
Note that links of vertices in $Y$ do not contain self-loops nor double edges as they are simplicial complexes.
In particular, in light of Fact~\ref{fact:thicklink}, this guarantees that all vertices of $\partial\thick{Y}$ have degree at least $3$: vertices of degree $1$ or $2$ would imply self-loops and double edges in boundary links of $Y$, respectively.

Suppose $\deg_{\partial\thick{Y}}(v)=3$. 
Then, since $\lk_{\partial Y}(\pi(v))$ does not contain self-loops nor double edges, we have that $\pi^*(\lk_{\partial\thick{Y}}(v))$ is a triangle.
Since $Y$ is \cato, it follows that this triangle corresponds to a cube $C\subseteq Y$, with faces $F_1,F_2,F_3$ incident to $x$.
By construction a small ball around $x$ in $\partial \thick{Y}$ is covered by $C\cup C_{F_1}\cup C_{F_2}\cup C_{F_3}$, so $\deg_{\thick{Y}}(x)=4$ and $\deg_{Y}(x) = 3$.
The converse follows as $Y$ is clean: vertices of degree $3$ on $\partial Y$ are corners of cubes, so the claim follows by construction. This also proves the second bullet point as there are no vertices of degree less than $3$ in $\partial \thick{Y}$.

Suppose $v\in\partial\thick{Y}^0$ satisfies the assumptions of point 3.
Then since $\lk_{\partial Y}(\pi(v))$ does not contain self-loops nor double edges, we have that $\pi^*(\lk_{\partial\thick{Y}}(v))$ is a cycle of length $4$, so the projected faces $\pi(F_1), \dotsc, \pi(F_4)$ are distinct.
As $\pi(F_1)$ and $\pi(F_2)$ lie in the same cube of $Y$, by Lemma~\ref{lem:basiclinks} \ref{lem:basiclinks3}, $\pi(v)$ is incident to exactly two cubes in $X$, has $\deg_{\partial Y}(x)=4$ and $\pi(F_3)$, $\pi(F_4)$ lie in the same cube, as desired.
\end{proof}

\begin{lemma}\label{lem:thicksctooriginalsc} 
There is a one to one correspondence between row configurations in $\partial \thick{Y}^1$ and $\partial Y^1$.
Namely if $p_0p_1\dotsc p_k$ is the spine of a row configuration in $\partial \thick{Y}^1$ then $\pi(p_0)\pi(p_1)\dotsc \pi(p_k)$ is the spine of a row configuration in $\partial Y^1$.
\end{lemma}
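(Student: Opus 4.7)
My plan is to prove both directions of the bijection using Lemma~\ref{lem:thickdegrees} and a link-counting argument. For the forward direction, let $\mathcal{C} = (a_i, p_i, b_i : i = 0,\dotsc,k)$ be a row configuration in $\partial\thick{Y}^1$. Since $\pi$ is combinatorial, it sends edges of $\mathcal{C}$ to edges of $\partial Y^1$, and by Lemma~\ref{lem:squaresarefilled} the $4$-cycles $p_{i-1}a_{i-1}a_ip_i$ and $p_{i-1}b_{i-1}b_ip_i$ in $\partial\thick{Y}$ bound faces $F_i^a, F_i^b$ of $\partial\thick{Y}$. The projected tuple $(\pi(a_i), \pi(p_i), \pi(b_i))$ thus satisfies all the edge incidences required in the definition of a row configuration, so it remains to verify the degree conditions along the projected spine.

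The endpoint $\pi(p_0)$ has degree $3$ in $\partial Y^1$ by Lemma~\ref{lem:thickdegrees}~(1). For an interior vertex $p_i$ (with $\deg_{\partial\thick{Y}}(p_i) = 4$), the four incident boundary faces $F_i^a, F_{i+1}^a, F_{i+1}^b, F_i^b$ have four distinct projections by Lemma~\ref{lem:thickdegrees}~(3). These contribute four distinct edges to $\lk_{\partial Y}(\pi(p_i))$ through the combinatorial map $\pi^*$ from Fact~\ref{fact:thicklink}. Since $\partial\thick{Y} \simeq S^2$ by Corollary~\ref{cor:thickenball}, $\lk_{\partial\thick{Y}}(p_i)$ is a $4$-cycle, and its image under $\pi^*$ is therefore a closed walk of length $4$ traversing four distinct edges, i.e.\ a $4$-cycle in $\lk_{\partial Y}(\pi(p_i))$. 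Cleanness of $Y$ ensures $\partial Y$ is locally a surface at $\pi(p_i)$, so $\lk_{\partial Y}(\pi(p_i))$ is itself a single cycle and must coincide with this $4$-cycle, yielding $\deg_{\partial Y}(\pi(p_i)) = 4$. For $p_k$ with $\deg_{\partial\thick{Y}}(p_k) \neq 4$: if $\deg_{\partial\thick{Y}}(p_k) = 3$, part~(1) again gives degree $3$; if $\deg_{\partial\thick{Y}}(p_k) \geq 5$, the analogous walk-in-cycle argument shows $\lk_{\partial Y}(\pi(p_k))$ has length $\geq 5$, so $\deg_{\partial Y}(\pi(p_k)) \neq 4$.

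For the reverse direction, I lift each spine vertex of a row configuration in $\partial Y^1$ to its unique appropriate preimage in $\partial\thick{Y}$: endpoints of degree $3$ lift uniquely via Lemma~\ref{lem:thickdegrees}~(1), and an interior vertex of degree $4$ has a canonical preimage in $\partial\thick{Y}$ determined by the requirement that its four surrounding faces in $\partial\thick{Y}$ project onto the four faces around the original. The side vertices $a_i, b_i$ and all required edges are then forced by combinatorial consistency, producing a row configuration in $\partial\thick{Y}^1$ whose forward projection recovers the original; this establishes the claimed bijection. The main obstacle is securing the cycle structure of $\lk_{\partial Y}(x)$ at every boundary vertex $x$ appearing on a spine, since the walk-counting step rests on it; the cleanness hypothesis is essential to rule out branching or pinched local structures at these vertices, and a careful check is needed near free faces where $\pi$ fails to be injective.
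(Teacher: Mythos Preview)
Your forward-direction argument has a genuine gap at the interior spine vertices. You assert that ``cleanness of $Y$ ensures $\partial Y$ is locally a surface at $\pi(p_i)$'', so that $\lk_{\partial Y}(\pi(p_i))$ is a single cycle which must then coincide with the $4$-cycle you produced. This is not true: cleanness only rules out cut-vertices, degree-$2$ vertices, and degree-$3$ vertices not in a cube. It does \emph{not} force $\partial Y$ to be a $2$-manifold; indeed, the entire purpose of the thickening is that $\partial Y$ need not be a sphere (see the discussion around Figures~\ref{fig:Lshape} and~\ref{fig:ringeg}). Concretely, $\pi^{-1}(\pi(p_i))$ may contain vertices other than $p_i$, and each such preimage contributes its own closed walk to $\lk_{\partial Y}(\pi(p_i))$ via Fact~\ref{fact:thicklink}. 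So the $4$-cycle you obtain from $\pi^*(\lk_{\partial\thick{Y}}(p_i))$ may be a proper subgraph of $\lk_{\partial Y}(\pi(p_i))$, and you cannot conclude $\deg_{\partial Y}(\pi(p_i))=4$. The same issue resurfaces in your treatment of the endpoint $p_k$ when $\deg_{\partial\thick{Y}}(p_k)\geq 5$.

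The paper avoids this by using the \emph{second} clause of Lemma~\ref{lem:thickdegrees}(3), not just the distinctness clause you cite. The argument is inductive along the spine: since $\pi(p_0)$ has degree~$3$ and $Y$ is clean, $\pi(p_0)$ is the corner of a cube $C_1$, and the two faces of $\partial\thick{Y}$ at $p_1$ adjacent to $p_0$ project into $C_1$. Now the common-cube hypothesis of Lemma~\ref{lem:thickdegrees}(3) is satisfied at $p_1$, and that lemma gives both $\deg_{\partial Y}(\pi(p_1))=4$ \emph{and} that the other two projected faces lie in a common cube $C_2$. This common-cube property then propagates to $p_2$, and so on; the row of cubes $(C_1,\dotsc,C_k)$ is built along the way, and the degree conditions for a row configuration in $\partial Y$ fall out directly. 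You have all the ingredients for this argument already, but you invoke Lemma~\ref{lem:thickdegrees}(3) only for distinctness and then substitute a link-counting step that needs an unavailable surface hypothesis.
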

\begin{proof}
By Lemma~\ref{lem:thickdegrees}.1 and since $Y$ is clean, $\deg_{\partial Y}(\pi(p_0)) = 3$, so $\pi(p_0)$ is the corner of a cube $C_1$.
Now, $\deg_{\partial\thick{Y}}(p_1)=4$ and $p_1$ has a cyclic ordering of incident faces $F_1, \dotsc, F_4$.
Since $\pi$ is combinatorial, $\pi(p_0)\pi(p_1)$ is an edge and so $\pi(p_1)\in C_1$.
Hence, without loss of generality we may assume $\pi(F_1),\pi(F_2)\subseteq C_1$ and so by Lemma~\ref{lem:thickdegrees}.3, $\pi(F_3),\pi(F_4)$ both lie in a cube $C_2$.
Moreover, with this labelling of faces, $p_2\in F_3,F_4$ so $\pi(p_2)\in C_2$.
Continuing in this way we find a row of cubes $(C_1,\dotsc,C_k)$ where $\pi(p_0)$ is a corner of $C_0$ and $\pi(p_{i-1}),\pi(p_i) \in C_i$ for each $1\leq i \leq k$.
This implies that $\pi(p_0),\dotsc,\pi(p_k)$ is the spine of a row configuration in $Y$, as desired.
\end{proof}
For simplicity, if $\mathcal{C}\subseteq(\partial\thick{Y})^1$ is a row configuration, we write $\pi(\mathcal{C})\subseteq (\partial Y)^1$ for its corresponding projected row configuration in $\partial Y$.

\subsection{Existence of good row configurations}
Let $Y$ be a clean \cato\ cube subcomplex of a cubulation of the ball $B^3$.
In this section we show that good row configurations exist in $(\partial Y)^1$.
By the correspondence between row configurations in $(\partial Y)^1$ and those in $G(Y)$ discussed in Section~\ref{sec:scs}, this implies that there are good configurations in $G(Y)$.
By Lemma~\ref{lem:thicksctooriginalsc}, our argument boils down to showing that good row configurations exist in the thickening $(\partial \thick{Y})^1$.
For this, we use a simple path-counting argument.

\begin{fact}\label{lem:sc1} 
Let $v\in \partial Y^0$ have $\deg_{\partial Y}(v)=3$.
Then, there are at least three row configurations in $(\partial Y)^1$ with $v$ as a starting point.
\end{fact}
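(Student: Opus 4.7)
The plan is to use the cleanness of $Y$ to pin down the local structure at $v$, then greedily extend a row configuration along each of the three directions emanating from $v$ in the cube at that corner.

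First I would establish that $v$ is a corner of a cube. Since $Y$ is clean, $v$ is neither a cut-vertex nor a degree-$3$ vertex missing from a cube; combining Lemma~\ref{lem:basiclinks}\ref{lem:basiclinks1} (so $\lk_{\partial Y}(v)$ is connected) with the cleanness assumption forces $v$ to lie in a cube, and then Lemma~\ref{lem:basic}\ref{lem:basic2}--\ref{lem:basic3} (plus the fact that $\deg_{\partial Y}(v)=3$) forces $\lk_{\partial Y}(v)$ to be a triangle, so by Lemma~\ref{lem:basiclinks}\ref{lem:basiclinks2} there is a unique cube $C_1$ containing $v$, and the three neighbours $w_1,w_2,w_3$ of $v$ in $C_1$ are distinct boundary vertices. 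Moreover the three faces of $C_1$ incident to $v$ all lie on $\partial Y$, so all edges of these faces lie in $(\partial Y)^1$.

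Next, for each $i\in\{1,2,3\}$, I would build a row configuration whose spine begins with the edge $vw_i$. Take $p_0=v$, $p_1=w_i$, and read off $a_0,a_1,b_0,b_1$ from the two faces of $C_1$ containing the edge $vw_i$. If $\deg_{\partial Y}(p_1)\neq 4$, this is already a valid length-$1$ row configuration and we stop. Otherwise, apply Lemma~\ref{lem:basiclinks}\ref{lem:basiclinks3}: since $\deg_{G(Y)}(p_1)=4$ and the $4$-cycles $p_0a_0a_1p_1$ and $p_0b_0b_1p_1$ already produce two faces at $p_1$ sharing a cube (namely $C_1$), the only possibility compatible with the link structure is that there is a unique second cube $C_2$ at $p_1$ whose two remaining boundary faces at $p_1$ provide the data $a_2,p_2,b_2$. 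Iterate exactly as in the proof of Lemma~\ref{lem:scrowcorrespondance}; the procedure halts the first time the current endpoint has degree $\neq 4$.

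Finally, I would verify that this process actually terminates in a valid configuration. Termination is guaranteed because $Y$ is finite and the spine vertices $p_0,p_1,p_2,\ldots$ are forced to be distinct: the midcubes of the spine edges lie in a single immersed hyperplane $H_p$, and by Theorem~\ref{thm:wise} its cubical neighbourhood $H_p\times I$ is a convex subcomplex and $H_p$ itself is a \cato\ cube complex, so the sequence of cubes $C_1,C_2,\ldots$ cannot loop back (this would force $H_p$ to self-cross). The three resulting row configurations have spines with first edges $vw_1,vw_2,vw_3$ respectively, and since these three edges are distinct and the identification of row configurations only swaps the $a$- and $b$-sequences along a fixed spine, the three configurations are pairwise distinct.

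The step I expect to be the main obstacle is the distinctness of the spine vertices during the extension, i.e.\ ruling out that the row of cubes wraps around and revisits a previously-used $p_j$; this is where the \cato\ hypothesis enters essentially, via embeddedness of hyperplanes and convexity of their cubical neighbourhoods as supplied by Theorem~\ref{thm:wise}.
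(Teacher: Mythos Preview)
The paper does not prove this fact at all; it simply records that it is ``clear from definitions'' and remarks that ``at least three'' can be sharpened to ``exactly three''. Your write-up is therefore much more detailed than the paper's treatment, and the overall strategy --- use cleanness to see that $v$ is the corner of a cube, then greedily extend a row configuration along each of the three edges at $v$ until the spine meets a vertex of degree $\neq 4$ --- is correct and is essentially the mechanism already implicit in the proof of Lemma~\ref{lem:scrowcorrespondance}.

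There is, however, a concrete error in your termination argument. You assert that ``the midcubes of the spine edges lie in a single immersed hyperplane $H_p$''. This is false: consecutive spine edges $p_{i-1}p_i$ and $p_ip_{i+1}$ share the vertex $p_i$, and in a \cato\ cube complex two distinct edges meeting at a vertex are never dual to the same hyperplane (the length-$2$ path between their other endpoints would otherwise cross that hyperplane twice, contradicting the convexity in Theorem~\ref{thm:wise}\ref{thm:wise3}). So no single $H_p$ carries the whole spine, and your self-crossing argument does not go through as written.

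The repair is to track the \emph{side} edges rather than the spine edges. The edges $a_0p_0,a_1p_1,a_2p_2,\ldots$ are, step by step, opposite edges of faces of $C_1,C_2,\ldots$, hence all dual to one hyperplane $H_a$; likewise the $b_ip_i$ are all dual to some $H_b$, and $H_a\neq H_b$ since $a_0p_0$ and $b_0p_0$ meet at $p_0$. Every cube $C_i$ is crossed by both $H_a$ and $H_b$, so the $C_i$ correspond to edges of $H_a\cap H_b$. By Theorem~\ref{thm:wise}\ref{thm:wise2} applied to $H_a$ and then to $H_a\cap H_b$ as a hyperplane of $H_a$, the complex $H_a\cap H_b$ is \cato\ of dimension at most $1$, i.e.\ a tree. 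Since $C_i\neq C_{i+1}$ (the four neighbours of $p_i$ cannot all lie in a single $3$-cube), the walk traced by the $C_i$ in this tree never backtracks and hence visits distinct edges. Finiteness of $Y$ now forces termination, and distinctness of the cubes yields distinctness of the $p_i$, since each interior $p_i$ lies in exactly the two cubes $C_i,C_{i+1}$.
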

The above fact is clear from definitions and can be strengthened to exactly three, but this will not be needed for our purposes.

\begin{figure}[ht]
	\centering
    \includegraphics[scale=0.8]{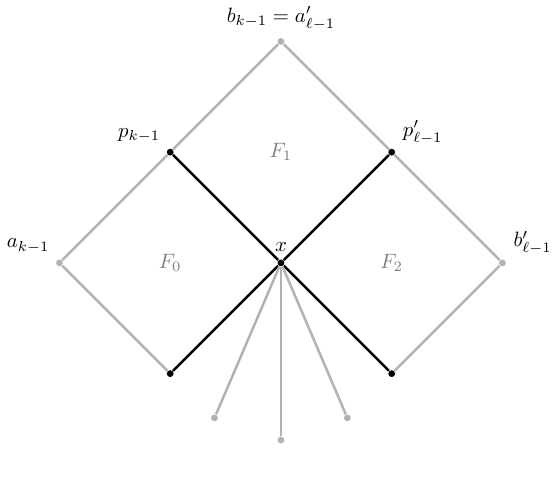}
	\caption{Two row configurations sharing endpoint $x$. In fact, $F_0$ and $F_2$ must share an edge.}
    \label{fig:twoSCcontradiction}
\end{figure}

\begin{lemma}\label{lem:scdegbound}
Let $x\in\partial\thick{Y}^0$ be a vertex with $\deg_{\partial\thick{Y}}(x)\geq 5$.
Then there are at most $2\lfloor \frac{d}{3} \rfloor$ row configurations which end at $x$.
\end{lemma}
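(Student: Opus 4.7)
The plan is to associate to each row configuration ending at $x$ a ``middle'' vertex in the cycle $\lk_{\partial \thick{Y}}(x)$, and to show that the middles form an independent set. Since $\thick{Y}$ is a $3$-manifold with boundary homeomorphic to the ball (Corollary~\ref{cor:thickenball}), $\lk_{\partial \thick{Y}}(x)$ is a cycle of length $d$. Label its vertices $v_1,\dotsc,v_d$ cyclically, with $v_m$ corresponding to the edge $e_m=xu_m$ at $x$, and the link edge $v_m v_{m+1}$ to the boundary face $F_{m,m+1}$ of $\thick{Y}$ at $x$ containing both $e_m$ and $e_{m+1}$. Because $\thick{Y}$ is a manifold, each such boundary face is contained in exactly one $3$-cube of $\thick{Y}$.

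Given a row configuration $\mathcal{C}=(a_i,p_i,b_i)_{i=0}^k$ in $\partial\thick{Y}^1$ ending at $x$, the $4$-cycles $a_{k-1}a_k x p_{k-1}$ and $b_{k-1}b_k x p_{k-1}$ are external faces of new cubes of $\thick{Y}$ (via the correspondence with the underlying row of cubes afforded by the thickening construction), and hence lie on $\partial\thick{Y}$. They must therefore be the two boundary faces at $x$ incident to $p_{k-1}x$, namely $F_{m-1,m}$ and $F_{m,m+1}$, where $v_m$ is the link vertex corresponding to $p_{k-1}x$; we call $v_m$ the \emph{middle} of $\mathcal{C}$. The middle uniquely determines $\mathcal{C}$: the terminal cube $C_k$ is the unique cube containing both $F_{m-1,m}$ and $F_{m,m+1}$, which fixes $a_k$ and $b_k$, and the degree-$4$ condition forces a unique backward extension along the spine.

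The key claim is that middles are pairwise non-adjacent in the link cycle. Suppose for contradiction that $v_m$ and $v_{m+1}$ are both middles, giving cubes $C$ and $C'$; both contain the boundary face $F_{m,m+1}$, and since each boundary face lies in a unique cube we have $C=C'$. Then $C$ contains the three boundary faces $F_{m-1,m}$, $F_{m,m+1}$ and $F_{m+1,m+2}$ at $x$, whose corresponding link edges involve the four distinct vertices $v_{m-1},v_m,v_{m+1},v_{m+2}$ (distinct because $d\geq 5$). But the three faces of $C$ at $x$ correspond to the edges of a single triangle in $\lk_{\thick{Y}}(x)$ with vertex set the three neighbours of $x$ in $C$, so they can only involve three link vertices, a contradiction.

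The middles therefore form an independent set in a cycle of length $d$, so there are at most $\lfloor d/2\rfloor$ of them. A short check for $5\leq d\leq 11$ together with the estimate $d/2\leq 2d/3-2$ for $d\geq 12$ yields $\lfloor d/2\rfloor\leq 2\lfloor d/3\rfloor$ for all $d\geq 5$, which suffices. The principal subtlety is expected to be the identification of the row-configuration faces with the two boundary faces at $p_{k-1}x$ in the link of $x$; this is precisely where the construction of the thickening (and its new cubes with external faces) is needed.
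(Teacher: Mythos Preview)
Your key claim that middles are pairwise non-adjacent in $\lk_{\partial\thick{Y}}(x)$ is where the argument breaks down, and the issue is a conflation of cubes in $\thick{Y}$ with cubes in $Y$. No cube of $\thick{Y}$ can contain two distinct boundary faces $F_{m-1,m}$ and $F_{m,m+1}$: each boundary face of $\thick{Y}$ is the external face of a \emph{unique} new cube $C_S$, so distinct boundary faces lie in distinct cubes. The only sensible reading of your ``terminal cube $C$'' is the last cube of the \emph{projected} row configuration $\pi(\mathcal{C})$ in $Y$ (via Lemma~\ref{lem:thicksctooriginalsc} and Lemma~\ref{lem:scrowcorrespondance}); this cube contains $\pi(F_{m-1,m})$, $\pi(F_{m,m+1})$, $\pi(F_{m+1,m+2})$, which are indeed its three faces at $\pi(x)$. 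But your contradiction compares three link vertices in $\lk_{Y}(\pi(x))$ against four link vertices $v_{m-1},v_m,v_{m+1},v_{m+2}$ in $\lk_{\partial\thick{Y}}(x)$, and the map $\pi^*\colon \lk_{\partial\thick{Y}}(x)\to\lk_{\partial Y}(\pi(x))$ need not be injective on vertices (this is exactly what Fact~\ref{fact:thicklink} warns about). In the adjacent-middle scenario one checks that $\pi^*(v_{m-1})=\pi^*(v_{m+2})$ is the third neighbour $z$ of $\pi(x)$ in $C$, so no contradiction arises.

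The paper treats adjacent middles as a genuine case (a ``pair'' of row configurations sharing a face at $x$, as in Figure~\ref{fig:twoSCcontradiction}) and works harder: it shows, via a claim about the projected configuration in $Y$, that no row configuration can have spine ending along an edge projecting to $\pi(x)z$. This rules out three \emph{consecutive} middles, not two, and the counting then proceeds by charging two faces to each isolated middle and three to each pair. Your independent-set bound $\lfloor d/2\rfloor$ would be stronger and would suffice for Lemma~\ref{lem:goodscshell}, but the non-adjacency premise underpinning it is not available.
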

Note that the condition on the degree is necessary: one can easily find examples of $Y$ with vertices of degree $3$ having three row configurations ending at them in the thickening, e.g. when $Y$ a single cube.
\begin{proof}
Let
\begin{align*}
\mathcal{C} &= (a_i, p_i, b_i \colon i=0,\dotsc,k)\\
\mathcal{C}' &= (a_i', p_i', b_i' \colon i=0,\dotsc,\ell)
\end{align*}
be two row configurations in $\partial\thick{Y}^1$ with a common endpoint $x\coloneqq p_{k}=p_\ell'$, as in Figure~\ref{fig:twoSCcontradiction}.

If $p_{k-1} = p_{\ell-1}'$, then $\mathcal{C}$ and $\mathcal{C}'$ share the last edge of their spine, and by a simple inductive argument it follows that the two spines are the same, meaning $\mathcal{C} = \mathcal{C}'$. 
Suppose now that $p_{k-1} \neq p_{\ell-1}'$, yet they both lie in face $F_1$ with vertices $\{p_{k-1},x,p_{\ell-1}',y\}$ for $y=b_{k-1} = a_{\ell-1}'$, labelled as in Figure~\ref{fig:twoSCcontradiction}.
In particular, $F_0$, $F_1$ and $F_2$ are pairwise distinct faces of $\partial Y$.

By Lemma~\ref{lem:thicksctooriginalsc}, the row configurations $\mathcal{C}$, $\mathcal{C}'$ in $\partial\thick{Y}^1$ correspond to row configurations $\pi(\mathcal{C}),\pi(\mathcal{C}')$ in $\partial Y^1$ respectively, which share the face $\pi(F_1)$.
Let $(C_1,\dotsc, C_k)$, $(C_1', \dotsc, C_\ell')$ be the rows of cubes corresponding to $\pi(\mathcal{C})$, $\pi(\mathcal{C}')$, respectively.
Then, $C_k=C_{\ell}'$ as they share a face on $\partial Y$, 
Let $\{\pi(p_{k-1}), \pi(p'_{\ell-1}),z\}$ be the three neighbours of $\pi(x)$ in $C_k$.
\begin{claim}
There is no row configuration in $(\partial Y)^1$ whose spine ends with edge $\pi(x)z$.
\end{claim}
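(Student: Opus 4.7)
The plan is to proceed by contradiction: assume a row configuration $\mathcal{C}'' = (a_i'', p_i'', b_i'' : i = 0,\dots,m)$ in $(\partial Y)^1$ with $p''_{m-1} = z$ and $p''_m = \pi(x)$. I will lift $\mathcal{C}''$ to a row configuration $\mathcal{C}''_{\thick}$ in $(\partial \thick{Y})^1$ via the bijection in Lemma~\ref{lem:thicksctooriginalsc}, and then exploit the sphericity of $\partial \thick{Y}$ provided by Corollary~\ref{cor:thickenball} to derive a contradiction with the hypothesis $\deg_{\partial\thick{Y}}(x) \geq 5$.

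The first key step is to argue that the last spine edge of $\mathcal{C}''_{\thick}$ is an edge $xw$ with $w = a_k = b'_{\ell}$. By Lemma~\ref{lem:scrowcorrespondance} applied to $Y$, the two side faces of $\mathcal{C}''$ at $\pi(x)$ are the two faces of its last cube $C''_m$ containing the edge $\pi(x)z$, and both lie on $\partial Y$. I will show that $C''_m = C_k$ and hence these side faces must be $\pi(F_0)$ and $\pi(F_2)$: since $\pi(F_0)$ and $\pi(F_2)$ lie in the unique cube $C_k$ of $Y$ at $\pi(x)z$ on their respective sides of the link cycle, any other cube of $Y$ containing the edge $\pi(x)z$ would contribute further boundary faces to $\lk_Y(\pi(x))$ at $z^*$, whose structure can be traced back to additional sides in the thickening inconsistent with the setup. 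Once this is established, the lifted side faces of $\mathcal{C}''_{\thick}$ at its last spine vertex are the unique preimages $F_0, F_2$ in $\partial \thick{Y}$, and since $x$ is the only vertex of $F_0 \cap F_2$ projecting to $\pi(x)$, the last spine vertex of $\mathcal{C}''_{\thick}$ must equal $x$. Similarly the preceding vertex must lie in $F_0 \cap F_2$ and project to $z$, forcing $a_k = b'_{\ell} =: w$.

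With $a_k = b'_\ell$, the three faces $F_0, F_1, F_2$, viewed as edges in the link $\lk_{\partial\thick{Y}}(x)$, become respectively $p_{k-1}^* w^*$, $p_{k-1}^* p'^*_{\ell-1}$, and $p'^*_{\ell-1} w^*$, which form a triangle on the three link-vertices $\{p_{k-1}^*, p'^*_{\ell-1}, w^*\}$. But by Corollary~\ref{cor:thickenball}, $\partial \thick{Y}$ is homeomorphic to $S^2$, so $\lk_{\partial\thick{Y}}(x)$ is a simple cycle of length $d = \deg_{\partial\thick{Y}}(x) \geq 5$. A cycle of length at least $4$ contains no triangle, giving the required contradiction.

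The main obstacle I anticipate is the first step, namely justifying that any lifted $\mathcal{C}''_{\thick}$ must end precisely at $x$ with preceding spine vertex $w$ on the common face of $F_0$ and $F_2$. This requires ruling out the possibility that $C''_m \neq C_k$ in $Y$, which in principle could yield side faces different from $\pi(F_0), \pi(F_2)$; handling this will likely rely on carefully analyzing the cubes of $\thick{Y}$ incident to the interior edge $\pi(x)z$, using the cyclic structure of $\lk_{\thick{Y}}(\pi(x)z)$ together with the fact that $F_0$ and $F_2$ are the only faces of $C_k$ along this edge.
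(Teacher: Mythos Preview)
Your approach is genuinely different from the paper's and, with one simplification, does lead to a correct proof. The paper argues \emph{forward}: it uses a disc diagram in $Y$ to exhibit a third face $F\not\subseteq C_k$ incident to the edge $\pi(x)z$, and then observes that any last spine edge of a row configuration can lie in only the two faces of its terminal cube, giving an immediate contradiction. You instead argue by contradiction via the thickening, aiming to force $F_0$ and $F_2$ to share an edge in $\partial\thick{Y}$ and hence produce a triangle $p^*_{k-1}p'^*_{\ell-1}w^*$ in $\lk_{\partial\thick{Y}}(x)$, which is impossible since that link is a simple cycle of length $d\ge 5$ by Corollary~\ref{cor:thickenball}. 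That endgame is correct and pleasant.

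Where your proposal is weak is precisely the step you flag as the obstacle, and your suggested attack (analysing $\lk_{\thick Y}(\pi(x)z)$ and the cubes of $\thick Y$ along the edge) is far heavier than needed. The point is immediate in $Y$: under your assumption $z=p''_{m-1}$ has degree $3$ or $4$ in $\partial Y$, and by the structure of row configurations (Lemma~\ref{lem:scrowcorrespondance} together with Lemma~\ref{lem:basiclinks}\ref{lem:basiclinks3}) the edge $p''_{m-1}p''_m=\pi(x)z$ lies in a \emph{unique} cube of $Y$, namely $C''_m$; since it also lies in $C_k$, we get $C''_m=C_k$, and the only faces of $Y$ on $\pi(x)z$ are $\pi(F_0),\pi(F_2)$. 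This is exactly the statement that in the thickening construction the new cubes $C_{\pi(F_0)}$ and $C_{\pi(F_2)}$ are glued along their $\pi(x)z$-faces, so $F_0$ and $F_2$ share the external edge over $\pi(x)z$; hence $a_k=b'_\ell$. Once you have this, the detour through lifting $\mathcal{C}''$ via Lemma~\ref{lem:thicksctooriginalsc} is unnecessary: the triangle in $\lk_{\partial\thick Y}(x)$ follows directly.

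Two smaller remarks. First, you only treat the case $p''_m=\pi(x)$; the claim as stated also covers $p''_m=z$, though only your case is actually used afterwards. Second, your appeal to unique preimages of $\pi(F_0),\pi(F_2)$ under $\pi$ is valid because these faces lie in the cube $C_k$ and hence are not free; you should make that explicit. In sum, your route works, but the paper's disc-diagram argument is shorter and stays entirely in $Y$, whereas your argument trades that for a clean topological punchline in $\partial\thick Y$.
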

\begin{poc}
Since $\pi(F_0), \pi(F_1),\pi(F_2)\subseteq \partial Y$ lie in a common cube $C_k$ of $Y$, the edges corresponding to them in $\lk_{\partial Y}(\pi(x))$ form a triangle $T$.
We first show that there is a face $F$ in $\partial Y$ distinct from $\pi(F_0),\pi(F_1),\pi(F_2)$ which is incident to $\pi(x)z$.

By Lemma~\ref{lem:thickdegrees}.2, $\deg_{\partial Y}(\pi(x))\geq 4$ as $\deg_{\partial \thick{Y}}(x)\geq 5$ and so there is an edge $\pi(x)v$ in $Y$ such that $v\not\in \{ \pi(p_{k-1}), \pi(p'_{\ell-1}),z\}$.
Since $Y$ is clean, in particular $\pi(x)$ is not a cut vertex.
Fix a path $\gamma\subseteq Y^1\setminus \pi(x)$ from $v$ to $z$.
Notice that the edges $\pi(x)\pi(p_{k-1})$ and $\pi(x)\pi(p'_{\ell-1})$ are only incident to faces of $C_k$ in $Y$ as they are on the spine of row configurations.
Hence, in a disc diagram $D\rightarrow Y$ with $\partial D=\gamma$, since $\pi(x)z\subseteq C_k$ and $\pi(x)v\not\subseteq C_k$ the face $F$ incident to $\pi(x)z$ in $D$ is not on $C_k$, as claimed.

This suffices to prove the claim, since if $\pi(x)z$ was the last edge of the spine of a row configuration, then it would only be incident in $Y$ to the two faces of the last cube in the corresponding row of cubes.
\end{poc}
Indeed, assuming the claim, no row configuration of $\partial\thick{Y}^1$ can have spine ending with edge $z'x$, for $z'\in \pi(z)$, as this would lead to a contradiction in light of Lemma~\ref{lem:thicksctooriginalsc}.

In what follows, we restrict our considerations to row configurations in $\partial \thick{Y}^1$ which end in $x$, and faces of $\partial\thick{Y}$ incident to $x$.
Let $k$ be the total number of row configurations and write $k=t+2r$ where $t$ counts \defn{single} row configurations -- those sharing no face (incident to $x$) with other row configurations, and $r$ counts the \defn{pairs} of row configurations as above.
Then, each single row configuration forbids two faces while each pair forbids three.
In conclusion, 
\[
k = t+2r \leq \floor{\frac{d-3r}{2}} + 2r \leq 2\floor{\frac{d}{3}},
\]
as $3r\leq d$, concluding the proof.
\end{proof}

\begin{lemma}\label{lem:goodscshell}
There exists a good configuration in $\thick{Y}$.
\end{lemma}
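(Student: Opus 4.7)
My plan is to argue by contradiction: assume $\thick{Y}$ admits no good configuration, and derive a contradiction by double-counting row configurations in $\partial\thick{Y}^1$. The key topological input is that $\partial\thick{Y}$ is a quadrangulation of $S^2$ (Corollary~\ref{cor:thickenball}), so Euler's formula applies cleanly.

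First, writing $V_d$ for the number of degree-$d$ vertices of $\partial\thick{Y}$, I would use $V-E+F=2$ together with $\sum_d dV_d = 2E$ and $2E = 4F$ (each $2$-cell is a square, and each edge lies in exactly two $2$-cells because $\partial\thick{Y}$ is a closed surface) to extract the identity
\[
V_3 \;=\; 8 + \sum_{d\geq 5}(d-4)V_d.
\]
This encodes the familiar fact that low-degree vertices must dominate.

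Next I would count row configurations in $\partial\thick{Y}^1$ in two ways. Under the contradiction hypothesis, every row configuration in $\partial\thick{Y}^1$ has a start vertex of degree $3$ and an end vertex of degree at least $5$: degree $4$ is forbidden by definition, and both endpoints being of degree $3$ would give a good configuration, which transports to and from $Y$ via the bijection of row configurations in Lemma~\ref{lem:thicksctooriginalsc} and the degree correspondence in Lemma~\ref{lem:thickdegrees}. Applying Fact~\ref{lem:sc1} to $Y$ and lifting through this bijection (using that degree-$3$ vertices of $\partial\thick Y$ biject with those of $\partial Y$) shows that each of the $V_3$ degree-$3$ vertices of $\partial\thick{Y}$ is the start of at least three row configurations, giving at least $3V_3$ configurations in total. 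Lemma~\ref{lem:scdegbound} bounds the number of configurations ending at each degree-$d$ vertex by $2\lfloor d/3\rfloor$. Combining these estimates with the Euler identity above yields
\[
24 \;\leq\; \sum_{d\geq 5} V_d \bigl(2\lfloor d/3\rfloor - 3(d-4)\bigr).
\]

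The remaining step -- and the only real arithmetic check -- is to verify that every coefficient on the right-hand side is at most $-1$, i.e.\ that $2\lfloor d/3\rfloor \leq 3d - 13$ for all $d\geq 5$. Equality holds at $d=5$ by direct inspection, and for $d\geq 6$ the inequality $2\lfloor d/3\rfloor \leq 2d/3 \leq 3d - 13$ follows from $7d \geq 39$. Hence the right-hand side is non-positive, contradicting $24 \leq 0$. I expect the main subtlety to lie not in the final arithmetic, which is essentially forced by the sphere topology, but in ensuring the correspondences (row configurations, good configurations, and degrees) between $Y$ and $\thick{Y}$ are applied correctly -- in particular in transferring Fact~\ref{lem:sc1} from $\partial Y$ to the thickened boundary.
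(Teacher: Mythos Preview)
Your proposal is correct and follows essentially the same approach as the paper: both use Euler's formula on the spherical quadrangulation $\partial\thick{Y}$ to obtain $V_3 = 8 + \sum_{d\geq 5}(d-4)V_d$, then combine the lower bound of three row configurations per degree-$3$ vertex (Fact~\ref{lem:sc1}) with the upper bound of $2\lfloor d/3\rfloor$ configurations ending at each degree-$d$ vertex (Lemma~\ref{lem:scdegbound}), reducing to the same arithmetic inequality. The only cosmetic difference is that the paper argues directly (showing the number of good configurations is at least $24$) whereas you phrase it as a contradiction; your extra care in transferring Fact~\ref{lem:sc1} through the bijection of Lemma~\ref{lem:thicksctooriginalsc} is not strictly needed, since the fact is purely graph-theoretic and applies equally to $\partial\thick{Y}^1$.
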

\begin{proof}
Let $n_k$ denote the number of vertices on $\partial \thick{Y}$ with degree $k$ in $\partial \thick{Y}$, and $|E|$, $|F|$ denote the number of edges, faces respectively of $\partial \thick{Y}$.
Then, by Euler's formula and Corollary~\ref{cor:thickenball},
\begin{align*}
2 &= \sum_{k\geq 3} n_k - |E| +|F|.\\
\intertext{Note that every edge is in exactly two faces, and every face is bounded by exactly four edges. Hence $2|F| = |E|$ and so,}
2 &= \sum_{k\geq 3} n_k -\frac{1}{2} |E|\\
n_3 &= 8 + \sum_{k\geq 5} (k-4) n_k,
\end{align*}
where the last line follows from using that $\sum k\cdot n_k = 2|E|$, multiplying both sides by $4$ and rearranging.

We may assume that there exists a vertex of degree at least $5$ on $\partial X$ as otherwise a good configuration exists immediately.
Let $u,v$ be vertices of $\partial \thick{Y}$ with $\deg_{\partial \thick{Y}}(v) = 3$ and $\deg_{\partial \thick{Y}}(u) = k\geq 5$. 
By Fact~\ref{lem:sc1}, $v$ is the starting point of three row configurations, and Lemma \ref{lem:scdegbound} implies $u$ is the ending point of at most $2\lfloor k/3\rfloor$ row configurations.
Hence, there are at least $3n_3$ row configurations, of which at most $\sum_{k\geq 5} 2\lfloor\frac{k}{3}\rfloor n_k$ end in a vertex of degree at least $5$.
Write $N$ for the number of row configurations with starting and ending point of degree $3$.
Using the above formula for $n_3$ we thus have
\begin{align*}
N &\geq 3n_3 -\sum_{k \geq 5} 2\left\lfloor\frac{k}{3}\right\rfloor n_k\\
&= 24 + \sum_{k \geq 5} \underbrace{\left(3k-2\left\lfloor\frac{k}{3}\right\rfloor  - 12\right)}_{\geq 0 \text{ for } k\geq 5} n_k\\
&\geq 24.\qedhere
\end{align*}
\end{proof}

\section{Finite \cato~cube complexes of dimensions 1 and  2}\label{sec:dim12}
In this section, we sketch the proof of the following strengthening of Theorem~\ref{thm:disk}.

\begin{reptheorem}{thm:dim12}
Let $X$ be a \cato\ cube complex of top dimension at most $2$ with finitely many cells and $D$ its matrix of pairwise distances between vertices on the combinatorial boundary of $X$.
Then, the combinatorial type of $X$ is reconstructible from $D$.
\end{reptheorem}

The arguments we use are very similar to those used in the proof of Theorem~\ref{thm:main}, with much less technical complications.
\begin{proof}[Proof sketch of Theorem~\ref{thm:dim12}]
If $k=1$, then $X$ is a tree and its boundary are its leaves, so reconstruction can be achieved by a simple inductive argument.
Indeed, the distance between the neighbour of a leaf $v$ and other leaves is one less than the distance between other leaves and $v$, and this neighbour is again a leaf after removing $v$ precisely if it is not on any geodesic between pairs of leaves.

If $k=2$, we may first assume that there are no vertices of degree at most $1$, as we may then repeatedly remove such vertices. 
In particular, without loss of generality, every vertex of $X$ is contained in at least one face.
Similarly we may assume that $X$ has no cut vertices.

Consider an inclusion maximal disc diagram $D \hookrightarrow X$. 
In particular, since $X$ has no $3$-cubes and is \cato, $D$ is a quadrangulation with all internal degrees at least $4$, so \cite[Lemma 3.4]{john2D} implies that there is a vertex $v$ of degree $2$ on $\partial D$.
Since $X$ has no cut vertices and $D$ is maximal, $v$ is contained in a unique face $F$ of $X$. 
Let $u$ be the vertex not adjacent to $v$ in this face.
Then, by using Lemma~\ref{lem:adjacentverticesandhyperplanes} similarly as in the argument for the $3$-dimensional setting, we can recover distances between boundary vertices and $u$ in the new complex.

Lastly, we show that vertices on $\partial X$ with degree $2$ in $X$ can be recognised. 
Let $v$ be a vertex on $\partial X$ with $d_{\partial X} (v) = 2$ and $e$, $f$ be the two edges on $\partial X$ incident to $v$.
Write $H_e$ and $H_f$ to denote the hyperplanes in $X$ dual to $e$, $f$ respectively.
We will show that $v$ has degree $2$ in $X$ if and only if $H_e$ and $H_f$ cross in $X$.

Suppose first that $H_e$ and $H_f$ cross in $X$.
Now, by \cite[Lemma 3.6]{wise} $e$ and $f$ are contained in a same face $F$ of $X$.
In particular, since $e$ and $f$ are both on the boundary, $v$ is only incident to $F$ and therefore has degree $2$ in $X$.
Conversely, if $d_{X}(v)= 2$, $H_e$ and $H_f$ clearly cross in the face incident to $v$. 

We now explain how this condition can be recognised from the boundary distances.
Both $H_e$ and $H_f$ split $X$ into two connected components.
Let $\partial X^0 =A_e\cup B_e$, $A_f\cup B_f$ be the resulting partitions of the vertices of $\partial X$.
These can be identified from the boundary distances by using Lemma~\ref{lem:adjacentverticesandhyperplanes}.
If $H_e$ and $H_f$ do not cross in $X$, then it must be that, without loss of generality, $A_e\subseteq A_f$ or $B_e\subseteq A_f$.
Hence, checking that $H_e$ and $H_f$ cross in $D$ amounts to checking that that $A_e \not\subseteq A_f$ and $B_e\not\subseteq A_f$.
\end{proof}

\newpage
\section{Further work}
\label{sec:conclusion}

We conjecture that the \cato\ property is also necessary and sufficient for higher dimensions.
\begin{conjecture}\label{conj:higherdim}
For any $k\geq 4$, any finite \cato\ cube complex with an embedding in $\R^k$ can be reconstructed up to combinatorial type from its boundary distances.
\end{conjecture}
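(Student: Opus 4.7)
The plan is to follow the architecture of the proof of Theorem~\ref{thm:main}: induct on the number of boundary vertices, clean the complex to strip off low-dimensional pathologies, thicken the cleaned complex into a cubulation of the ball $B^k$, locate a row of $k$-cubes on the boundary via a topological counting argument on $\partial\thick{X}\cong S^{k-1}$, and remove that row to apply the inductive hypothesis. Each of the three pillars --- cleaning, thickening, and row existence --- has a natural analogue in higher dimension, but the technical work in each grows with $k$, and one of them will require new ideas.

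First, I would extend the cleaning catalogue to include cut-vertices, corners of $j$-cells for all $2\leq j\leq k$, and the natural low-degree defects at boundary vertices whose link identifies a removable flap. Recognisability of each such structure from the distance matrix $D$ should follow from an iterated application of Lemma~\ref{lem:squaresarefilled} (which identifies squares, and by induction higher-dimensional cubes via their $1$-skeletons, in the spirit of the median/Sageev--Roller viewpoint on \cato\ cube complexes) together with the link-based diagnostics of Lemma~\ref{lem:basiclinks}. Preservation of the \cato\ property under the corresponding removals is a direct link calculation: deleting a corner cell amounts to deleting a simplex from each relevant link, which preserves flagness. Distance recovery proceeds as in Section~\ref{sec:cleaning} using the convexity of hyperplane neighbourhoods (Theorem~\ref{thm:wise}) and Lemma~\ref{lem:adjacentverticesandhyperplanes}, both of which are dimension-independent.

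The thickening construction generalises with only cosmetic changes. One attaches a new $k$-cube $C_S=I\times S$ to each $(k-1)$-face $S$ on $\partial X$, then identifies lateral $(k-1)$-faces of consecutive $C_S$ around each $(k-2)$-cell of $\partial X$ according to the cyclic order induced by the embedding $X\hookrightarrow\R^k$. The same deformation-retract argument shows $\thick{X}$ is contractible and pure, and the verification that $\thick{X}$ is a $k$-manifold with $\partial\thick{X}\cong S^{k-1}$ (the analogues of Claim~\ref{claim:thickbasic} and Corollary~\ref{cor:thickenball}) carries over using the fact that the link of a boundary vertex of a pure $k$-dimensional cube complex is a triangulated $(k-1)$-ball. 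Rows of $k$-cubes on the boundary, and their associated row configurations, can be defined as before, now with intermediate spine vertices of degree $k+1$ and endpoints that are corners of a single $k$-cube; the projection $\pi\colon\partial\thick{X}\to\partial X$ transfers configurations correctly via the higher-dimensional analogue of Lemma~\ref{lem:thicksctooriginalsc}.

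The main obstacle, and the step I expect to require genuinely new ideas, is the higher-dimensional analogue of Lemma~\ref{lem:goodscshell}: existence of a good row configuration on $\partial\thick{X}$. The three-dimensional proof combines Euler's formula on $\partial\thick{X}\cong S^2$ with the observation that a vertex of degree at least $5$ can be the endpoint of only a controlled number of row configurations. In dimension $k$, Euler's formula must be replaced by a Dehn--Sommerville-type identity for the cubical $f$-vector of a PL $(k-1)$-sphere, and the analogue of Lemma~\ref{lem:scdegbound} has to bound row competitors at a high-degree vertex through the full combinatorial structure of the link --- now a $(k-2)$-dimensional flag triangulated sphere rather than a single cycle. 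A further complication absent in dimension three is that two rows of $k$-cubes can interact on the boundary through overlaps of codimension greater than one, so the link-based "no second spine" argument of Claim~\ref{claim:hyperplanesdonotcross} (and the supporting disc-diagram reasoning around Figure~\ref{fig:twoSCcontradiction}) will likely need to be carried out in a link rather than in $\thick{X}$ itself. Packaging these ingredients into a clean inequality that forces at least one good configuration, while simultaneously confirming that the enlarged cleaning list is exhaustive, is where I expect the bulk of the work in proving Conjecture~\ref{conj:higherdim} to lie.
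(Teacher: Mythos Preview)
The statement you are attempting is Conjecture~\ref{conj:higherdim}, and the paper does \emph{not} prove it: it is posed as an open problem in Section~\ref{sec:conclusion}. So there is no paper proof to compare against, and your write-up is explicitly a research plan rather than a proof --- you yourself flag that the existence of a good row configuration on $\partial\thick{X}\cong S^{k-1}$ ``will require new ideas'' and that the relevant inequality is ``where I expect the bulk of the work \ldots\ to lie''. That is an honest assessment, but it means the proposal is not a proof of the conjecture.

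It is worth noting that your plan tracks closely the paper's own informal discussion of how one might attack the conjecture. The paper agrees that row configurations, hyperplane arguments, and the topological tools generalise, and it identifies two sticking points: the thickening step (in dimension~$3$ a clean non-pure complex can only fail purity via free $(k-1)$-faces, but for $k\geq 4$ there are more low-dimensional substructures, forcing a longer cleaning catalogue) and the link analysis behind Lemmas~\ref{lem:basiclinks} and~\ref{lem:thickdegrees}, which in dimension~$3$ exploits that links are $2$-dimensional. You place the principal difficulty instead at the counting step (the analogue of Lemma~\ref{lem:goodscshell}), replacing Euler's formula by Dehn--Sommerville relations and upgrading Lemma~\ref{lem:scdegbound} to a statement about flag $(k-2)$-spheres. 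Both diagnoses are reasonable and not mutually exclusive; in fact the paper's concern about higher-dimensional links feeds directly into your concern about bounding row competitors at a vertex. One small correction: your appeal to ``Claim~\ref{claim:hyperplanesdonotcross}'' in the row-interaction discussion points to the wrong place --- that claim is about hyperplanes in the cut-vertex argument, not about competing rows at a shared endpoint; the relevant argument is the one surrounding Lemma~\ref{lem:scdegbound} and Figure~\ref{fig:twoSCcontradiction}.
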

It is possible that our proof method can be generalised to confirm this conjecture. In particular, the definition of a row configuration can be generalised to correspond to rows of $k$-cubes, arguments using hyperplanes still apply, and the topological tools used have higher dimensional alternatives. On the other hand, one key sticking point is in generalising the thickening procedure: while we used the fact that if a clean 3-dimensional cube complex is not pure then this must be due to free faces, in $k$ dimensions there are more potential low-dimensional substructures to consider. It might then be necessary to expand the collection of cleaning operations used. In addition, we take advantage of 2-dimensional links in Section~\ref{sec:links} as well as results such as Lemma~\ref{lem:thickdegrees} where we draw conclusions from specific low-degree configurations. Higher dimensional analogues to these results are likely to be quite a lot more complicated.

Our motivation in this paper was to obtain a direct generalisation of Theorem~\ref{thm:disk} in $3$-dimensional space: that \cato\ cubulations of $3$-dimensional balls are reconstructible from their boundary distances.
In our attempt we ended up needing to show this statement for a wider class, \cato\ cube complexes with Euclidean embeddings.
Revisiting the $1$ and $2$-dimensional case with this in mind and with the appropriate notion of boundary, we showed in Theorem~\ref{thm:dim12} that Theorem~\ref{thm:disk} can be extended to general finite \cato\ cube complexes of top dimension $1$ and $2$. 
This prompts the natural question: is an embedding in a Euclidean space of the same dimension as the top dimension of the complex required in general?
Of course, such an embedding is not always guaranteed: take for instance a single $2$-cube with three $3$-cubes glued onto it.
We believe that such complexes are still reconstructible.
\begin{conjecture}\label{conj:higherdimgeneral}
For any $k\geq 3$, any finite \cato\ cube complex can be reconstructed up to combinatorial type from its boundary distances.
\end{conjecture}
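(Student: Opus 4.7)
The approach would be to combine the two main generalizations achieved separately in Theorems~\ref{thm:main} and~\ref{thm:dim12}: handling arbitrary top dimension $k\geq 3$ (as in Theorem~\ref{thm:main}) while dropping the embedding hypothesis (as in Theorem~\ref{thm:dim12}). The overall strategy would follow the induction-on-boundary-size template used throughout the paper, employing cleaning operations followed by identification and removal of a row of $k$-cubes on the (combinatorial) boundary.

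First, I would extend the cleaning operations of Section~\ref{sec:cleaning}. The substructures would be cut-vertices (handled verbatim using Lemma~\ref{lem:wedge}) and, for each $2\leq j\leq k$, vertices whose entire local structure lies in a unique $j$-cube with no higher-dimensional incidence (generalizing corners of cubes/faces). As in the 3D case, removal of such cells should preserve contractibility via an explicit deformation retraction, preserve the flag condition on links (since we remove only simplices not lying in higher-dimensional simplices), and reduce the combinatorial boundary by at least one vertex. Distance recovery across each removal would continue to rely on hyperplane convexity (Theorem~\ref{thm:wise}) and Lemma~\ref{lem:adjacentverticesandhyperplanes}, both of which are purely intrinsic and require no embedding.

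Second, I would generalize row configurations. A row of $k$-cubes of length $m$ would be a tuple $(C_1,\dotsc,C_m)$ of $k$-cubes glued along opposite $(k{-}1)$-faces, and it would be on the combinatorial boundary if there is a spine path $p_0\dotsb p_m$ whose vertex degrees in $X$ match those forced by a boundary-attached stack of $k$-cubes. The analogue of Lemma~\ref{lem:scrowcorrespondance} should follow by repeatedly invoking a higher-dimensional analogue of Lemma~\ref{lem:basiclinks}: on a clean complex, a partial local degree configuration forces the adjacent cells into a unique higher-dimensional structure because the links are flag simplicial spheres. Once a row is identified, distance recovery after removal proceeds exactly as in Lemma~\ref{lem:recoverdistances}.

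The main obstacle lies in the existence of a row on the boundary. In dimension $3$ with an embedding, the paper uses the thickening construction of Section~\ref{sec:thethickening} to produce a complex $\thick{X}$ homeomorphic to $B^3$ with $\partial\thick{X}\cong S^2$, then applies an Euler-characteristic argument (Lemma~\ref{lem:goodscshell}) to force a low-degree vertex that seeds a good configuration. To carry this out for general $k$ without an embedding hypothesis, one would need: (i) an \emph{intrinsic} thickening procedure that attaches $k$-cubes onto free cells of \emph{every} codimension (not just free $(k{-}1)$-faces), producing a pure contractible $k$-dimensional complex whose combinatorial boundary is homeomorphic to $S^{k-1}$ and stands in bijective correspondence with $\partial X$ under a projection $\pi$ à la Lemma~\ref{lem:thicksctooriginalsc}; and (ii) a higher-dimensional counting argument over this sphere. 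For (i), one must specify an iterative gluing order on free cells and verify that the resulting complex has flag links and the correct topology; the current construction exploits the planar cyclic ordering of boundary faces around each boundary edge in an essential way, and an intrinsic replacement would need a canonical local model for gluing in codimension $\geq 2$, which I expect to be the true sticking point. For (ii), Euler's formula no longer yields a single linear relation forcing low-degree vertices; one would instead need a discrete Morse theoretic argument or an $f$-vector inequality on $\partial\thick{X}$ that produces a vertex whose link has a structure simple enough to be recognised combinatorially and to initiate a row configuration, after which the argument of Lemma~\ref{lem:goodscshell} can be adapted via the higher-dimensional projection lemma.
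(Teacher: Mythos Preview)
This statement is a \emph{conjecture} in the paper, not a theorem: the paper gives no proof of it, and explicitly lists it as open in Section~\ref{sec:conclusion}. So there is no ``paper's own proof'' to compare against. Your proposal is likewise not a proof but a strategy sketch, and you are candid about this (``which I expect to be the true sticking point'').

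As a strategy, your outline mirrors the paper's own discussion of the obstacles almost exactly. The paper says, right after stating the conjecture, that the thickening step is compromised without an embedding because the gluing maps between new cubes depend on the cyclic ordering of boundary faces around each edge, and that even if one could define a thickening it need not be homeomorphic to a ball if $X$ itself does not embed in $\mathbb{R}^k$. You identify precisely these two issues in your point (i). Your point (ii), that Euler's formula no longer gives a single forcing relation on $S^{k-1}$ for $k\geq 4$, is the additional obstruction the paper flags separately after Conjecture~\ref{conj:higherdim}.

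The genuine gap, then, is that neither you nor the paper knows how to carry out (i) or (ii). Your sketch of the cleaning and row-removal steps is plausible and in line with how the paper's machinery is set up (hyperplane convexity and Lemma~\ref{lem:adjacentverticesandhyperplanes} are indeed embedding-free), but the existence argument is where the actual content lies, and nothing in your proposal advances it beyond naming the difficulty. In short: your proposal correctly diagnoses why the conjecture is open and aligns with the paper's own assessment, but it does not resolve it.
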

For $k=3$, a key sticking point of an approach similar to the proof of Theorem~\ref{thm:main} is in the `thickening' step: not only is the definition of a `thickening' of a complex compromised when an embedding is not provided (the gluing maps between new cubes depends on the cyclic ordering of faces around edges in the embedding), but as long as that a `thickening' contains the original complex as a subcomplex, it will not be homeomorphic to an euclidean ball if the original complex was not embeddable in euclidean space, a crucial requirement in the existence argument.

Another direction is to consider the situation in the world of simplicial complexes.
\begin{question}
Under what conditions is a finite $k$-dimensional simplicial complex reconstructible up to combinatorial type from its boundary distances?
\end{question}
Reconstructibility for simplicial complexes is less approachable as at first sight there does not seem to be any obvious `convex' substructure to remove. The case of $2$-dimensional complexes embeddable in $\mathbb{R}^2$ has been dealt with in \cite{john2D}, where it is shown that all internal vertices having degree at least $6$ is a sufficient condition.

\bibliographystyle{scott}
\bibliography{cubulations}

\end{document}